\spnewtheorem*{notations}{Notation.}{\itshape}{\rmfamily}
\newtheorem{assumption}[theorem]{Assumption}
\numberwithin{equation}{section}
\def\Cov{\mathop{\rm Cov}\nolimits}
\def\d{\mathrm{d}}
\def\<{\langle}
\def\>{\rangle}
\def\b{\beta}
\def\e{\e}
\def\s{\sigma}
\def\vt{\vartheta}
\def\N{{\Bbb N}}  
\def\P{{\Bbb P}}  
\def\E{{\Bbb E}}  
\def\Va{{\Bbb V}}  
\let\cal=\mathcal
\def\OO{{\cal O}}
\def\PP{{\cal P}}
\def\RR{{\cal R}}
\def\ZZ{{\cal Z}}
 \def \b {{\beta}}
 \def \e {{\varepsilon}}
 \def \s {{\sigma}}
 \def \d {{\delta}}
 \def \ba {\begin{array}}
 \def \ea {\end{array}}
 \newcommand{\be}{\begin{equation}}
 \newcommand{\ee}{\end{equation}}
\newcommand{\bea}{\begin{eqnarray}}
 \newcommand{\eea}{\end{eqnarray}}
\def\TH(#1){\label{#1}}\def\thv(#1){\ref{#1}}
\def\Eq(#1){\label{#1}}\def\eqv(#1){(\ref{#1})}
 \def \1{\mathbbm{1}}
\def\wt {\widetilde}
\def\wb{\bar}
\def \so {\text{\small{o}}(1)}
\def \po {\text{\small{o}}}
\begin{document}

\title{Stochastic Modelling of T-Cell-Activation}

\author{Hannah Mayer \and Anton Bovier}
 \institute{H. Mayer \at Institut f\"ur Angewandte Mathematik,
Rheinische Friedrich-Wilhelms-Universit\"at, Endenicher Allee 60, 53115 Bonn, Germany\\
\email{hannah.mayer@uni-bonn.de} 
\and
A. Bovier \at
Institut f\"ur Angewandte Mathematik,
Rheinische Friedrich-Wilhelms-Universit\"at, Endenicher Allee 60, 53115 Bonn, Germany\\
\email{bovier@uni-bonn.de}}

\date{Received: date / Accepted: date}
 \maketitle

\begin{abstract}
We investigate a special part of the human immune system, namely the activation of T-Cells, using stochastic tools, especially sharp large deviation results.
T-Cells have to distinguish reliably between foreign and self peptides which are both presented to them by antigen presenting cells. Our work is based on a model studied by Zint, Baake, and
den Hollander in \cite{BZH}, and originally proposed by van den Berg, Rand, and Burroughs in \cite{BRB}. We are able to dispense with some restrictive distribution assumptions that were used previously, i.e. we establish a higher robustness of the model.
A central issue is the analysis of two new perspectives to the scenario (two different quenched systems) in detail. 
This means that we do not only analyse the total probability of a T-Cell activation (the annealed case) but 
also consider the probability of an activation of one certain  T-Cell type  and the probability of a T-Cell activation by a certain antigen presenting cell (the quenched cases). Finally, we see analytically that the probability of T-Cell activation increases with the number of presented foreign peptides in all three cases. 
\keywords{Immune system; T-Cell activation; large deviations; cross-reactivity}
\subclass{60F05, 60F10, 60F17, 60K37, 62P10, 92C37} 
\end{abstract}

\section{Introduction and model setting}

In the present paper we analyze  a stochastic model for T-Cell activation 
that was introduced by van den Berg et al. \cite{BRB,VR}
and later developed and studied by Zint et  al. \cite{BZH}.
While we allow for a  slightly more general model setting, 
the  main new contribution is the analysis  of some different experimental settings that correspond mathematically to various conditional probabilities.
This is explained in detail below.

\subsection{Biological perspective}

Let us first of all say that the model we consider   concerns only  
the mechanism of T-Cell activation. There are many other processes involved in the immune response
 that are not considered here at all.  \emph{T-Cells} have
  the task to recognize foreign antigens against a noisy background of the 
body's own antigens. Any substance which is able to elicit an immune response 
is called \emph{antigen} (from the
term \emph{anti}body \emph{gen}erating). T-Cells do not react with free 
antigens present in the body, but only with antigens presented 
by \emph{antigen presenting cells} (APCs). APCs collect
material in the body, internalize it and split it up in peptides which are 
afterwards presented on the surface of the APC. 

During a so called \emph{immunological synapse}, a bond between a T-Cell and an APC, the T-Cell
scans the presented mixture of peptides using its receptors. 
There is only one receptor type on each T-Cell, but many different peptide 
types on each APC. The task of a T-Cell is to decide on the basis of signals received during an immunological synapse whether foreign antigens are 
present in the body, and to trigger an immune reaction when indicated (in reality, this
 involves a complex interplay with  other parts of the immune system which we do not deal with here). 
This task is made difficult by the following fact: According to Mason \cite{MA} and Arstilla
 et al. \cite{ACB}, there exist about 
$10^{13}$ peptide types  that should be recognized but only $10^7$ different receptor types. 
This implies that a fully 
 specific recognition, i.e. that each receptor type recognizes exactly one 
specific antigen, is impossible. Therefore, a certain degree of 
\emph{cross-reactivity} has to be assumed. The presence of auto-immune diseases, allergies and heavy diseases caused e.g. by viruses and bacteria shows that the immune system faces a very difficult task.

\subsection{The model}

In  \cite{VR} and \cite{BZH} a mathematical model was presented that allows to 
interpret the functioning of  T-Cells as a statistical test problem.  
We briefly describe this model following \cite{BZH}.

\paragraph{Characterization of the APC.}

An APC is  characterized by the types of the presented peptides and the 
number of copies of each peptide type. Thus, each APC can be 
represented by a set of parameters $z_j$  representing 
 the numbers of peptides of type $j$. Peptide types are sometimes 
distinguished as \emph{constituent} (i.e. being present in all cells) 
and \emph{variable} (i.e. being present only in some cells), but this 
distinction  plays no major r\^ole in the present paper (see, however \cite{BL}).
The index 
 $j$ ranges over all peptide types present on this APC. 
We denote the number of foreign peptides present on the APC by $z_f$ and allow,
for simplicity,  for only one type of foreign peptides on one APC. 
Note that the peptides  on the APC are collected by 
a given APC and  thus are a \emph{random} sample   from the total pool of peptides 
present in the body.

\paragraph{Characterization of the T-Cell.}

Each T-Cell possesses one receptor type on its surface. The T-Cell is characterized by the interaction of its receptors with the different peptide types. To each receptor type 
$i$ corresponds a set of association rates, $a_{ij}$, and dissociation rates, 
$r_{ij}$. Here $i$ ranges over all receptor types and $j$ over all peptide
 types. We use the following assumption in line with the approach in \cite{BZH} and \cite{BRB}.

\begin{assumption}\label{Receptors}
There is an abundance of receptors on each T-Cell in the region of interaction with the APC such that each 
released peptide is immediately bound by a receptor again. Thus, association can be assumed as instantaneous and 
 the association rates play no r\^ole.
\end{assumption}
\begin{remark}
We  see later  that we have to work with random stimulation rates to investigate the event of T-Cell 
activation. It will become clear that the qualitative behaviour does not rely on their exact distribution, 
and thus assumption \ref{Receptors} is not too restrictive.
\end{remark}
Under  Assumption \ref{Receptors}, a T-Cell is fully characterized by the set $r_{ij}$, where $i$ refers to the
particular T-Cell. We will see later that we are interested in the duration of peptide-receptor-complexes. 
These duration for  a complex of type $ij$ are denoted by $t_{ij}$ and depend on the dissociation rates.

\paragraph{Activation criteria.}

The interaction of the cells produces a stimulating signal which the T-Cell receives.  This signal results in an activation of the T-Cell, if certain criteria are met. 
The papers  \cite{R,VMC,VL2} and \cite{VL} suggest the following assumption.

\begin{assumption}\label{ActCriteria}\hspace{1cm}\begin{enumerate}
	\item A T-Cell receives a stimulus if a peptide-receptor-complex exists longer than a
	 time $t_*$.
	\item A T-Cell sums up all the stimuli it gets, even those of different receptors.
	\item A T-Cell is activated if the sum of all stimuli exceeds a 
	threshold value $g_{act}$.
\end{enumerate}
\end{assumption}
Note that activation is induced  by stimulation and thus  
the stimulation rates $w_{ij}$ which result from a peptide-receptor-complex of type $ij$ are important. 
The central quantity to analyse  is then the \emph{total stimulation rate} a T-Cell receives. It is denoted by $g_i$ and compounds  all the parameters  mentioned before.

The preliminary description demonstrates that we are concerned with a really high-dimensional problem and model. Merely the number of parameters for the dissociation rates in the model is at least of the order $10^{20}$ ($10^7$ receptor types times $10^{13}$ peptide types).

\paragraph{Stochastic model.}

The large number of parameters in this model makes a specification of all of 
them impossible.  Therefore, van den Berg et al. \cite{BRB} proposed a stochastic model.

A closer look at the system reveals that we must deal  with different 
types and sources of randomness.  This is crucial for the interpretation of 
the probabilities in specific experimental setups.
The dissociation rates are assumed to be random because they are unknown in detail. The presentation profile, i.e. which number of which peptide type is 
present on an APC, is random because it represents a random sample of the 
peptides present in the body. In fact, this quantities are doubly stochastic 
because we have a random pool of peptides in the body and therefrom a random 
sample is presented on the APC. For the sake of simplicity we do not model this 
double effect here. A further reason to assume randomness is that we consider a random encounter of two randomly meeting cells.  
Let us now specify the stochastic version of the model precisely.

Let $(\Omega, \mathcal F, \mathbb P)$ be a probability space on which we define the following random variables. First, 
we  characterize the presentation pattern on the APC.

\begin{enumerate}[(i)]
	\item We set $n\equiv n_c+n_v+1$, where $n_c$ and $n_v$ denote the number of constitutive and variable peptide types on each APC.  
	\item $N_c$ and $N_v$ denote the number of constitutive and variable 
peptide types in the body.
	\item The sample of constitutive and variable peptides is represented 
by positive random variables $Z_j^c$  and $Z_j^v$, each of them 
representing the number of copies of a certain peptide type; they are 
independent and identically distributed (i.i.d.) in the class of constitutive 
and variable peptides, respectively.
	\item $Z_j^c$ and $Z_j^v$ are bounded, and independent of each other. 
	\item $\ZZ$ is the $\sigma$-algebra generated by $Z_j^c$ with 
$j\in \lbrace 1,\dots,N_c \rbrace$ and $Z_j^v$ with $j\in \lbrace 1,\dots,N_v\rbrace$.
	\item We use a short hand notation for the conditional distribution $\P^{\ZZ}(A)\equiv \mathbb P (A|\ZZ)$ and denote the corresponding conditional 
expectation by $\E^{\ZZ}[\cdot]$.  
\end{enumerate}
It is reasonable to assume that $Z_j^c$ and $Z_j^v$ are bounded because the space on each APC is bounded. 

Next, we come to the characterization of the T-Cell.
\begin{enumerate}[(i)]
	\item The total number of the receptor types is $N_1\in \mathbb N$, the one of the peptide types is $N_2$.
	\item The index $i$ denotes the receptor type and $j$ denotes the peptide type.
	\item The dissociation rates of a complex of type $ij$ are positive, i.i.d. random variables $R_{ij}$ with distribution $\PP$ and expectation value $\E [R_{ij}] =	\overline{\tau}$, $	\overline{\tau} \in \mathbb R_+$.
	\item $\RR$ is the $\sigma$-algebra generated by $R_{ij}$ with $i\in\lbrace 1,\dots,N_1\rbrace$ and $j\in \lbrace 	1,\dots,N_2\rbrace$.
	\item The times a certain peptide-receptor-complex  of type $ij$ exists are positive i.i.d. random variables $T_{ij}$ with conditional 
distribution	$\P^{\RR}(T_{ij}\in A)\equiv  \mathbb P (T_{ij}\in A|\RR)$ and corresponding conditional expectation  $\E^{\RR}[T_{ij}]\equiv  \mathbb E [T_{ij}|\RR] = 1/R_{ij}$.
\end{enumerate}
In this setting $T_{ij}$ are random variables in a random environment, in 
other words they are also doubly stochastic.  They display the individual 
duration of a concrete peptide-receptor-complex of type $ij$. Therefore, the 
expected duration of such a bond should be reciprocally proportional to the
 corresponding dissociation rate $R_{ij}$. According to (iv)  
$\E[T_{ij}]=(\bar{\tau})^{-1}$.
The joint distribution of all dissociation rates is given by the product measure 
$\PP^{N_1N_2}$.
The presented sample is from the biological point of view independent of the dissociation rates and the duration of the peptide-receptor-bonds. Thus, $\ZZ$ and $\RR$ are independent.

The parameters of the previous part can be considered as realizations of these random variables.

\paragraph{The stimulation rates.}

According to Assumption \ref{ActCriteria} a single bond between a receptor $i$ and a presented 
peptide of type $j$ during a synapse results in  a stimulation signal if the binding time, $T_{ij}$, exceeds a certain threshold $t_*$. It is assumed that the relevant signal for the T-Cell is the compound average number of stimuli during a synapse, 
\begin{equation}
	\frac 1t\sum\nolimits_{k=1}^{N_j(t)}\mathds 1_{T_{ij}^{k}>t_*},
\end{equation}
where $N_j(t)$ denotes the total number of bindings with a peptide of type $j$  and $T_{ij}^k$ are the respective binding times. Assuming that the number
of bindings is very large, it is reasonable to 
assume that the relevant signal   a T-Cell is receiving from a given peptide type is given by 
\begin{equation}
W_{ij}\equiv \lim_{t\uparrow \infty}
	\frac 1t\sum\nolimits_{k=1}^{N_j(t)}\mathds 1_{T_{ij}^{k}>t_*}
=\frac{\P^{\RR}(T_{ij}>t_*)}{\E^{\RR}[T_{ij}]} \quad \text{a.s.},
\end {equation}
where the last equality follows from elementary renewal theory. We will 
henceforth consider the random variables $W_{ij}$ as the fundamental characteristics of a peptide-receptor interaction. 

 \begin{notations}
 Let the variable $z_f$ denote the number of presented foreign peptides of one particular type. The expected total number of peptides present at one APC is given by $n_M\equiv n_c\mathbb E [Z_1^c]+n_v\mathbb E [Z_1^v]$. The factor $q_n\equiv (n_M-z_f)n_M^{-1}$ used in the following ensures a proportional displacement of the presented self peptides by the foreign peptides\footnote{One may argue that 
scaling the random variables by a common factor 
 is not the best choice to achieve a constant expectation. E.g., if the $Z_j$ 
were assumed to be binomial random variables, it would be more 
reasonable to change the parameter in an appropriate way. However, 
this appears to have only little affect on the results and we keep following 
Zint et al. at this point.}.
 \end{notations}
We drop the index $i$ in favour of a clear notation since it is fixed for one T-Cell. The discussion above motivates Zint et al. \cite{BZH} to define the 
\emph{total stimulation rate} as follows.
\begin{definition}\label{total stim}
With the  notation introduced above, 
 the total stimulation rate a T-Cell receives is given by
\begin{equation}
G_n(z_f)=q_n\left(\sum_{j=1}^{n_c}Z_j^cW_j+\sum_{j=n_c+1}^{n_c+n_v}Z_j^vW_j\right)+z_fW_f.
\end{equation}
\end{definition}
Note that $W_f$ has the same distribution as the other random variables representing stimulation rates, and is independent of these random variables and $\ZZ$.

\paragraph{Interpretation of the probabilities.}

The central quantity for our investigation is the probability of T-Cell activation. If we consider just one certain encounter of an APC and a T-Cell the ``probability'' of T-Cell activation is either $0$ or $1$. But one single experiment does not give much information on the actual situation because it is possible that this T-Cell performs a mistake. So, we and in fact the immune system take another view to the scenario. We are concerned with investigating the following three different cases:

\begin{enumerate}[1.)]
	\item The annealed case: Here we consider the total probability that any T-Cell is activated by 	any APC, $\mathbb P(G_n(z_f)\geq g_{act})$. This probability can be interpreted as the frequency that an activation occurs at all. If $m$ denotes the number of observed experiments, this is the number of meetings of APCs and T-Cells, then 
	\begin{equation}
	\frac 1m \# \text{ \{activated T-Cells\}} \to \P(G_n(z_f)\geq g_{act}).
	\end{equation}
	\item The case quenched with respect to (w.r.t.) the dissociation rates of the T-Cell: In this case we investigate the conditional probability that a certain T-Cell type is activated by any APC, $\P^{\RR}(G_n(z_f)\geq g_{act})$. Here, one type of T-Cell is examining several peptide samples and the results are averaged. The probability represents the frequency of activations of T-Cells of type $i$ during different experiments. If $m$ denotes the number of meetings of T-Cells of type $i$ and different APCs , then 
	\begin{equation}
	\frac 1m \# \text{\{activations of T-Cells of type $i$\}} \to \P^{\RR}(G_n(z_f)\geq g_{act}).
	\end{equation}
	\item The case quenched w.r.t. the environment presented by an APC: The conditional probability that a certain APC activates any type of T-Cell, $\P^{\ZZ}(G_n(z_f)\geq g_{act})$, is analysed here. If $m$ denotes the number of meetings of a certain APC with different T-Cells, then 
	\begin{equation}
	\frac 1m \# \text{\{activations of  T-Cells by the given APC\}}\to \P^{\ZZ}(G_n(z_f)\geq g_{act}).
	\end{equation}
\end{enumerate}
In \cite{BZH} only Case 1.) is considered. In our view the Cases 2.) and 3.) 
are even more relevant.  The probability in Case 2.) is to be interpreted as the frequency of activation of a given T-Cell in repeated 
synapses with different presenter cells. We shall see that these probabilities
depend strongly on the sensitivity of the given T-Cell to the particular presented foreign peptide.
The main question is which information on $z_f$ is deducible from the 
behaviour of the T-Cells. This information emerges only on the basis of 
repeated experiments and the resulting frequency of activations. This leads to the question whether there exists a threshold value $g_{act}$ such that the presence of invaders is distinguishable from the self-background.

\paragraph{Scaling and asymptotics.}  To obtain sensible analytic results
one needs to consider certain numbers to be \emph{large}. Clearly, the total 
number of  peptide types, $n=n_c+n_v+1$, will be assumed the main large 
parameter. The number of foreign peptides has to be seen in comparison to this number, i.e. we understand that $z_f$ \emph{depends} on $n$, and one wants to 
know how large $z_f$ has to be (as a function of $n$) 
for a reliable detection.  The numbers $n_c$ and $n_v$ can in principle also 
be of different magnitude, e.g. one may think that $n_v\sim n$ and $n_c\sim
n^\b$, $\b<1$. For simplicity, we will consider here only the case $n_c\sim n_v\sim n$. 

\section{Results}\label{results}

\subsection{Expectation value and variance of the total stimulation rate}

We compute the expectation value and variance of the total stimulation rate as a function of the number of foreign peptides, $z_f$, in  Cases 1.) , 2.), and 3.). It is easy to calculate these quantities and they allow a first insight into why and in which range a foreign self distinction may work. We assume the existence of the involved first and second moments.

\paragraph{Case 1.)}
According to the following lemma an  increasing number of foreign peptides does not change the expectation value but the variance of the total stimulation rate.

\begin{lemma}\label{varexp}
It holds that $\mathbb E [G_n(0)]=\mathbb E[G_n(z_f)]$,  and that  
\begin{equation}\label{varexp.1001}
\textstyle \mathbb V[G_n(z_f)]-\mathbb V[G_n(0)]=\left( \mathbb V [W_1] +\frac{\mathbb V [G_n(0)]}{n_M^2}\right)z_f\left(z_f-\frac{2n_M\mathbb V[G_n(0)]}{\mathbb V [W_1]n_M^2+ \mathbb [G_n(0)]}\right).
\end{equation} 
\end{lemma}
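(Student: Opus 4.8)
The plan is to isolate the foreign contribution and exploit the independence built into the model. Set
$$S \;:=\; \sum_{j=1}^{n_c} Z_j^c W_j \;+\; \sum_{j=n_c+1}^{n_c+n_v} Z_j^v W_j ,$$
so that $G_n(0)=S$ (because $q_n=1$ when $z_f=0$) while $G_n(z_f)=q_n S+z_f W_f$. Since $W_f$ is independent of $(\ZZ,(W_j)_j)$, it is in particular independent of $S$, and this is what makes both the mean and the variance of $G_n(z_f)$ split into an $S$-part and a $W_f$-part.

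For the first assertion I would compute $\mathbb E[S]$. The $Z$'s are $\ZZ$-measurable, the $W_j$ are $\RR$-measurable, and $\ZZ\perp\RR$, so $\mathbb E[Z_j^c W_j]=\mathbb E[Z_1^c]\,\mathbb E[W_1]$ and likewise on the variable block (all stimulation rates being identically distributed), whence $\mathbb E[S]=\bigl(n_c\mathbb E[Z_1^c]+n_v\mathbb E[Z_1^v]\bigr)\mathbb E[W_1]=n_M\,\mathbb E[W_1]$. Therefore
$$\mathbb E[G_n(z_f)] \;=\; q_n\,n_M\,\mathbb E[W_1] + z_f\,\mathbb E[W_1] \;=\; (n_M-z_f+z_f)\,\mathbb E[W_1] \;=\; n_M\,\mathbb E[W_1] \;=\; \mathbb E[G_n(0)],$$
which is precisely the invariance for which the normalizing factor $q_n$ was introduced.

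For the variance I would use independence of $S$ and $W_f$ to get $\mathbb V[G_n(z_f)]=q_n^2\,\mathbb V[S]+z_f^2\,\mathbb V[W_f]=q_n^2\,\mathbb V[G_n(0)]+z_f^2\,\mathbb V[W_1]$, using $\mathbb V[S]=\mathbb V[G_n(0)]$ and $\mathbb V[W_f]=\mathbb V[W_1]$. Subtracting $\mathbb V[G_n(0)]$ and inserting $q_n^2-1=\bigl((n_M-z_f)^2-n_M^2\bigr)/n_M^2=z_f(z_f-2n_M)/n_M^2$ gives
$$\mathbb V[G_n(z_f)]-\mathbb V[G_n(0)] \;=\; \frac{z_f(z_f-2n_M)}{n_M^2}\,\mathbb V[G_n(0)] \;+\; z_f^2\,\mathbb V[W_1].$$
It then only remains to reorganize the right-hand side: pull $z_f$ out of both summands, collect the coefficient of the quadratic term as $\mathbb V[W_1]+\mathbb V[G_n(0)]/n_M^2=\bigl(\mathbb V[W_1]\,n_M^2+\mathbb V[G_n(0)]\bigr)/n_M^2$, and read off the remaining linear factor; the $n_M^2$ then cancels against this coefficient and one lands on the product form claimed in \eqref{varexp.1001}.

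I do not expect a genuine obstacle here: the content is one independence argument followed by elementary algebra. The two points that deserve a line of justification are the independence claims ($W_f$ independent of $(\ZZ,(W_j)_j)$, hence of $S$, and $\ZZ\perp\RR$ so that the $Z$'s are independent of the $W$'s), and the final factorization, where one should check that the denominator appearing in the linear term of \eqref{varexp.1001} is exactly what is needed to cancel the coefficient $\mathbb V[W_1]+\mathbb V[G_n(0)]/n_M^2$. (There is also a visible misprint in the displayed identity: the last summand in that denominator should read $\mathbb V[G_n(0)]$.)
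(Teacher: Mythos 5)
Your proposal is correct, and since the paper states Lemma~\ref{varexp} without proof (remarking only that the quantities are easy to compute), your argument is essentially the intended one: compute $\mathbb E[S]=n_M\,\mathbb E[W_1]$ via the independence $\ZZ\perp\RR$, exploit the independence of $W_f$ from $S$ to split the variance, use the identity $q_n^2-1=z_f(z_f-2n_M)/n_M^2$, and factor. You also correctly identify the typographical slip in the displayed identity (the last summand in the denominator should read $\mathbb V[G_n(0)]$).
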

We can see that the expectation value of $G_n(z_f)$ is independent of $z_f$. The variance depends quadratically on $z_f$, the difference  of the variances as a function of $z_f$ is a parabola with roots
$2n_M\mathbb V[G_n(0)]/(\mathbb V [W_1] n_M^2+\mathbb V [G_n(0)])$ and $ 0$.  
The function decreases first but for $z_f$ large enough it is monotonously increasing and positive. The qualitative behaviour shows up in this general setup, the exact shape depends only on the first and second moments of the involved random variables and not on their exact distribution. In the setting of an increasing variance and a constant expectation value the probability to exceed a threshold value larger than the expectation value increases. Thus, a T-Cell activation may become more likely for a larger value of $z_f$. The increasing variance may allow to have the threshold value $g_{act}$ on a level such that permanent reactions are avoided but an activation becomes more probable. We see here already that a detection can only be  possible if $z_f$ is large enough.

\paragraph{Case 2.)}
Because we consider conditional probabilities and expectations here, these quantities can coincide at most almost surely.  We consider now the difference of the conditional expectations as a function of $z_f$, too. In this case the expectation value and the variance depend on $z_f$.
\begin{lemma}\label{varexp2}
It holds that $\E^{\RR}[G_n(z_f)]-\E^{\RR}[G_n(0)] = z_f(W_f-\E [W_1])$ almost surely,
 and that
\begin{align}\label{varexp2001}
	\Va^{\RR}[G_n(z_f)]-\Va^{\RR}[G_n(0)] =&\left(\Va^{\RR}[W_1]+\frac{\Va^{\RR}[G_n(0)]}{n_M^2}\right)z_f\left(z_f-\frac{2n_M\Va^{\RR}[G_n(0)]}{\Va^{\RR}[W_1]n_M^2+ \Va^{\RR}[G_n(0)]}\right)\nonumber \\
	=& \frac{\Va^{\RR}[G_n(0)]}{n_M^2}z_f\left(z_f-2n_M\right),
\end{align}
where $\Va^{\RR}[X]:=\E^{\RR}[(X-\E^{\RR}[X])^2]$.
\end{lemma}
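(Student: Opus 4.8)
The plan is to redo the moment computations behind Lemma~\ref{varexp}, but conditionally on $\RR$; the only new input is careful bookkeeping of which quantities become deterministic once $\RR$ is frozen. The key observation is that the stimulation rates are $\RR$-measurable: by the renewal-theoretic identity, $W_{ij}=\P^{\RR}(T_{ij}>t_*)/\E^{\RR}[T_{ij}]=R_{ij}\,\P^{\RR}(T_{ij}>t_*)$ is a deterministic function of $R_{ij}$. In particular, conditionally on $\RR$ the foreign stimulation rate $W_f$ equals a constant almost surely, so $\E^{\RR}[W_f]=W_f$ and $\Va^{\RR}[W_f]=0$. On the other hand $\ZZ$ is independent of $\RR$, and the self peptides displayed at a synapse form a representative random sample, so conditioning on the T-cell's dissociation rates does not alter the statistics of its encounter with the self background: writing $S:=\sum_{j=1}^{n_c}Z_j^cW_j+\sum_{j=n_c+1}^{n_c+n_v}Z_j^vW_j$, one gets a.s. that $\E^{\RR}[S]=\E[G_n(0)]=n_M\E[W_1]$ (the last equality being the computation already carried out for the first assertion of Lemma~\ref{varexp}) and $\Va^{\RR}[S]=\Va^{\RR}[G_n(0)]$ (trivial, as $G_n(0)=S$).

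Granting this, both identities fall out. Since $n_M$ does not depend on $z_f$, one has $G_n(z_f)=q_nS+z_fW_f$ with $q_n=1-z_f/n_M$, and $G_n(0)=S$. Taking $\E^{\RR}$ and using $\E^{\RR}[W_f]=W_f$,
\[
\E^{\RR}[G_n(z_f)]-\E^{\RR}[G_n(0)]=(q_n-1)\,\E^{\RR}[G_n(0)]+z_fW_f=-\frac{z_f}{n_M}\,n_M\E[W_1]+z_fW_f=z_f\bigl(W_f-\E[W_1]\bigr).
\]
For the variance, $W_f$ being conditionally constant kills both the cross term and the $z_f^2\Va^{\RR}[W_f]$ term, so $\Va^{\RR}[G_n(z_f)]=q_n^2\,\Va^{\RR}[S]=q_n^2\,\Va^{\RR}[G_n(0)]$, and hence $\Va^{\RR}[G_n(z_f)]-\Va^{\RR}[G_n(0)]=(q_n^2-1)\Va^{\RR}[G_n(0)]$. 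As $q_n^2-1=\bigl((n_M-z_f)^2-n_M^2\bigr)/n_M^2=z_f(z_f-2n_M)/n_M^2$, this is precisely the second displayed line of \eqref{varexp2001}. The first line is what one obtains by \emph{not} discarding $\Va^{\RR}[W_f]$ immediately: it is formula \eqref{varexp.1001} verbatim with every $\Va$ replaced by $\Va^{\RR}$ (the ``$\Va^{\RR}[W_1]$'' appearing there standing for the foreign peptide's conditional variance, exactly as ``$\Va[W_1]$'' stands for $\Va[W_f]$ in \eqref{varexp.1001}), and it collapses to the second line on inserting $\Va^{\RR}[W_1]=0$, the rearrangement being the same routine algebra as in the proof of Lemma~\ref{varexp}.

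The one genuinely delicate point is the conditioning bookkeeping of the first paragraph: one must justify carefully that freezing $\RR$ turns $W_f$ into an honest constant --- so that it shows up in $\E^{\RR}$ but not in $\Va^{\RR}$, which is exactly why $W_f$ persists in the expectation identity whereas $\Va^{\RR}[W_1]=0$ in the variance identity --- while leaving the conditional first and second moments of the self-peptide sum $S$ equal to their unconditional values $n_M\E[W_1]$ and $\Va^{\RR}[G_n(0)]$. Everything after that is just $q_n=1-z_f/n_M$ together with the elementary behaviour of conditional variance, exactly as in Case~1.
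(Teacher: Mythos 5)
The variance half of your argument is correct and is the straightforward route: since $W_f$ is $\RR$-measurable, $\Va^\RR[W_f]=0$, hence $\Va^\RR[G_n(z_f)]=q_n^2\Va^\RR[G_n(0)]$, and $q_n^2-1=z_f(z_f-2n_M)/n_M^2$ gives the second displayed line, with the first line being the same expression with $\Va^\RR[W_1]=0$ reinserted.

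The expectation half has a genuine gap. You assert that because $\ZZ$ is independent of $\RR$, "conditioning on the T-cell's dissociation rates does not alter the statistics of its encounter with the self background," and conclude $\E^\RR[S]=\E[G_n(0)]=n_M\E[W_1]$ almost surely. But $S=\sum_{j\le n_c}Z_j^cW_j+\sum_{j>n_c}Z_j^vW_j$ is \emph{not} independent of $\RR$: the stimulation rates $W_j$ appearing in it are $\RR$-measurable --- the very fact you invoke in the same paragraph to get $\E^\RR[W_f]=W_f$ and $\Va^\RR[W_f]=0$. Conditioning on $\RR$ therefore freezes the $W_j$, and the correct computation is
\[
\E^\RR[S]=\E[Z_1^c]\sum_{j=1}^{n_c}W_j+\E[Z_1^v]\sum_{j=n_c+1}^{n_c+n_v}W_j,
\]
a non-degenerate random variable, whereas $n_M\E[W_1]$ is a deterministic constant; these agree only in expectation (and approximately, by the law of large numbers, as $n\to\infty$). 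Consequently the chain $\E^\RR[S]=\E[G_n(0)]$ fails almost surely, and what the direct calculation actually yields is $\E^\RR[G_n(z_f)]-\E^\RR[G_n(0)]=z_f\bigl(W_f-n_M^{-1}\E^\RR[S]\bigr)$. To arrive at the stated $z_f(W_f-\E[W_1])$ one needs an additional argument identifying (or approximating) $n_M^{-1}\E^\RR[S]$ with $\E[W_1]$; the appeal to independence of $\ZZ$ and $\RR$ does not supply it, so this step of the proof is unjustified as written.
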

Note that $\Va^{\RR}[W_1]=0$ because $W_1$ is measurable w.r.t $\RR$. $\E[W_1]$ is with positive probability not equal to $W_f$  because we do not assume the stimulation rates to be distributed according to a Dirac measure. This is reasonable  because we know from the biological background that the stimulation rates can vary. Hence, we have here an effect on the average stimulation rate a T-Cell receives which depends on the stimulation rate associated to the foreign peptide. The conditional expectation of $G_n(z_f)$ increases with $z_f$ if $W_f>\E [W_1]$.  The effect becomes enlarged for an increasing $z_f$. On the other hand, the variance is decreasing for an increasing value of $z_f$ because $z_f$ will never reach $2n_M$ from the biological point of view.  Thus, the situation is really different here.  The parabolas describing the difference of the variances are random here.

\paragraph{Case 3.)}
Again, the conditional expectations can be at most almost surely equal. Here, the expectation value is almost surely independent of $z_f$ but the variance depends again on $z_f$.
\begin{lemma}\label{varexp3}
It holds that $\E^{\ZZ}[G_n(z_f)]-\E^{\ZZ}[G_n(0)] = 0$ almost surely,
and that
\begin{align}
	&\Va^{\ZZ}[G_n(z_f)]-\Va^{\ZZ}[G_n(0)] 
	= &\left(\Va^{\ZZ}[W_1]+\tfrac{\Va^{\ZZ}[G_n(0)]}{n_M^2}\right)z_f\left(z_f-\frac{2n_M \Va^{\ZZ}[G_n(0)]}{\Va^{\ZZ}[W_1]n_M^2+\Va^{\ZZ}[G_n(0)]}\right),
\end{align}
where $\Va^{\ZZ}[X]:=\E^{\ZZ}[(X-\E^{\ZZ}[X])^2]$.
\end{lemma}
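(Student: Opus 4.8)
The plan is to mirror the proofs of Lemmas \ref{varexp} and \ref{varexp2}, now working everywhere under the conditional expectation $\E^{\ZZ}[\cdot]$. Recall that conditioning on $\ZZ$ freezes the presentation numbers $Z_j^c, Z_j^v$ (they become constants) while the stimulation rates $W_j$ and $W_f$ remain i.i.d.\ random variables (they are $\RR$-measurable and $\RR$ is independent of $\ZZ$), with $\E^{\ZZ}[W_j]=\E[W_1]$ and $\Va^{\ZZ}[W_j]=\Va[W_1]$. First I would prove the expectation statement: writing
\begin{equation}
G_n(z_f)=q_n\Bigl(\sum_{j=1}^{n_c}Z_j^cW_j+\sum_{j=n_c+1}^{n_c+n_v}Z_j^vW_j\Bigr)+z_fW_f,
\end{equation}
and taking $\E^{\ZZ}$, the $Z_j$'s pull out as constants and every $W_j$ as well as $W_f$ contributes its common mean $\E[W_1]$, so $\E^{\ZZ}[G_n(z_f)]=q_n\E[W_1]\bigl(\sum_j Z_j^c+\sum_j Z_j^v\bigr)+z_f\E[W_1]$. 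Using $q_n=(n_M-z_f)/n_M$ and abbreviating $S:=\sum_j Z_j^c+\sum_j Z_j^v$, this is $\E[W_1]\bigl(S - z_f S/n_M + z_f\bigr)$; the $z_f$-dependent part equals $z_f\E[W_1](1 - S/n_M)$, and this term is \emph{not} identically zero. Hence the cleanest route is to observe that $\E^{\ZZ}[G_n(z_f)]-\E^{\ZZ}[G_n(0)] = z_f\E[W_1]\bigl(1 - S/n_M\bigr)$ which vanishes almost surely \emph{iff} one has the identity $S = n_M$ a.s.; since $n_M = n_c\E[Z_1^c]+n_v\E[Z_1^v]$ is a constant but $S$ is genuinely random, this does not hold — so I suspect the stated ``$=0$ a.s.'' is really an abuse meaning ``$=0$ in expectation'', or the intended reading is that $q_n$ is defined with $S$ in place of $n_M$. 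I would therefore flag this and present the identity $\E^{\ZZ}[G_n(z_f)]-\E^{\ZZ}[G_n(0)] = z_f\E[W_1](1-S/n_M)$, noting it has mean zero, as the honest statement; alternatively, if the paper's convention is $n_M\equiv\sum_j Z_j$ (the \emph{realized} total), then the difference is exactly $0$ a.s.\ and the proof is immediate.

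Next I would compute the conditional variance. Conditionally on $\ZZ$ the summands $Z_j^cW_j$, $Z_j^vW_j$, $z_fW_f$ are independent (the $W$'s are i.i.d.\ and independent of $\ZZ$), so the variance is additive:
\begin{equation}
\Va^{\ZZ}[G_n(z_f)] = q_n^2\Bigl(\sum_{j=1}^{n_c}(Z_j^c)^2\Va[W_1]+\sum_{j=n_c+1}^{n_c+n_v}(Z_j^v)^2\Va[W_1]\Bigr) + z_f^2\Va[W_1].
\end{equation}
Writing $Q:=\sum_j (Z_j^c)^2+\sum_j (Z_j^v)^2$ (an $\ZZ$-measurable constant) this is $\Va[W_1]\bigl(q_n^2 Q + z_f^2\bigr)$, and in particular $\Va^{\ZZ}[G_n(0)] = \Va[W_1]\,Q$ since $q_0=1$. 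Therefore
\begin{equation}
\Va^{\ZZ}[G_n(z_f)]-\Va^{\ZZ}[G_n(0)] = \Va[W_1]\bigl((q_n^2-1)Q + z_f^2\bigr).
\end{equation}
Now substitute $q_n^2-1 = \bigl((n_M-z_f)^2-n_M^2\bigr)/n_M^2 = (z_f^2 - 2n_M z_f)/n_M^2$, so the bracket becomes $Q(z_f^2-2n_M z_f)/n_M^2 + z_f^2 = z_f\bigl[(Q/n_M^2 + 1)z_f - 2Q/n_M\bigr]$. Multiplying through by $\Va[W_1]$ and using $\Va^{\ZZ}[G_n(0)]=\Va[W_1]Q$, i.e.\ $Q = \Va^{\ZZ}[G_n(0)]/\Va[W_1]$, rewrites the prefactor $\Va[W_1](Q/n_M^2+1)$ as $\Va^{\ZZ}[G_n(0)]/n_M^2 + \Va[W_1] = \Va^{\ZZ}[W_1] + \Va^{\ZZ}[G_n(0)]/n_M^2$ (using $\Va^{\ZZ}[W_1]=\Va[W_1]$), and the root $2Q/n_M$ as $2n_M\Va[W_1]Q/(\Va[W_1]n_M^2) = 2n_M\Va^{\ZZ}[G_n(0)]/(\Va^{\ZZ}[W_1]n_M^2+\Va^{\ZZ}[G_n(0)])$ after clearing; this is exactly the claimed formula. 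The only real bookkeeping point is to keep track that $\Va^{\ZZ}[W_1]$ here is a genuine positive quantity (unlike $\Va^{\RR}[W_1]=0$ in Lemma \ref{varexp2}), which is why Case 3.)\ does not collapse to a deterministic parabola the way Case 2.)\ does.

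The main obstacle, as indicated, is not the variance computation — that is a routine rearrangement identical in structure to Lemma \ref{varexp} — but rather pinning down the precise meaning of the ``$\E^{\ZZ}$ expectation is independent of $z_f$ almost surely'' claim, since a literal reading conflicts with the randomness of $\sum_j Z_j$ unless $n_M$ denotes the realized (not expected) total peptide count on the given APC. I would resolve this by adopting whichever of the two conventions the paper intends: under $n_M = \sum_j Z_j$ the expectation identity is trivial ($q_n\sum_j Z_j + z_f W_f$ has $\ZZ$-conditional mean $\E[W_1](q_n\sum_j Z_j + z_f) = \E[W_1]\sum_j Z_j$, constant in $z_f$), and the variance argument above goes through verbatim with $Q$ and $S$ both $\ZZ$-measurable; under $n_M = n_c\E[Z_1^c]+n_v\E[Z_1^v]$ the expectation difference is $z_f\E[W_1](1 - S/n_M)$, which is mean-zero but not a.s.\ zero, and I would state it in that corrected form. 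Everything else is immediate from independence of $\RR$ and $\ZZ$, the i.i.d.\ structure of the $W_j$, and conditional variance additivity over independent summands.
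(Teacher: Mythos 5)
Your variance computation is correct and is the algebra one expects: conditioning on $\ZZ$ freezes the $Z_j$'s, variance additivity over the conditionally independent $W_j$'s gives $\Va^{\ZZ}[G_n(z_f)] = \Va[W_1]\bigl(q_n^2 Q + z_f^2\bigr)$ with $Q = \sum_j (Z_j^c)^2 + \sum_j (Z_j^v)^2$, the substitution $q_n^2 - 1 = (z_f^2 - 2 n_M z_f)/n_M^2$ produces the parabola, and re-expressing $Q$ via $\Va^{\ZZ}[G_n(0)] = \Va[W_1]\,Q$ together with $\Va^{\ZZ}[W_1]=\Va[W_1]$ yields exactly the claimed prefactor and root. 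Note that the paper states Lemmas \ref{varexp}--\ref{varexp3} without proof, so your derivation is the only check available; it is sound.

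The more substantive part of your submission is the flag you raise about the expectation identity, and you are right to raise it. Under the paper's stated definition $n_M = n_c\E[Z_1^c] + n_v\E[Z_1^v]$, which is deterministic, one has
\begin{equation}
\E^{\ZZ}[G_n(z_f)] - \E^{\ZZ}[G_n(0)]
= z_f\,\E[W_1]\left(1 - \frac{\sum_{j=1}^{n_c} Z_j^c + \sum_{j=n_c+1}^{n_c+n_v} Z_j^v}{n_M}\right),
\end{equation}
a nondegenerate $\ZZ$-measurable random variable: it has mean zero (since $n_M=\E[S]$) but is not almost surely zero, because the displacement factor $q_n$ corrects the presentation level only on average and not pathwise. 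Your proposed reconciliation --- reading $n_M$ as the \emph{realized} total $\sum_j Z_j$ --- does make both halves of the lemma literally true as stated (and, as you note, the variance argument goes through verbatim with that substitution), but it conflicts with the paper's explicit definition of $n_M$ as an expectation. So the honest formulation is the one you give: either the ``$=0$ almost surely'' should be weakened to ``$=0$ in expectation'', or for Case 3.) the normalization in $q_n$ should be taken relative to the realized total on the given APC. This is a genuine and correct observation, not a gap in your argument.
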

In this case the conditional expectation of the total stimulation rate is only almost surely independent of $z_f$. We also have random parabolas for the difference of the variances in this case.  Note that this scenario is very similar to the one in the annealed case; there is an effect on the level of the variance but not on the level of the expectation.\\

\begin{figure}[t]
	\begin{subfigure}{0.48\textwidth} 
 		 \includegraphics[width=\textwidth]{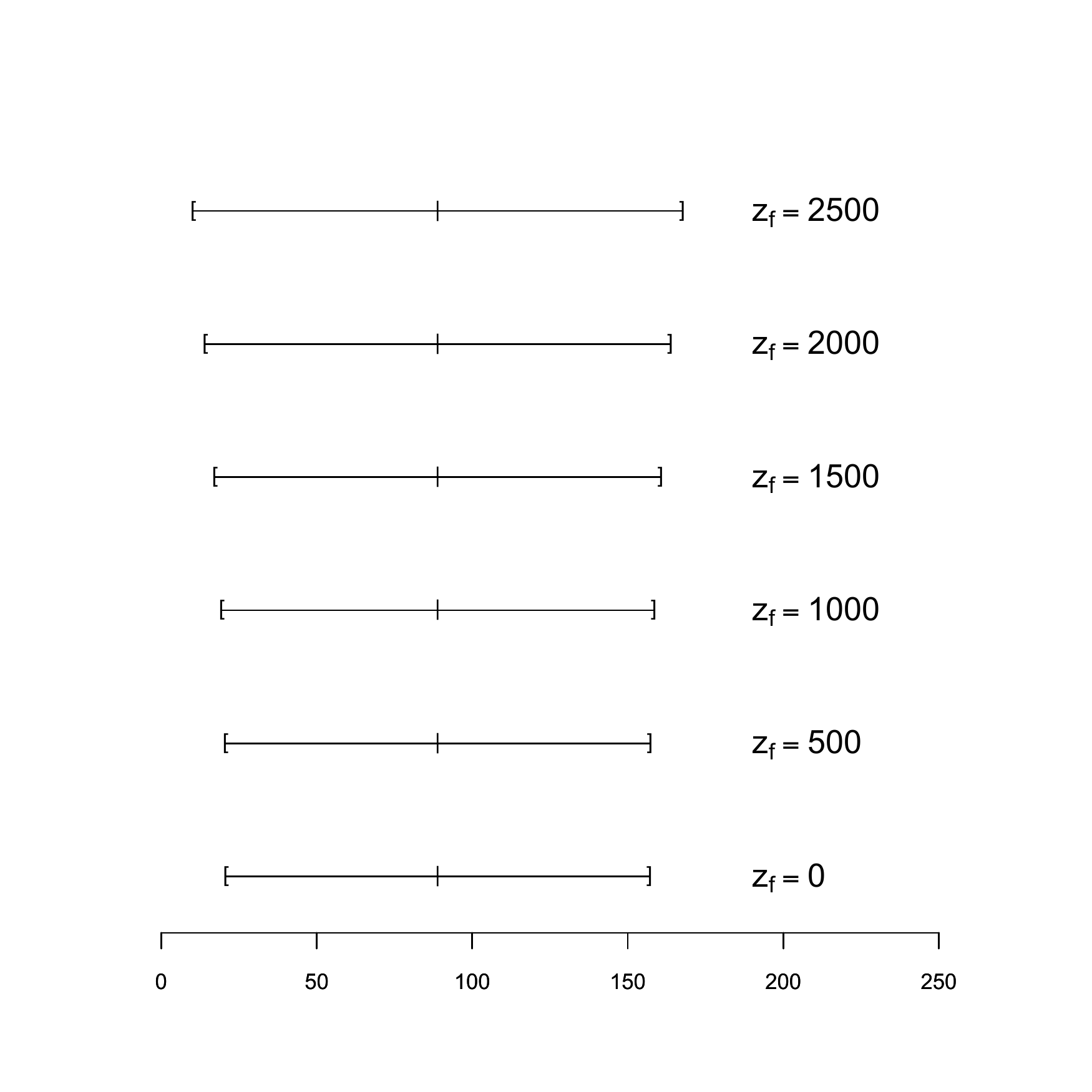}
  		\caption{Case 1.)}
 	 \end{subfigure}
 	 \begin{subfigure}{0.48\textwidth}
		\includegraphics[width=\textwidth]{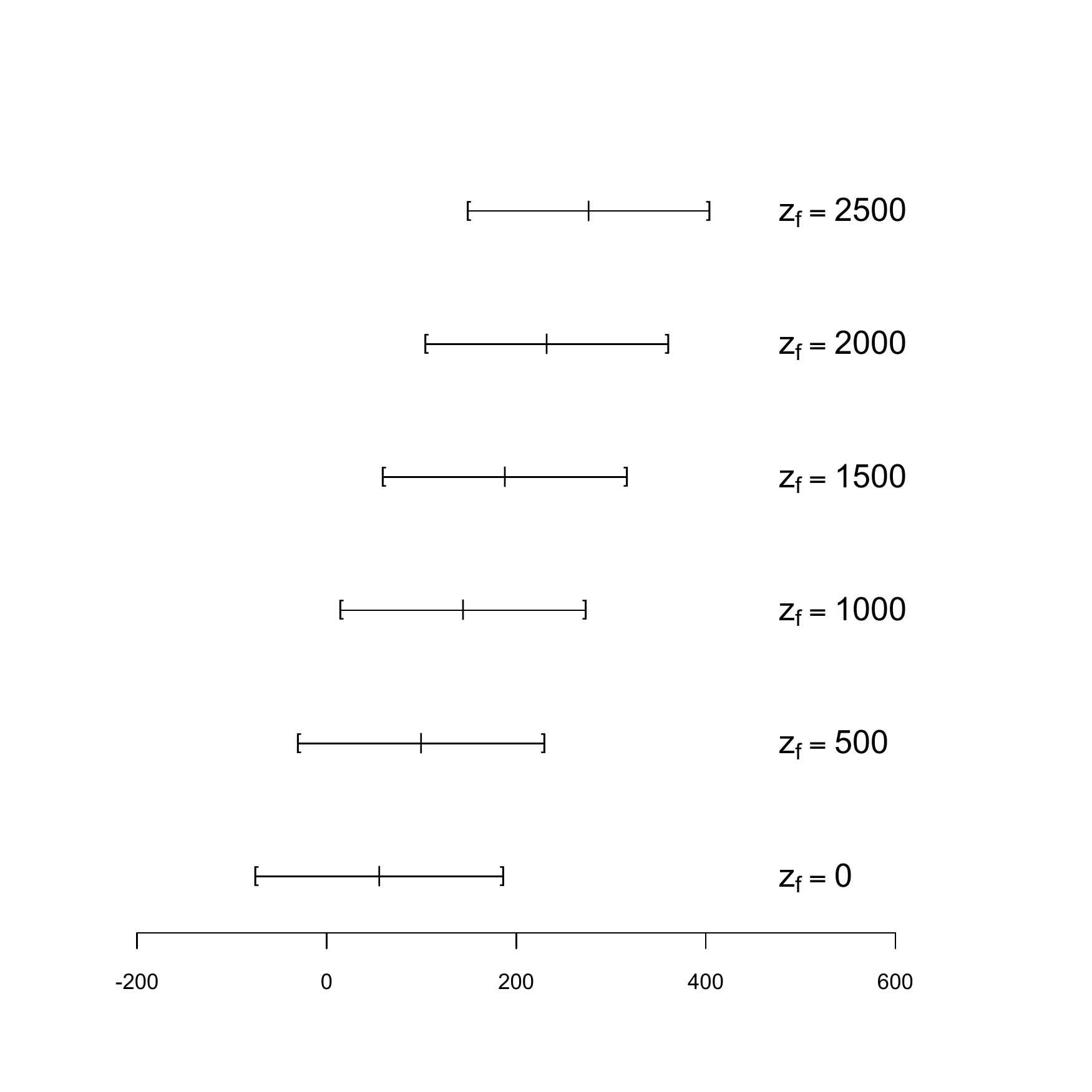}
  		\caption{Case 2.)}
	\end{subfigure} \\
  	\centering 
	\begin{subfigure}{0.48\textwidth}
  		\includegraphics[width=\textwidth]{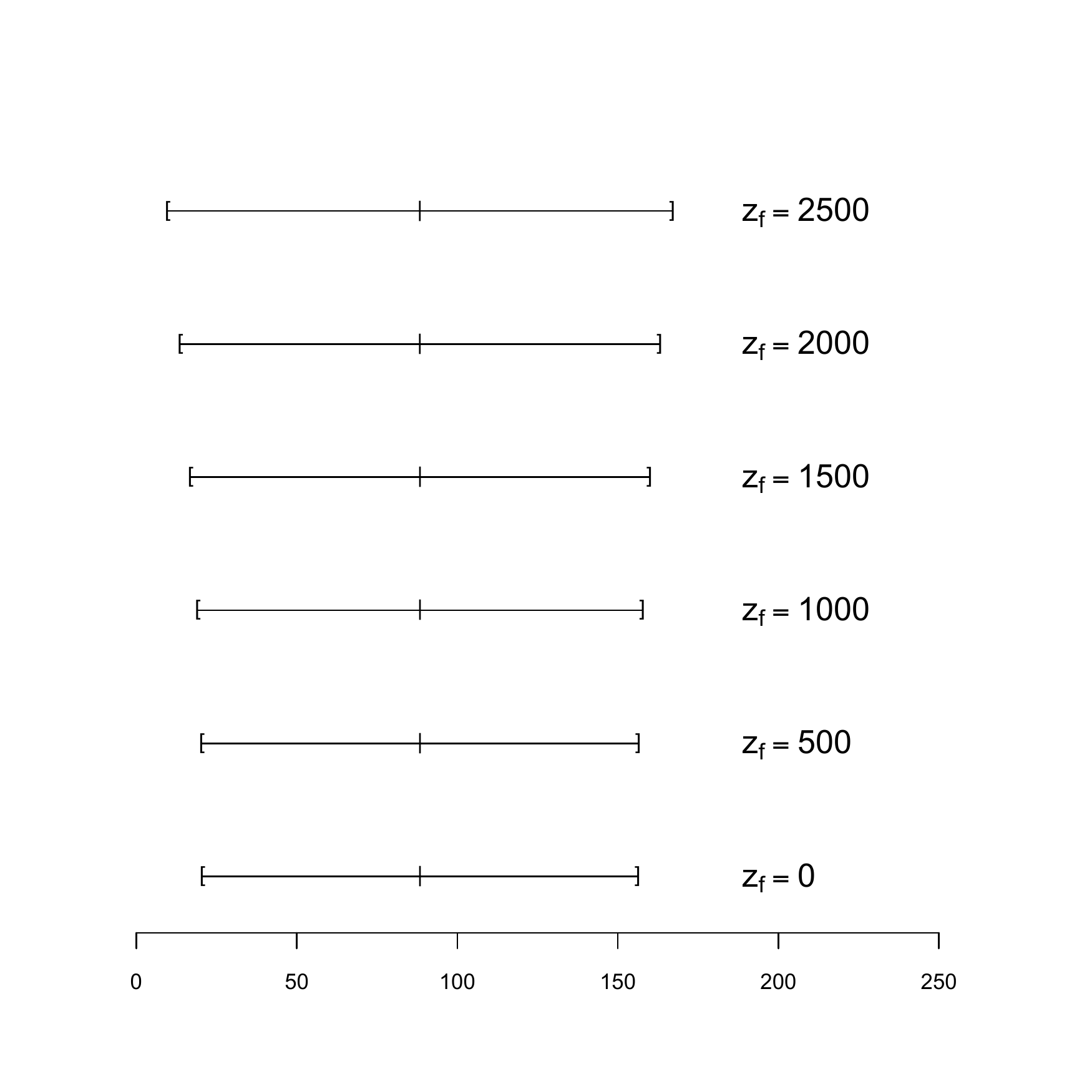}
  		\caption{Case 3.)}
	\end{subfigure}
	\caption{Intervals containing $G_n(z_f)$ with probability and conditional probability 0.99}
	\label{ConfidenceIntervals}
\end{figure}

In Figure \ref{ConfidenceIntervals} intervals are plotted for 
different values of $z_f$,  such that the random variable $G_n(z_f)$  lies in these intervals with probability $0.99$, under the 
tentative assumption that the standardised version of the total stimulation rate,  
$(G_n(z_f)-\E[G_n(z_f)])(\sqrt{\Va[G_n(z_f)]})^{-1}$, is standard normally distributed.  In Case 1.) the intervals enlarge for $z_f$ large enough because the variance of $G_n(z_f)$ increases. In Case 2.) the intervals shrink due to the decreasing variance but they move according to the value of the stimulation rate of the foreign peptide. In the presented case $W_f$ is larger than its expectation $\E[W_f]$. Thus, the intervals move to the right hand side. Case 3.) is very similar to Case 1.) except for a slight difference concerning the expectation value which is just almost surely independent of $z_f$, and thus slightly varying.

\subsection{Large deviations}

 Encounters of T-Cells and APCs happen permanently in the body,  
but only very few of them give rise to an immune reaction.
Therefore, T-Cell activation must be tuned (by suitable choice of the
activation thresholds  $g_{act}$)  such that  
activation and the corresponding immune response are rare events. 
This implies that to compute the activation probabilities, one needs to 
use large deviation techniques (see  for example \cite{FdH} or \cite{DZ}).
  In particular, the computations of means 
and variances from the previous subsection are insufficient.

We are concerned with a family of real-valued random variables $(S_n)_{n\in\N}$ and the probability that $n^{-1}S_n$ exceeds a threshold value $a$, $\P(S_n\geq na)$. A \emph{large deviation} is a deviation from the expectation value of $S_n$ of the order $n$.
If the family of random variables under consideration satisfies a so called 
\emph{large deviation principle} (LDP), the probability for a large deviation event decays exponentially in $n$ with rate $I(a)$. 
The \emph{rate function} $I(a)$ is obtained as the limit of 
the Fenchel-Legendre transformation of the logarithmic moment generating function of $S_n$, to wit $I(a)=\lim_{n\uparrow\infty} I_n(a)$, where
\begin{equation}
 I_n(a)=\sup_{\vt}(a\vt-\Psi_n(\vt)) \equiv a\vt_n-\Psi_n(\vt_n),
\end{equation}
and $\Psi_n(\vt)\equiv \frac 1n\ln\E[\exp(\vt S_n)]$ and $\vt_n$ satisfies 
\begin{equation}\label{theta}
\Psi_n'(\vt_n)=a.
\end{equation}
$\vt_n$ is known as the \emph{tilting parameter} and is used to perform an exponential change of measure in many proofs of theorems in this field. 
The standard theorems of Cram\'er and G\"artner-Ellis then state
\begin{equation}
\P(S_n\geq an)=\exp(-nI(a)(1 + \so)).
\end{equation}
As was pointed out in Zint et al. \cite{BZH}, this approximation 
is not sufficiently precise to calculate the actual probability because of
 the huge and poorly controlled multiplicative error term 
$\exp(n\so)$.
Fortunately,  there are stronger theorems available, 
known as \emph{sharp large deviation results} or \emph{exact asymptotics}. 
 This yields  
approximations  of the form  
\begin{equation}
\P(S_n\geq an)=\frac{\exp(-nI_n(a))}{\vt_n\s_n\sqrt{2\pi n}}(1+\so).
\end{equation}
with the same notation as before and $\s_n^2\equiv \Psi_n''(\vt_n)$. The standard theorem for $S_n$ a sum of i.i.d. random variables is due to 
Bahadur and Rao \cite{BR}. The generalization to independent, but not
 identically distributed random variables, which we need here,
is based on results  of Chaganty and Sethuraman  \cite{CS}. 
We restate these results  in Chapter \ref{proofs}.

These techniques will allow us to achieve our goal, namely  
 to check whether there is a threshold value $g_{act}$ such that the 
probability of activation changes by orders of magnitude with $z_f$. We would like to know if the condition $\mathbb P(G_n(0)\geq g_{act})\ll\mathbb P(G_n(z_f)\geq g_{act})$ can be satisfied for physiologically reasonable values of $z_f$. Therefore, we look at the activation probabilities as a function of $g_{act}$. These functions are often called \emph{activation curves}.

\paragraph{Application to the model of T-Cell activation.}

We look at an artificial sequence of models which is characterized by an increasing number of peptide types, $n\equiv n_c+n_v+1$. 
We assume that there exists $C\in(0,\infty)$ such that $\lim_{n\to\infty} 
n_c/n_v=C$. This implies that there exist $C_1,C_2\in (0,\infty)$ such 
that $\lim_{n\to\infty}n_c/n=C_1$ and $\lim_{n\to\infty}n_v/n=C_2$. 
We use this condition to ensure some convergence properties, 
especially for the rate function. But this is also from the biological point of view a reasonable assumption because the ratio of the numbers of constitutive and variable peptide types is constant. The  sequence of random variables  $S_n$ is given by $G_n(z_f)$ from Definition \ref{total stim}.

In \cite{BZH} certain distributions for all appearing random variables were used to prove the applicability of   Theorem \ref{abwthm} below. 
Afterwards, the approximations of the probabilities were calculated and compared to simulations. Thereby  a separation of the activation curves for different values of $z_f$ was obtained and  a high coincidence of the approximation with the simulation was observed.\\

We prove the applicability of  Theorem \ref{abwthm} in the Cases 1.), 2.),
 and 3.) in Section \ref{proofs} under suitable conditions on the 
distributions and moment generating functions of the involved random 
variables. 
In this section we state just the approximations of the 
probabilities and the involved rate functions. 

\begin{remark}
To obtain the following approximations of the probabilities we assume the existence of the moment generating and the conditional moment generating functions of $Z_1^cW_1$, $Z_1^vW_1$ and $W_1$. This condition is satisfied since we consider bounded random variables. 
\end{remark}

\paragraph{Case 1.)}

Let $M_c(\vt)\equiv \mathbb E[e^{\vt Z_1^cW_1}]$, $M_v(\vt)\equiv \mathbb E[e^{\vt Z_1^vW_1}]$ and $M(\vt)\equiv \mathbb E [e^{\vt W_1}]$ denote the moment generating functions of $Z_1^cW_1, Z_1^vW_1$ and $W_1$. Let $g_{act}(n)\equiv an$ for $a>\E[G_n(z_f)]/n$. The activation probabilities can be approximated by 
\begin{align}
\P(G_n(z_f)\geq g_{act}(n)) = &\frac{\exp(-na\vt_n(a,z_f)+n_c\ln M_c(q_n\vt_n(a,z_f)))}{\vt_n(a,z_f)\sigma_n\sqrt{2\pi n}}\nonumber \\
&\times \exp(n_v\ln M_v(q_n\vt_n(a,z_f))+\ln M(z_f\vt_n(a,z_f)))(1+\so), 
\end{align}
where $\vt_n(a,z_f)$ is chosen such that the argument of the exponential function attains its minimum.
We write $\vt_n(a,z_f)$ and not just $\vt_n$ to visualize the dependence on $a$ and $z_f$. 
We have proven this approximation in analogy to the proof of Zint, Baake,  and  den Hollander in \cite{BZH}. It only requires homogeneous distributions with certain properties in each block. The form of the dependence of the stimulation rates on the dissociation rates is not important. This can be interpreted as a tacit inclusion of competition of the peptides, association rates, loading fluctuations and similar aspects.

The interesting situation is when $z_f$ becomes large, but  remains small  compared to 
$n$. In that case, the infimum will be attained for $\vt_n\sim a$, and 
since the law of $W_1$ is assumed to have bounded support, there will be 
a constant $L$ such that $\frac{d}{d\vt}\ln M(z_f\vt)\sim L z_f$ for $\vt z_f$ large enough. For $z_f \gg \sqrt n$ we may apply this approximation since we know from the applicability of Theorem \ref{abwthm} that $\vt_n\sqrt n\to \infty$. A simple computation then shows that 
\begin{equation}\label{simple.1}
\frac{\P(G_n(z_f)\geq g_{act}(n))}
{\P(G_n(0)\geq g_{act}(n))}\sim \exp\left( \vt_n(a,0) z_f\left(L-a\frac{n}{n_M}\right)
\right),
\end{equation}
where $\vt_n(a,0)$ is the solution of Equation \ref{theta} for $z_f=0$.
This implies that if $a$ is chosen sufficiently small, the activation probability increases exponentially with 
$z_f$, as desired. 
To obtain Equation \ref{simple.1} we used two Taylor approximations: first we expanded $\vt_n(a,z_f)$ in $\vt_n(a,0)$ and after plugging in this term in the rate function we expanded the resulting expression. This way it is possible to recover the probability of activation for the self-background and calculate the ratio of interest as stated by Equation \ref{simple.1}. 
\begin{remark}
The fact that if $a$ is too big, the activation probability drops as $z_f$ increases has a simple intuitive explanation: for very large $a$, the 
contribution of the foreign peptides is limited by the maximal value of $W_f$, 
whereas the reduction of the contribution of the other peptides makes it more
unlikely to achieve an activation by a random fluctuation.
\end{remark}
\paragraph{Case 2.)}
An upper index $\RR$ on any previously defined object should signify the same object conditioned on the $\sigma$-algebra $\RR$. We consider the conditional moment generating functions 
\begin{align}
 M_{\gamma, j}^{\RR}(\vt) = \E^{\RR}[e^{\vt Z_j^{\gamma}W_j}] = \int \exp(\vt Z_j^{\gamma}W_j) dP_{Z_j^{\gamma}}, 
 \gamma \in \lbrace c,v \rbrace ,
\end{align}
where $P_{Z_j^{\gamma}}$ denotes the measure corresponding to $Z_j^{\gamma}, \gamma \in \lbrace c,v \rbrace$. 
Thus, the moment generating functions are random variables themselves. Because the  numbers of copies, $Z_j^c$ and $Z_j^v$, are independent of $\RR$ and the stimulation rates  are measurable w.r.t. $\RR$, these moment generating functions are again
i.i.d. random variables in each block. Due to the measurability of $W_f$ w.r.t. $\RR$ we have $M^{\RR}(\vt)=\exp(\vt z_fW_f)$. The resulting rate function, $I_n^{\RR}(a,z_f)$,  is also \emph{random}. 
We need to apply a law of large numbers in the proof of the approximation of the probabilities in this case. Thus, $\ln (M_{\gamma, j}^{\RR}(\vt))\in L^1(\PP^{N_1N_2})$ 
for each $\vt \geq 0$ and $\gamma \in \lbrace c,v \rbrace$ is an important ingredient for the proof.  Using this fact we can also establish convergence of the rate function according to a strong law of large numbers. But this convergence is not good enough for our purpose because the rate function and thus also the error term arising from the law of large numbers are scaled with a factor $n$ in the large deviation approximation of the probabilities. Therefore, we have to prove a \emph{functional central limit theorem} for a part of the rate function and we have to take into account the term which emerges therefrom. Thereby we obtain a process, $Z_n$, which converges weakly to a Gaussian process. Let 
\begin{equation}\label{g}
g_n(\vt) \equiv \frac{n_c}{n}\E\left[\ln \E^{\RR}\left[e^{\vt Z_1^cW_1}\right]\right] + \frac{n_v}{n} \E\left[\ln \E^{\RR}\left[e^{\vt Z_1^vW_1}\right]\right]
\end{equation}
and $\vt_0^n(a,z_f)$ be defined as the solution of 
\begin{equation}\label{theta0}
a-\frac{z_f}{n}W_f = \frac{d}{d\vt} g_n(q_n\vt). 
\end{equation}
The (random) function $I^n_0(a,z_f)\equiv a\vt_0^n(a,z_f)-g_n(q_n\vt_0^n(a,z_f))$ converges to a function $I_0(a)$. Let 
\begin{align}\label{Zn}
Z_n(\vt)\equiv &\frac{1}{\sqrt n}\left[\sum\limits_{j=1}^{n_c}\left( \ln M_{c,j}^{\RR}(\vt)-\E \left[\ln M_{c,j}^{\RR}(\vt)\right]\right)+\sum\limits_{j=n_c+1}^{n_c+n_v} \left(\ln M_{v,j}^{\RR}(\vt)-\E\left[\ln M_{v,j}^{\RR}(\vt)\right]\right)\right].
\end{align}
With this notation the activation probabilities can be  approximated almost surely according to 
\begin{align}
&\P^{\RR}(G_n(z_f)\geq g_{act}(n))\nonumber \\
= &\frac{\exp\left(-nI^n_0(a,z_f)+\sqrt n Z_n(q_n\vt^n_0(a,z_f))+z_fW_f \vt^n_0(a,z_f)+nR_n\right)}{\s_n\vt_n\sqrt{2\pi n}}(1+\so),
\end{align}
where $R_n\in \OO\left(\frac 1n\right)$.
In Section \ref{proofs} we prove the joint weak convergence of the process $Z_n(q_n\vt_0^n(a,z_f))$  and its derivatives which establishes that the expression for the probabilities is well-behaved.

As in Case 1.), the interesting situation is when $z_f$ becomes large but $z_f/n$ is small. A  computation similar as in Case 1.) then shows that  
\begin{equation}\label{simple.2}
\frac{\P^{\RR}(G_n(z_f)\geq g_{act}(n))}
{\P^{\RR}(G_n(0)\geq g_{act}(n))}\sim \exp\left( \vt_0^n(a,0)z_f\left(W_f-a \frac{n}{n_M}\right)\right).
\end{equation}
This shows that for a given choice of $a$, 
the activation probability increases exponentially with 
$z_f$ only  if $W_f$ is large enough (depending on the choice of 
$a$). 
That is, only the presence of foreign peptides which react strongly with the
particular T-Cell will lead to an increased activation frequency. 
This implies a certain degree of specificity. Although we have seen in Case 1.) that the over all 
probability of T-Cell activation increases with $z_f$, this is not true for any T-Cell type but 
just for those which are equipped with a large enough value of $W_f$.

Note that the  rate function in this case is \emph{random}, that is 
the activation probabilities fluctuate from T-Cell to T-Cell  by a factor
of order $\exp( \sqrt n Z)$, where $Z$ is random. This implies that, for
the modulation of the activiation probabilities due to foreign peptides 
to exceed these random fluctuations significantly, one should have that 
$z_f \gg \sqrt n$. This appears to give limitation on the sensitivity level 
for the recognition of foreign peptides.

\paragraph{Case 3.)}

An upper index $\ZZ$ denotes the objects conditioned on this $\sigma$- algebra. $\wt{\vt}^n(a,z_f)$ is the solution of  
\begin{equation}\label{thetatilde}
a=\frac{d}{d\vt}\wt{g}_n(q_n\vt)+ \frac{d}{d\vt} \frac 1n \ln \E\left[e^{\vt z_fW_f}\right] ,
\end{equation}
where $\wt{g}_n(\vt)\equiv \frac{n_c}{n}\E\left[\ln \E^{\ZZ}\left[e^{\vt Z_1^cW_1}\right]\right] + \frac{n_v}{n} \E\left[\ln \E^{\ZZ}\left[e^{\vt Z_1^vW_1}\right]\right]$. 
$\wt{Z}_n(\vt)$ is defined by 
\begin{align}\label{wtZn}
\wt{Z}_n(\vt)\equiv &\frac{1}{\sqrt n}\left[\sum\limits_{j=1}^{n_c}\left( \ln M_{c,j}^{\ZZ}(\vt)-\E [\ln M_{c,j}^{\ZZ}(\vt)]\right)+\sum\limits_{j=n_c+1}^{n_c+n_v} \left(\ln M_{v,j}^{\ZZ}(\vt)-\E[ \ln M_{v,j}^{\ZZ}(\vt)]\right)\right]
\end{align}
and $\wt{Z}_n(q_n\wt{\vt}^n(a,z_f))$ converges weakly to a Gaussian process. $ \wt I ^n(a,z_f)\equiv a\wt{\vt}^n(a,z_f)-\wt g_n(q_n\wt{\vt}^n(a,z_f))$ converges to a function $\wt I (a)$. 

The notations and the  proof of the result are quite similar to Case 2.) but we can recognize some structural differences in the results here. We have seen these differences between the two conditional scenarios already in the analysis of the variances and the expectation values. Here, the probability of activation can be approximated by 
\begin{align}
&\P^{\ZZ}(G_n(z_f)\geq g_{act}(n))\nonumber \\
= &\frac{\exp{(-n\wt{I}^n(a,z_f)+\sqrt n \wt{Z}_n(q_n\wt{\vt}^n(a,z_f))+\ln \E[e^{\wt{\vt}^n(a,z_f)z_fW_f}]+nR_n)}}{\s_n\vt_n\sqrt{2\pi n}}(1+\so),
\end{align}
where $R_n\in \OO\left(\frac 1n\right)$.
In the rate function appears again a term  which depends on the foreign peptide, namely $\ln \E[\exp{(\wt{\vt}^n(a,z_f)z_fW_f)}]$. But in contrast to Case 2.) this term is deterministic and the only randomness in the rate function lies in the fluctuation term which arises from conditioning. 
Thus, we are again dealing with a random rate function but this function does not vary from experiment 
to experiment by a term which is scaled with $z_f$.

The interesting situation is again when $z_f$ becomes large, but  remains small  compared to 
$n$. As in Case 1.) there will be 
a constant $L$ such that $\frac{d}{d\vt}\ln M(z_f\vt)\sim L z_f$ for $\vt z_f$ large enough. A simple computation then shows that 
\begin{equation}\label{simple.3}
\frac{\P^{\ZZ}(G_n(z_f)\geq g_{act}(n))}
{\P^{\ZZ}(G_n(0)\geq g_{act}(n))}\sim \exp\left( \wt{\vt}^n(a,0) z_f\left(L-a\frac{n}{n_M}\right)
\right),
\end{equation}
where $\wt{\vt}^n(a,0)$ is the solution of Equation \ref{thetatilde} for $z_f=0$. 
 As in Case 2.), there appears  a fluctuation term $\sqrt n Z$. Therefore, we need again $z_f\gg \sqrt n$ such that the impact of this fluctuation term is not too big. This order of $z_f$ ensures again that we may use the approximation $\frac{d}{d\vt}\ln M(z_f\vt)\sim L z_f$.

\begin{remark}
During an infection the body is flooded with the invader. Therefore, a significant ratio of the peptides presented on the APC belongs to the foreign invader and it is reasonable to consider the regime $z_f\gg\sqrt n$. 
A clear indication that a sufficiently high presentation level of the targeted peptides is needed, has been established by 
Landsberg et al. \cite{melanomas2012}  in the context of T-cell therapy of melanomas.
Our work is based on the assumption $z_f\ll n$ although parts of the results in Case 2.) are also valid for $z_f\sim n$. If this is the regime of interest we suggest to use a convolution of the distribution of the foreign stimulation rate and the distribution of the part of $G_n$ belonging to the self background since then the influence of the summand $z_fW_f$ is very large and this summand cannot be treated as the other summands. That is, one should consider $\P(G_n(z_f)\geq an)=\P(q_nG_n(0)\geq an-z_fW_f)=\int \P(n^{-1}G_n(0)\geq q_n^{-1}(a-n^{-1}z_fW_f)|W_f)dP_{W_f}$ and approximate the probability in the integral suitably, depending on the value of $q_n^{-1}(a-n^{-1}z_fW_f)$.
\end{remark}

\section{Precise formulation of  the results and proofs} \label{proofs}

To state the central large deviation result proven by Chaganty and
 Sethuraman in \cite{CS} we introduce some  notation. Let $\lbrace S_n\rbrace_{n \in \mathbb{N}}$  denote a sequence of real-valued random 		variables with moment generating functions 	$\Phi_n (\vartheta) \equiv  \mathbb{E}[\exp(\vartheta S_n)], \vartheta \in \mathbb{R}$ and let $\Psi_n$ be 	defined by 	 $\Psi_n(\vartheta) \equiv  \frac{1}{n} \ln \Phi_n(\vartheta)$.
	\begin{assumption}\label{ass1}
	There exist $\vt^* \in (0,\infty)$ and $\beta < \infty$ such that
	\begin{equation*}
	|\Psi_n(\vartheta)|<\beta, \text{for all } \vartheta \in B_{\vartheta_*}\equiv  \lbrace \vartheta \in \mathbb{C} : |\vartheta| <\vartheta_*\rbrace \text{ and } n\in\mathbb N .
	\end{equation*} 
	\end{assumption}
\begin{notations}
	Let $(a_n)_{n\in\mathbb{N}}$ be a bounded real valued sequence such that  the equation 
	    \begin{equation} \label{Theta}
		  a_n = \Psi_n'(\vartheta)
	    \end{equation}
	has a solution
 	$\vartheta_n \in (0,\vartheta_{**})$ with $\vartheta_{**} \in (0,\vartheta_*) $ for all $n\in\mathbb{N}$.
		$\sigma_n^2 \equiv  \Psi_n''(\vartheta_n)$ is the variance of the tilted version of $n^{-1}S_n$ and $I_n(a_n) \equiv  a_n\vartheta_n - \Psi_n(\vartheta_n)$ is the Fenchel-Legendre transform of $\Psi_n$. We will abusively refer to this as the \emph{rate function}.
\end{notations}
\begin{theorem}[Chaganty and Sethuraman \cite{CS}] \label{abwthm}
	If in the above setting 
  \begin{enumerate}[(i)]
    \item $\lim_{n \to \infty} \vartheta_n \sqrt{n} = \infty $ \label{1}
    \item $\inf_{n\in\mathbb{N}} \sigma_n^2 > 0$ and \label{2}
    \item $\lim_{n \to \infty} \sqrt{n} \sup_{\delta_1 \leq \left|t\right|\leq\delta_2\vt_n} 	\left|\frac{\Phi_n\left(\vt_n+it\right)}{\Phi_n\left(\vt_n\right)}\right|=0 \quad \forall 0<\delta_1<\delta_2<\infty$, \label{3}
  \end{enumerate}
		then
		\begin{equation}
				\mathbb{P}\left(S_n\geq na_n\right) = \frac{e^{-nI_n(a_n)}}{\vt_n\sigma_n\sqrt{2\pi n}}\left(1+\so \right), n\to \infty .
		\end{equation}
\end{theorem}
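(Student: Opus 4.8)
The plan is to follow the classical Bahadur--Rao route: an exponential change of measure that moves the rare event into the bulk, followed by a local (central-limit) refinement, all carried out uniformly in $n$ so as to cope with the non-identically-distributed setting of \cite{CS}. For each $n$, introduce the tilted probability measure $\wt\P_n$ defined by $\wt\P_n(S_n\in dx)=e^{\vt_n x-n\Psi_n(\vt_n)}\,\P(S_n\in dx)$, with $\vt_n$ the tilting parameter solving \eqref{Theta}. The defining relation $\Psi_n'(\vt_n)=a_n$ says precisely that $n^{-1}S_n$ has mean $a_n$ under $\wt\P_n$, and one computes that $\sigma_n^2=\Psi_n''(\vt_n)$ is the variance of $n^{-1/2}S_n$ under $\wt\P_n$. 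Writing
\begin{equation*}
\P(S_n\geq na_n)=e^{-nI_n(a_n)}\,\wt\E_n\!\left[e^{-\vt_n(S_n-na_n)}\,\1_{\{S_n\geq na_n\}}\right]
\end{equation*}
and passing to the centred, rescaled variable $W_n\equiv(S_n-na_n)/(\sigma_n\sqrt n)$ --- which has mean $0$ and variance $1$ under $\wt\P_n$, with $\sigma_n^2$ bounded below by hypothesis (ii) --- reduces the whole problem to the asymptotics of $\wt\E_n\!\left[e^{-\vt_n\sigma_n\sqrt n\,W_n}\,\1_{\{W_n\geq 0\}}\right]$.

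The second step is a local limit theorem for $W_n$ under $\wt\P_n$. Its characteristic function equals, up to a deterministic phase, the ratio $\Phi_n\!\left(\vt_n+is/(\sigma_n\sqrt n)\right)/\Phi_n(\vt_n)$. For $s$ in a fixed bounded set, Assumption \ref{ass1} --- analyticity and uniform boundedness of $\Psi_n$ on a complex disc around the origin --- justifies the quadratic Taylor expansion of $n\Psi_n$ about $\vt_n$ and gives $\wt\E_n[e^{isW_n}]\to e^{-s^2/2}$ locally uniformly, the limit being non-degenerate by hypothesis (ii). Promoting this to a statement about the tail $\P(S_n\geq na_n)$ is carried out by a Fourier-inversion / Esseen-smoothing argument, in which the frequencies that must be controlled are exactly those in the band $\delta_1\leq|t|\leq\delta_2\vt_n$; hypothesis (iii) is precisely the uniform, $\sqrt n$-weighted bound that renders their contribution negligible. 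The outcome is that, under $\wt\P_n$, $W_n$ has a law close to the standard Gaussian, uniformly on compact sets, with an $o(1)$ error.

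The third step is a Laplace-type evaluation of the surviving expectation. Hypotheses (i) and (ii) give $\vt_n\sigma_n\sqrt n\to\infty$, so the exponential weight $e^{-\vt_n\sigma_n\sqrt n\,x}$ concentrates the integral $\int_0^\infty e^{-\vt_n\sigma_n\sqrt n\,x}\,p_n(x)\,dx$ (with $p_n$ the near-Gaussian density of $W_n$) on a shrinking right-neighbourhood of $0$; replacing $p_n$ there by $\varphi$, the standard Gaussian density, and integrating yields asymptotic equivalence with $\varphi(0)\int_0^\infty e^{-\vt_n\sigma_n\sqrt n\,x}\,dx=\tfrac{1}{\sqrt{2\pi}}\cdot\tfrac{1}{\vt_n\sigma_n\sqrt n}$. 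Multiplying by the prefactor $e^{-nI_n(a_n)}$ produces
\begin{equation*}
\P(S_n\geq na_n)=\frac{e^{-nI_n(a_n)}}{\vt_n\sigma_n\sqrt{2\pi n}}\,(1+\so),
\end{equation*}
which is the assertion.

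The main obstacle is the second step. A bare central limit theorem for $W_n$ is insufficient, because the exponential weight in the third step sits at the endpoint $x=0$ and hence probes the \emph{local} behaviour of the law of $W_n$; one really needs a local-limit/Edgeworth refinement, and moreover one valid \emph{uniformly in $n$}, since the leading term carries the enormous factor $e^{-nI_n(a_n)}$ against which any non-uniform error would be fatal. All three hypotheses of the theorem are consumed here --- (iii) to kill the ``bulk'' band of frequencies in the Fourier inversion, Assumption \ref{ass1} to license the quadratic expansion of $n\Psi_n$ near the tilting point, and (i)--(ii) to make the subsequent Laplace asymptotics legitimate --- and the uniform bookkeeping of all error terms is the technical heart of \cite{CS}. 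In the present paper Theorem \ref{abwthm} is merely quoted; the real work is deferred to Section~\ref{proofs}, where hypotheses (i)--(iii) are verified for the total stimulation rates $G_n(z_f)$ in each of the three cases.
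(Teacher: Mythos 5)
The paper does not prove Theorem \ref{abwthm}: it is quoted from Chaganty and Sethuraman \cite{CS}, and the paper's own work in Section \ref{proofs} is restricted to verifying hypotheses (i)--(iii) for $S_n=G_n(z_f)$ in the three scenarios, so there is no in-paper proof to compare against. Your sketch is nonetheless a correct high-level account of the Bahadur--Rao/Chaganty--Sethuraman argument --- exponential tilt, a local-limit refinement for the tilted law with the intermediate frequency band killed by hypothesis (iii), and a Laplace endpoint evaluation driven by $\vt_n\sigma_n\sqrt{n}\to\infty$ from (i) and (ii) --- and you correctly pinpoint both the role of each hypothesis and the uniform-in-$n$ error control in the local-limit step as the genuine technical content of \cite{CS}, which, as you observe in your closing paragraph, the present paper simply defers to the citation.
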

 We give here the precise conditions that we impose on the distributions of the involved random variables to ensure the applicability of 
Theorem \ref{abwthm} with $S_n=G_n(z_f)$ in the three different cases.

\paragraph{Case 1.)} This case has been considered in Zint et al. \cite{BZH}.
We state their result under slightly more general assumptions.
Below the quantities $\Phi_n,\Psi_n$ and $\vt_n$ are defined as
above with $S_n=G_n(z_f)$. 
\begin{theorem}
 Let $(a_n)_{n\in \mathbb N}$ be defined by $a_n\equiv a$ and $g_{act}(n)=an$ 
 such that $g_{act}(n)>\mathbb E [G_n(z_f)]$ and 
$a<\sup_{\vt\in \mathbb R} \frac{d}{d\vt} \Psi_n(\vt)$ for all $n\in\mathbb N$. Then Theorem \ref{abwthm} is
 applicable provided $z_f/n\downarrow 0$, the distribution functions 
of $Z_1^cW_1$, $Z_1^vW_1$ and $W_1$ are neither lattice valued nor 
concentrated on one point,
 and the corresponding moment generating functions $M_c(\vt),M_v(\vt)$ and $M(\vt)$ are finite for each $\vt\in \mathbb R$. The rate function is
 \begin{equation}
	I_n(a,z_f)=a\vt_n(a,z_f)-\frac{n_c}{n}\ln M_c(q_n\vt_n(a,z_f))-\frac{n_v}{n}\ln M_v(q_n\vt_n(a,z_f))-\frac{1}{n}\ln M(z_f\vt_n(a,zf)).
\end{equation}
\end{theorem}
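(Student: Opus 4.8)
The plan is to verify the three numbered hypotheses of Theorem \ref{abwthm} for the specific sequence $S_n = G_n(z_f)$, after first computing $\Psi_n$ explicitly. Since $G_n(z_f)$ is a sum of independent (but not identically distributed) terms — the $n_c$ terms $q_n Z_j^c W_j$, the $n_v$ terms $q_n Z_j^v W_j$, and the single term $z_f W_f$ — the moment generating function factorizes:
\begin{equation}
\Phi_n(\vt) = \E[e^{\vt G_n(z_f)}] = M_c(q_n\vt)^{n_c}\, M_v(q_n\vt)^{n_v}\, M(z_f\vt),
\end{equation}
so that $\Psi_n(\vt) = \tfrac{n_c}{n}\ln M_c(q_n\vt) + \tfrac{n_v}{n}\ln M_v(q_n\vt) + \tfrac1n\ln M(z_f\vt)$, which immediately gives the stated form of the rate function once we define $\vt_n(a,z_f)$ as the solution of $\Psi_n'(\vt) = a$. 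First I would record this computation and check that, under the finiteness assumption on $M_c, M_v, M$ on all of $\mathbb R$ together with the boundedness of all the random variables, $\Psi_n$ extends analytically to a complex strip and is uniformly bounded there; this is Assumption \ref{ass1}, and it also guarantees that $\vt_n(a,z_f)$ exists and lies in a bounded interval $(0,\vt_{**})$ for the admissible range of $a$ (here one uses $a < \sup_\vt \tfrac{d}{d\vt}\Psi_n(\vt)$ and $a > \E[G_n(z_f)]/n = \Psi_n'(0)$, plus strict convexity of $\Psi_n$, which follows from the distributions not being concentrated on a point).

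Next I would dispatch hypotheses (ii) and (i). For (ii), $\sigma_n^2 = \Psi_n''(\vt_n)$ is, up to the $q_n^2$ and $z_f^2/n$ weights, a convex combination of the variances of the tilted block variables; since $q_n \to 1$, $n_c/n \to C_1 > 0$, $n_v/n \to C_2 > 0$, and the untilted variances $\Va[W_1]$ etc. are strictly positive (the distributions are non-degenerate) and the tilting parameter stays in a compact set, one gets $\inf_n \sigma_n^2 > 0$. For (i), $\lim_n \vt_n\sqrt n = \infty$, the point is that $\vt_n(a,z_f)$ converges to a strictly positive limit $\vt(a) > 0$ — because $\Psi_n'$ converges to a limiting strictly convex function whose derivative equals $a$ at a positive argument — so $\vt_n\sqrt n \sim \vt(a)\sqrt n \to \infty$. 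Here one should be a little careful that the single term $\tfrac1n\ln M(z_f\vt)$ does not spoil convergence: with $z_f/n \downarrow 0$ and $M$ finite everywhere, this term and its derivative $\tfrac{z_f}{n}\tfrac{d}{d\vt}\ln M(z_f\vt)$ stay bounded (indeed the derivative is $O(z_f/n) \to 0$ when $z_f$ is bounded, and when $z_f\to\infty$ one uses $\tfrac{d}{d\vt}\ln M(z_f\vt)\sim L z_f$ so the derivative term is $O(z_f/n)\to 0$), so the limiting equation for $\vt$ is governed by the two block sums alone.

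The main obstacle is hypothesis (iii), the non-lattice / oscillation bound
\begin{equation}
\lim_{n\to\infty}\sqrt n \sup_{\delta_1 \le |t| \le \delta_2\vt_n}\left|\frac{\Phi_n(\vt_n+it)}{\Phi_n(\vt_n)}\right| = 0.
\end{equation}
Using the factorization, $\left|\Phi_n(\vt_n+it)/\Phi_n(\vt_n)\right| = \left|\tfrac{M_c(q_n(\vt_n+it))}{M_c(q_n\vt_n)}\right|^{n_c}\left|\tfrac{M_v(q_n(\vt_n+it))}{M_v(q_n\vt_n)}\right|^{n_v}\left|\tfrac{M(z_f(\vt_n+it))}{M(z_f\vt_n)}\right|$, and the strategy is to show that each of the two block ratios is bounded by $e^{-c}$ for some $c>0$ uniformly over the $t$-range, so the product decays like $e^{-c(n_c+n_v)} = e^{-c'n}$, which beats the $\sqrt n$ prefactor. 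The key lemma is that for a bounded, non-lattice random variable $X$ with finite mgf, the tilted characteristic function $t \mapsto |M(\vt+it)/M(\vt)|$ is bounded away from $1$ on any compact set of $t$ away from $0$, uniformly for $\vt$ in a compact set; the delicate part is the \emph{upper} end of the range, $|t|$ up to $\delta_2\vt_n$ (which is bounded since $\vt_n$ is bounded), so the whole $t$-interval is compact and the non-lattice hypothesis gives the needed strict bound. I would either cite the corresponding estimate from \cite{CS} or \cite{BZH}, or reprove it via the standard argument that $|M(\vt+it)/M(\vt)| = 1$ forces the tilted law (equivalently the original law) to be supported on a lattice. Finally I would assemble: Assumption \ref{ass1} and (i)--(iii) hold, so Theorem \ref{abwthm} applies and yields the claimed sharp asymptotic expansion with the stated rate function, completing the proof.
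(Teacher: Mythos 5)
Your proposal is correct and follows essentially the same route as the paper: factorize $\Phi_n$ into the $M_c$, $M_v$, and $M(z_f\cdot)$ blocks, use boundedness/finiteness to verify Assumption \ref{ass1}, show $\vt_n$ converges to a strictly positive limit (giving (i) and keeping the $t$-range compact), bound $\sigma_n^2$ below via strict convexity and non-degeneracy (giving (ii)), and bound the tilted characteristic-function ratios strictly below $1$ on a compact $t$-interval by the non-lattice hypothesis so the product decays exponentially in $n$ (giving (iii)). The only cosmetic difference is that you spell out more explicitly why the single $\tfrac1n\ln M(z_f\vt)$ term is harmless when $z_f\to\infty$; the paper passes over that point more quickly.
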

\begin{proof}
 The moment generating function of the random variable $G_n$ is given by 
 \begin{equation}
 	\Phi_n(\vt) = M_c(q_n\vt)^{n_c}M_v(q_n\vt)^{n_v}M(z_f\vt).
 \end{equation}
It reduces to $\Phi_n(\vt)=M_c(\vt)^{n_c}M_v(\vt)^{n_v}$ if  $z_f=0$.
Assumption \ref{ass1} is satisfied because the following holds:
For each $x\in\mathbb R_+$ and all $\vt<x$
\begin{align}
 \Psi_n(\vt) &\leq \frac{n_c}{n}\ln M_c(q_nx) + \frac{n_v}{n}\ln M_v(q_nx) +\frac 1n\ln M(z_fx) \nonumber \\ 
 &\leq \ln M_c(x)+ \ln M_v(x) +\ln M(z_fx) \equiv \beta(x)
\end{align}
because $\Psi_n(\vt)$ is strictly increasing and $q_n,n_c/n$ and $n_v/n$ are smaller than $1$.
$\vt_n(a,z_f)$ is defined as the (unique) solution of 
\begin{equation}\label{Diff}
 a=\frac{n_c}{n}\left[\frac{d}{d\vt}\ln M_c(q_n\vt)\right] + \frac{n_v}{n}\left[\frac{d}{d\vt}\ln M_v(q_n\vt)\right]+\frac 1n \left[\frac{d}{d\vt}\ln M(z_f\vt)\right].
\end{equation}
This equation results from Equation \ref{Theta} and the choice $a_n\equiv a$. The solution exists since the function $\frac{d}{d\vt}\Psi_n(\vt)$ runs from $\frac 1n \mathbb E[G_n(z_f)]=\frac{d}{d\vt}\Psi_n(\vt)|_{\vt=0}$ to $\sup_{\vt\in\mathbb R} \frac{d}{d\vt}\Psi_n(\vt)$ and 
$a$ lies in between these values. It is unique because $\frac{d}{d\vt}\Psi_n(\vt)$ is strictly increasing.  Because $\frac{n_c}{n}\to C_1$, $\frac{n_v}{n}\to C_2$, $\frac{z_f}{n}\to 0$ and $q_n\to 1$, Equation \ref{Diff} converges. The limit equation is $a=C_1\frac{d}{d\vt}\ln M_c(\vt) + C_2\frac{d}{d\vt}\ln M_v(\vt)$.
Thus, there exists $C\in(0,\infty)$ such that $\lim_{n\to\infty}\vt_n=C$. $C$ is strictly positive because $a>\frac{d}{d\vt}\Psi_n(\vt)|_{\vt=0}$. Consequently, Condition (\ref{1})
of Theorem \ref{abwthm} is satisfied.\\
We define
\begin{equation}
 \sigma_n^2=\left.\left(\frac{n_c}{n}\frac{d^2}{d\vt^2} \ln M_c(q_n\vt) + \frac{n_v}{n}\frac{d^2}{d\vt^2} \ln M_v(q_n\vt)
+ \frac{1}{n}\frac{d^2}{d\vt^2}\ln M(z_f\vt)\right)\right|_{\vt=\vt_n(a,z_f)}.
\end{equation}
This equation converges as the previous one and the second derivatives of $\ln M_{\gamma}(q_n\vt)$, $\gamma\in\lbrace c,v\rbrace$ are positive due to the
 strict convexity of these functions. Thus, Condition (\ref{2}) of Theorem \ref{abwthm} is satisfied, too.\\
We define
\begin{align}
\nu_{\gamma}^n(t) = \frac{M_{\gamma}(q_n(\vt_n(a,z_f) +it))}{M_{\gamma}(q_n\vt_n(a,z_f))}, \gamma \in \lbrace c,v\rbrace  \quad \text{ and }\quad \nu^n(t) = \frac{M(z_f(\vt_n(a,z_f)+it))}{M(z_f\vt_n(a,z_f))}.
\end{align}
These are the characteristic functions of the tilted random variables. The distribution functions corresponding  to these characteristic functions are also neither lattice valued nor concentrated on one point. Because $\vt_n(a,z_f)\to C$ and $q_n\to 1$, there exist $\e>0$ and $n_0<\infty$  for each $t\neq 0$
 such that for all $n\geq n_0$
\begin{equation*}
 |\nu^n_{\gamma}(t)|\leq 1-\e, \gamma \in\lbrace c,v\rbrace \text{ and } |\nu^n(t)|\leq 1-\e.
\end{equation*}
We obtain
\begin{align}\label{cf}
\left|\frac{\Phi_n(\vt_n(a,z_f)+it)}{\Phi_n(\vt_n(a,z_f))}\right| &=\left|\frac{M_c(q_n(\vt_n(a,z_f)+it))^{n_c}M_v(q_n(\vt_n(a,z_f)+it))^{n_v}M(z_f(\vt_n(a,z_f)+it))}{M_c(q_n\vt_n(a,z_f))^{n_c}M_v(q_n\vt_n(a,z_f))^{n_v}M(z_f\vt_n(a,z_f))}\right| \nonumber \\
&=\left|(\nu_c^n(t))^{n_c}(\nu_v^n(t))^{n_v}\nu^n(t)\right|\nonumber \\
&\leq (1-\e)^n = \po\left(\frac{1}{\sqrt{n}}\right), \quad n\to\infty.
\end{align}
It remains to consider the supremum over the values of $t$ in Condition (iii).
 Since $\vt_n(a,z_f)$ converges, the supremum is taken over a compact set. 
Thus, this function attains a maximum on this interval and this can be bounded according to Equation \ref{cf}. Therefore, Condition (\ref{3}) 
of Theorem \ref{abwthm} is satisfied and Theorem \ref{abwthm} is applicable.
\qed\end{proof}

\begin{remark} The case $z_f\sim n$ requires a special treatment. 
This is, however, best relegated to the following Case 2.).
\end{remark}

\paragraph{Case 2.)}
We denote by $\E_{\PP^{N_1N_2}}[\cdot]$ the expectation w.r.t. the measure $\PP^{N_1N_2}$ which is the joint distribution of all the dissociation rates. We would like to show that the conditions of Theorem \ref{abwthm} are almost surely satisfied. As already mentioned it is important that $\ln(M_{\gamma, j}^{\RR}(\vt)) \in L^1(\PP^{N_1N_2})$, where $\gamma\in\{ c,v\}$. Under the assumption $M_{\gamma}(\vt)<\infty$ for each $\vt \in (0,\vt_{**})$ we have that $\E^{\RR}[\exp(\vt Z_1^{\gamma}W_1)]\in L^1(\PP^{N_1N_2})$. Combined with 
\begin{equation}
0\leq \ln (\E^{\RR}[\exp(\vt Z_1^{\gamma}W_1)]) < \E^{\RR}[\exp(\vt Z_1^{\gamma}W_1)], \text{ for } \vt\geq 0,
\end{equation}
this yields
 \begin{equation}
 \ln(M_{\gamma, j}^{\RR}(\vt)) = \ln (\E^{\RR}[\exp(\vt Z_1^{\gamma}W_1)])\in L^1(\PP^{N_1N_2}),
 \end{equation}
where $\gamma\in\{ c,v\}$. Below we denote by $\Psi_n^\RR$, $\vt_n^\RR(a,z_f)\equiv \vt_n(a,z_f)$ the analogues of
the quantities $\Psi_n$, $\vt_n(a,z_f)$ under the conditional 
 expectations 
$\E^\RR$. For notational simplicity we drop that superscript on $\vt_n(a,z_f)$, but it is important to keep in mind that this is now a random variable, too. 

\begin{theorem}
 Let $(a_n)_{n\in\mathbb N}$ be defined by $a_n \equiv a$ and $g_{act}(n)=an$  such that $g_{act}(n) >\E^{\RR}[G_n(z_f)]$ and $a < \sup_{\vt \in \mathbb R} \frac{d}{d\vt} \Psi_n^{\RR}(\vt)$ for all $n\in \mathbb N$. Then Theorem \ref{abwthm} is almost surely applicable  if the distribution functions of the stimulation rates are neither lattice valued nor concentrated on one point and the moment generating functions $M_{c,j}^{\RR}(\vt)$, $M_{v,j}^{\RR}(\vt)$ and $M^{\RR}(\vt)$ as well as $M_c(\vt)$, $M_v(\vt)$ and $M(\vt)$ are finite for each $\vt\in\mathbb R$. Then the rate function is
 \begin{align}
 	I_n^{\RR}(a,z_f) =   &\left(a- \frac{z_f}{n}W_f\right)\vt_n(a,z_f) \nonumber \\
	&-\frac 1n \left(\sum_{j=1}^{n_c}\ln M_{c,j}^{\RR}(q_n\vt_n(a,z_f)) + \sum_{j=n_c+1}^{n_c+n_v} \ln M_{v,j}^{\RR}(q_n\vt_n(a,z_f))\right).
 \end{align}
\end{theorem}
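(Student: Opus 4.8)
The plan is to mirror the Case 1.) proof — verify Assumption \ref{ass1} and the three hypotheses of Theorem \ref{abwthm} for $S_n=G_n(z_f)$, now under the conditional law $\E^\RR$ — the new point being that the cumulant generating function has become a \emph{random} object, so every limit must come from a strong law of large numbers (SLLN) holding on a common full-measure event, and hypothesis (iii) needs a different argument. I work in the paper's main regime $z_f/n\to0$, so $q_n\to1$ (the case $z_f\sim n$ is analogous, keeping the term $\tfrac{z_f}n\vt W_f$ and replacing the limit of $q_n$ by the appropriate constant). Since $W_f$ is $\RR$-measurable,
\begin{equation*}
\Phi_n^\RR(\vt)=\E^\RR\!\left[e^{\vt G_n(z_f)}\right]=e^{\vt z_fW_f}\prod_{j=1}^{n_c}M_{c,j}^\RR(q_n\vt)\prod_{j=n_c+1}^{n_c+n_v}M_{v,j}^\RR(q_n\vt),
\end{equation*}
so $\Psi_n^\RR(\vt)=\tfrac{z_f}n\vt W_f+\tfrac1n\sum_{j=1}^{n_c}\ln M_{c,j}^\RR(q_n\vt)+\tfrac1n\sum_{j=n_c+1}^{n_c+n_v}\ln M_{v,j}^\RR(q_n\vt)$. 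Because $Z_j^\gamma$ and $W_j$ are bounded, $Z_j^\gamma W_j$ is bounded by a deterministic constant, which gives deterministic two-sided bounds on $\ln M_{\gamma,j}^\RR(\vt)$ for real $\vt\in(-\vt_*,\vt_*)$; together with the bounded term $\tfrac{z_f}n\vt W_f$ this yields Assumption \ref{ass1} with a deterministic $\beta$ (as in Case 1.)). Writing $\vt_n$ for $\vt_n(a,z_f)$, one defines it by the analogue of Equation \ref{Theta}, namely $a=\tfrac{d}{d\vt}\Psi_n^\RR(\vt)$: strict convexity of the $\ln M_{\gamma,j}^\RR$ (the conditional laws are a.s.\ non-degenerate) makes $\tfrac{d}{d\vt}\Psi_n^\RR$ strictly increasing from $\tfrac1n\E^\RR[G_n(z_f)]$ at $\vt=0$ up to its supremum, and $a$ lies strictly in between by the two hypotheses, so $\vt_n\in(0,\vt_{**})$ exists and is unique a.s.

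For hypothesis (i) I would apply the SLLN. The derivative $(\ln M_{\gamma,j}^\RR)'(\vt)=\E^\RR[Z_j^\gamma W_j e^{\vt Z_j^\gamma W_j}]/M_{\gamma,j}^\RR(\vt)$ is bounded in modulus by $\|Z_j^\gamma W_j\|_\infty$, hence lies in $L^1(\PP^{N_1N_2})$, so a.s., simultaneously along a countable dense set of $\vt$ and then for all $\vt$ by monotonicity and convexity,
\begin{equation*}
\tfrac1n\sum_{j=1}^{n_c}(\ln M_{c,j}^\RR)'(q_n\vt)\to C_1\tfrac{d}{d\vt}\E[\ln M_{c,1}^\RR(\vt)],\qquad\tfrac1n\sum_{j=n_c+1}^{n_c+n_v}(\ln M_{v,j}^\RR)'(q_n\vt)\to C_2\tfrac{d}{d\vt}\E[\ln M_{v,1}^\RR(\vt)],
\end{equation*}
while $\tfrac{z_f}nW_f\to0$ and $q_n\to1$. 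Hence the equation defining $\vt_n$ converges to $a=C_1\tfrac{d}{d\vt}\E[\ln M_{c,1}^\RR(\vt)]+C_2\tfrac{d}{d\vt}\E[\ln M_{v,1}^\RR(\vt)]$, whose unique root $\vt_\infty$ is strictly positive because $a$ exceeds the value $\tfrac1n\E^\RR[G_n(z_f)]$ taken at $\vt=0$; monotonicity of the strictly increasing left-hand sides then forces $\vt_n\to\vt_\infty>0$ a.s., so $\vt_n\sqrt n\to\infty$, which is (i). Hypothesis (ii) is the same argument one derivative higher: $\sigma_n^2=(\Psi_n^\RR)''(\vt_n)\to C_1\E[(\ln M_{c,1}^\RR)''(\vt_\infty)]+C_2\E[(\ln M_{v,1}^\RR)''(\vt_\infty)]$, a strictly positive limit by strict convexity, and $\sigma_n^2>0$ for each $n$ a.s., so $\inf_n\sigma_n^2>0$ a.s.

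The main obstacle is hypothesis (iii). Since $|e^{itz_fW_f}|=1$, one has $\big|\Phi_n^\RR(\vt_n+it)/\Phi_n^\RR(\vt_n)\big|=\prod_{j=1}^{n_c}|\nu_{c,j}^n(t)|\prod_{j=n_c+1}^{n_c+n_v}|\nu_{v,j}^n(t)|$ with $\nu_{\gamma,j}^n(t)\equiv M_{\gamma,j}^\RR(q_n(\vt_n+it))/M_{\gamma,j}^\RR(q_n\vt_n)$; unlike in Case 1.), these conditional characteristic functions are random and — as $\vt_n$ depends on all of $\RR$ — not independent across $j$, so the clean $(1-\e)^n$ bound is unavailable. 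I would first use the a.s.\ limits $\vt_n\to\vt_\infty$ ($\vt_\infty$ \emph{deterministic}) and $q_n\to1$ together with the uniform Lipschitz property of $\vt\mapsto M_{\gamma,j}^\RR(\vt)$ on compacts to replace $|\nu_{\gamma,j}^n(t)|$ by $|\hat\nu_{\gamma,j}(t)|$, where $\hat\nu_{\gamma,j}(t)\equiv M_{\gamma,j}^\RR(\vt_\infty+it)/M_{\gamma,j}^\RR(\vt_\infty)$, up to an error that is $\so$ uniformly in $j$ and in $t$ over compacts; crucially the $\hat\nu_{\gamma,j}(t)$ \emph{are} i.i.d.\ in $j$. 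Then the arithmetic--geometric mean inequality and the SLLN for the bounded i.i.d.\ variables $|\hat\nu_{c,j}(t)|\in[0,1]$ give, along a countable dense set of $t$ and then uniformly on compacts by equicontinuity,
\begin{equation*}
\prod_{j=1}^{n_c}|\nu_{c,j}^n(t)|\le\Big(\tfrac1{n_c}\sum_{j=1}^{n_c}|\nu_{c,j}^n(t)|\Big)^{n_c}=\big(m_c(t)+\so\big)^{n_c},\qquad m_c(t)\equiv\E\big[|\hat\nu_{c,1}(t)|\big],
\end{equation*}
and likewise for the $v$-block; here $m_c(t)<1$ for every $t\neq0$ because the a.s.\ non-degeneracy and non-lattice property of the conditional law of $Z_1^cW_1$ force $|\hat\nu_{c,1}(t)|<1$ a.s., and $m_c$ is continuous by dominated convergence. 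On the fixed compact set $\{\delta_1\le|t|\le\delta_2\vt_\infty+1\}$, which eventually contains $\{\delta_1\le|t|\le\delta_2\vt_n\}$ since $\vt_n\to\vt_\infty$, both $\inf(1-m_c)$ and $\inf(1-m_v)$ are positive, so with $n_c\sim C_1n$ and $n_v\sim C_2n\to\infty$ the product is $\le e^{-cn}$ for some $c>0$, i.e.\ $\po(n^{-1/2})$, uniformly over the relevant range of $t$; this is (iii). Intersecting the countably many full-measure events invoked above, Theorem \ref{abwthm} applies a.s., and reading off $I_n^\RR(a,z_f)=a\vt_n-\Psi_n^\RR(\vt_n)$ — with the $\tfrac{z_f}n\vt_nW_f$ term absorbed into $a\vt_n$ as $\big(a-\tfrac{z_f}nW_f\big)\vt_n$ — gives precisely the stated rate function.
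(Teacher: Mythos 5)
Your plan correctly mirrors the Case 1.) proof for Assumption \ref{ass1} and Conditions (i), (ii), and your treatment of Condition (iii) is a genuinely different route from the paper's. The paper applies a law of large numbers directly to $\tfrac1{n_c}\sum_j\ln\bigl|\nu_{c,j}^n(t)\bigr|$, takes the $\so$ into the exponent, and bounds the limiting expectation via $\E\bigl[\ln|\nu|\bigr]\le\ln(1-\e)\,\PP^{N_1N_2}(|\nu|\le1-\e)$; this sidesteps ever naming a finite expectation for $\ln|\nu|$, but it implicitly applies an SLLN to summands that depend on the random $\vt_n$, and hence are not actually i.i.d.\ across $j$. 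Your version --- replace $\vt_n$ by its deterministic a.s.\ limit $\vt_\infty$ uniformly in $j$ and $t$ first, then use AM--GM to reduce the product to a power of an arithmetic mean of the bounded i.i.d.\ quantities $|\hat\nu_{\gamma,j}(t)|\in[0,1]$, then apply the SLLN --- is cleaner on both counts: no integrability worry for a possibly $-\infty$-valued $\ln|\nu|$, and the i.i.d.\ structure needed for the LLN is genuinely in place. It buys a slightly cruder exponential rate (by Jensen, $\E[\ln|\nu|]\le\ln\E[|\nu|]$), which is harmless for Condition (iii).

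There is, however, a genuine error in the step establishing $m_c(t)<1$. You claim that ``the a.s.\ non-degeneracy and non-lattice property of the conditional law of $Z_1^cW_1$ force $|\hat\nu_{c,1}(t)|<1$ a.s.''\ This is false: conditionally on $\RR$ (i.e.\ on $W_1$), $Z_1^cW_1=W_1Z_1^c$ is supported on the lattice $W_1\mathbb Z$ because $Z_1^c$ is integer-valued --- the paper flags exactly this (``We have to take into account that $Z_j^c$ and $Z_j^v$ should be lattice valued random variables...''). Consequently $|\hat\nu_{c,1}(t)|=1$ does occur for certain pairs $(t,W_1)$, and the ``a.s.''-claim cannot hold. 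The correct argument (which is what the paper actually uses) is unconditional: for fixed $t\neq0$, the event $\{|\hat\nu_{c,1}(t)|=1\}$ forces $W_1$ into a lattice of $\mathbb R$ determined by $t$ and the span of $Z_1^c$, and since the law of $W_1$ is assumed non-lattice, this event has $\PP^{N_1N_2}$-probability strictly less than one. That gives $\PP^{N_1N_2}\bigl(|\hat\nu_{c,1}(t)|<1\bigr)>0$, which suffices for $m_c(t)=\E\bigl[|\hat\nu_{c,1}(t)|\bigr]<1$. Your conclusion and the rest of the compactness/continuity argument stand, but the justification must route through the non-lattice property of $W_1$, not through a (nonexistent) non-lattice property of the $\RR$-conditional law of $Z_1^cW_1$.
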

\begin{proof}
	Due to the monotonicity of the logarithmic moment generating function for each realization of the stimulation rates and the boundedness of all involved random variables we can  find again $\beta(x)$ such that $\Psi_n^{\RR}(\vt)<\beta(x)$ for all $\vt < x$. Thus, Assumption \ref{ass1} is satisfied.  
Recall that $\vt_n(a,z_f)$ is defined as the solution of 
the equations 
\begin{equation}
a=\sum_{j=1}^{n_c}\frac d{d\vt}\ln M_c^{\RR}(q_n\vt) +  \sum_{j=n_c+1}^{n_c+n_v}\frac d{d\vt}\ln M_v^{\RR}(q_n\vt)+\frac{z_f}{n}W_f.
\end{equation}
The solution $\vt_n(a,z_f)$ exists due to the choice of $a$ and is unique due to the strict convexity of $\Psi_n^{\RR}$.

For $\vt \geq 0$ we have, by the law of large numbers,  
	\begin{align} \label{schoen.1}
		&\lim_{n\to\infty} \frac 1n\left( \sum_{j=1}^{n_c}\ln M_{c,j}^{\RR}(q_n\vt) + \sum_{j=n_c+1}^{n_c+n_v} \ln M_{v,j}^{\RR}(q_n\vt)\right) \nonumber \\
		=& C_1\E_{\PP^{N_1N_2}}\left[\ln M_{c,1}^{\RR}(\vt)\right] + C_2\E_{\PP^{N_1N_2}}\left[\ln M_{v,1}^{\RR}(\vt)\right], \quad \PP^{N_1N_2}-a.s.
	\end{align}
Since the derivatives of the summands satisfy the bounds
	\begin{align}
		0 & \leq \frac{d}{d\vt} \ln \E^{\RR}[\exp(\vt q_n Z_1^{\gamma}W_1)] = \frac {q_nW_1 \E^{\RR}[Z_1^{\gamma}\exp(\vt q_n Z_1^{\gamma} W_1)]}{\E^{\RR}[\exp(\vt q_n Z_1^{\gamma} W_1)]} \nonumber \\
		& \leq \frac{q_nW_1Z_1^{\gamma,\max} \E^{\RR}[\exp(\vt q_n Z_1^{\gamma} W_1)]}{\E^{\RR}[\exp(\vt q_n Z_1^{\gamma} W_1)]} = q_n W_1Z_1^{\gamma,\max}, 
	\end{align}
where $Z_1^{\gamma,\max}$ denotes the maximal value of $Z_1^{\gamma}$, $\gamma \in \lbrace c, v \rbrace$. Therefore 
they are integrable and hence the limit of the derivatives on the left-hand side of Equation \eqref{schoen.1} exists and is equal to the 
derivative of the right-hand side.  
If either $\lim_{n\to \infty}{z_f/n}=0$, or $\lim_{n\to\infty}{z_f/n}=C>0$, the equations 
determining $\vt_n(a,z_f)$ 
converge almost surely and therefore so does the solution $\vt_n(a,z_f)$. 
 Thus, Condition (\ref{1}) of Theorem \ref{abwthm} is again satisfied.
 
We have $(\frac{d^2}{d\vt ^2} \Psi_n^{\RR} (\vt))|_{\vt=\vt_n(a,z_f)} >0$ for each $n$ due to the strict convexity of $\Psi_n^{\RR}$. So, it remains to check whether this holds true in the limit $n\to \infty$. $\lim_{n\to\infty} \frac{d^2}{d \vt ^2}\Psi_n^{\RR}(\vt)$ exists because the summands of the derivative are again bounded and therefore integrable, since
\begin{align}
	0 &\leq \frac{d^2}{d \vt ^2} \ln \E^{\RR}[ \exp(\vt Z_1^{\gamma} W_1)]  \leq \frac{ q_n^2 W_1^2\E^{\RR}[(Z_1^{\gamma})^2e^{\vt q_n Z_1^{\gamma} W_1}]\E^{\RR}[e^{\vt q_nZ_1^{\gamma} W_1}]-\E^{\RR}[q_nW_1Z_1^{\gamma}e^{\vt q_nZ_1^{\gamma}W_1}]^2}{\E^{\RR}[e^{\vt q_n Z_1^{\gamma}W_1}]^2} \nonumber \\
	&\leq \frac{q_n^2 W_1^2(Z_1^{\gamma, \max})^2 \E^{\RR}[e^{\vt q_nZ_1^{\gamma}W_1}]^2}{\E^{\RR}[e^{\vt q_n Z_1^{\gamma}W_1}]^2} = q_n^2W_1^2(Z_1^{\gamma,\max})^2 .
\end{align}
Thus, it is again allowed to interchange limit and derivative. Moreover, 
we have that 
\begin{equation}
	\frac{d^2}{d\vt ^2}\left(C_1 \E_{\PP^{N_1N_2}}[\ln M_{c,1}^{\RR}(\vt)]\right) = C_1\E_{\PP^{N_1N_2}}\left[\frac{d^2}{d\vt ^2}\ln M_{c,1}^{\RR}(\vt)\right] > 0
\end{equation}
because $\ln M_{c,1}^{\RR}(\vt)$ is strictly convex. Thus, this summand is positive and, analogously, so is the second one. Therefore, Condition (\ref{2}) of Theorem \ref{abwthm} is satisfied. 

Next we  check Condition (\ref{3}) on the characteristic function. We have to take into account  that $Z_j^c$ and $Z_j^v$ should be lattice valued random 
variables because they represent numbers of peptides. 
The characteristic function is given by
\begin{align}
	 \left| \frac{\Phi_n^{\RR}(\vt+ it)}{\Phi_n^{\RR}(\vt)}\right|	= \left|\frac{\prod_{j=1}^{n_c}M_{c,j}^{\RR}(q_n(\vt+it))\prod_{j=n_c+1}^{n_c+n_v}M_{v,j}^{\RR}(q_n(\vt+it)) M^{\RR}(z_f(\vt +it))}{\prod_{j=1}^{n_c}M_{c,j}^{\RR}(q_n\vt)\prod_{j=n_c+1}^{n_c+n_v}M_{v,j}^{\RR}(q_n\vt)M^{\RR}(z_f\vt)}\right| .\label{momgenfct}
\end{align}	
We can rewrite \ref{momgenfct} as
\begin{align}	
	 & \textstyle \left(\exp\left(\frac {1}{n_c}\sum_{j=1}^{n_c} \ln \Big|\frac{M_{c,j}^{\RR}(q_n(\vt+it))}{M_{c,j}^{\RR}(q_n\vt)}\Big|\right)\right)^{n_c}\left(\exp\left(\tfrac {1}{n_v}\sum_{j=1}^{n_v} \ln \Big|\frac{M_{v,j}^{\RR}(q_n(\vt+it))}{M_{v,j}^{\RR}(q_n\vt)}\Big|\right)\right)^{n_v}\Big|\frac{M^{\RR}(z_f(\vt+it))}{M^{\RR}(z_f\vt)}\Big| \nonumber \\
	= & \exp \Biggl(n_c\left(\E_{\PP^{N_1N_2}}\left[\ln \Big|\tfrac{M_{c,1}^{\RR}(q_n(\vt +it))}{M_{c,1}^{\RR}(q_n\vt)}\Big|\right]+\so\right) \nonumber \\
	&+n_v\left(\E_{\PP^{N_1N_2}}\left[\ln \Big|\tfrac{M_{v,1}^{\RR}(q_n(\vt +it))}{M_{v,1}^{\RR}(q_n\vt)}\Big|\right]+\so\right)
	+\ln\Big|\tfrac{M^{\RR}(z_f(\vt+it))}{M^{\RR}(z_f\vt)}\Big|\Biggr) . 
\end{align}	
This expression can be bounded from above by	
\begin{align}
		&\exp\Biggl(n_c\left(\ln(1-\e)\PP^{N_1N_2}\left(\Big|\tfrac{M_{c,1}^{\RR}(q_n(\vt +it))}{M_{c,1}^{\RR}(q_n\vt)}\Big|\leq 1-\e\right)+\tilde{\e}\right)\nonumber \\
	&\qquad +n_v\left(\ln(1-\e)\PP^{N_1N_2}\left(\Big|\tfrac{M_{v,1}^{\RR}(q_n(\vt +it))}{M_{v,1}^{\RR}(q_n\vt)}\Big|\leq 1-\e\right)+\tilde{\e}\right)\Biggr).	\label{schoen.2}
\end{align}
 For given $\e>0$, the  probabilities 
\begin{equation}
\PP^{N_1N_2}\left(\left|\tfrac{M_{c,1}^{\RR}(q_n(\vt +it))}{M_{c,1}^{\RR}(q_n\vt)}\right|\leq 1-\e\right)\quad \text{ and } \quad \PP^{N_1N_2}\left(\Big|\tfrac{M_{v,1}^{\RR}(q_n(\vt +it))}{M_{v,1}^{\RR}(q_n\vt)}\Big|\leq 1-\e\right)
\end{equation}
 are strictly positive, uniformly in $n$ for $n$ large,  
due to the assumptions on the distribution of the stimulation rates.
Therefore, there exists $\d>0$, such that for all $n$ large enough, 
\eqref{schoen.2} is bounded from above by
\begin{equation}
\exp((n-1)(\tilde\e-\delta)).
\end{equation}
Since $\tilde \e$ can be made arbitrarily small if $n$ is large enough,
 $\delta -\tilde\e>0$ for such $n$, and so this expression tends to zero with 
$n$ exponentially fast. 
It is again a crucial point that $\vt_n(a,z_f)$ converges such that the supremum is
 taken over a compact set and Condition (\ref{3}) is satisfied.
\qed\end{proof}

\begin{remark}
Note that it  suffices in order  to check these conditions to assume $C_1+C_2 >0$. It is not necessary that both constants are strictly positive. 
\end{remark}

\paragraph{Investigation of the rate function}
We are concerned with the behaviour of the large deviation rate function and prove a functional central limit theorem with which we can characterize this.  $g_n(q_n\vt)$  defined by Equation \ref{g} converges to
\begin{align}
&C_1\E\left[\ln \E^{\RR}\left[e^{\vt Z_1^cW_1}\right]\right] + C_2\E\left[\ln \E^{\RR}\left[e^{\vt Z_1^vW_1}\right]\right] \equiv g(\vt).
\end{align}
In the rate function appears the process $Z_n(q_n\vt^n_0(a,z_f))$ which is defined by Equation \ref{Zn}. The following theorem states our result.
We use the short hand notation  $M_{c,1}^{\RR,a}\equiv \ln M_{c,1}^{\RR}(\vt_0(a))$, where $\vt_0(a)$ denotes the limit of $\vt^n_0(a,z_f)$, the solution of Equation \ref{theta0}.

\begin{theorem}\label{ratefctR}
If  there exists a constant $C$ such that $g_n''(q_n\vt^n_0(a,z_f))>C>0$, the rate function is given by 
\begin{align}
I_n^{\RR}(a,z_f) = &I^n_0(a,z_f) -\frac{1}{\sqrt n}Z_n(q_n\vt^n_0(a,z_f))-\frac{z_f}{n}\vt^n_0(a,z_f)W_f +R_n,
\end{align}
where $Z_n(q_n\vt_0^n(a,z_f))$
converges weakly to the Gaussian process $Z_{a} + \overline{Z}_{a}$ and $R_n\in \OO\left( \frac 1n \right)$.  $Z_{a}$ and $\overline{Z}_{a}$ are both Gaussian processes with expectation functions $\E[Z_{a}]=0=\E[\overline{Z}_{a}]$  and covariance functions
\begin{align}
\Cov (Z_{a},Z_{a '}) = C_1&\left( \E\left[M_{c,1}^{\RR,a}M_{c,1}^{\RR,a'}\right] -\E\left[M_{c,1}^{\RR,a}\right]\E\left[ M_{c,1}^{\RR,a'}\right]\right)
\end{align}
and
\begin{align}
\Cov (\overline{Z}_{a},\overline{Z}_{a '}) = C_2&\left( \E\left[M_{v,1}^{\RR,a}M_{v,1}^{\RR,a'}\right]
-\E\left[M_{v,1}^{\RR,a}\right]\E\left[M_{v,1}^{\RR,a'}\right]\right).
\end{align}
\end{theorem}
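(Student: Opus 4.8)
The plan is to decompose the claimed identity for $I_n^{\RR}(a,z_f)$ into an algebraic part and a probabilistic (weak-convergence) part, treating them separately. The algebraic part is a Taylor expansion of the rate function $\vt \mapsto a\vt - \frac1n\sum_j \ln M^{\RR}_{\cdot,j}(q_n\vt) - \frac{z_f}{n}\vt W_f$ around the \emph{deterministic} tilting parameter $\vt_0^n(a,z_f)$ defined by Equation \eqref{theta0}, i.e.\ the solution of $a - \frac{z_f}{n}W_f = \frac{d}{d\vt}g_n(q_n\vt)$. Writing $h_n(\vt) := \frac1n\big(\sum_{j=1}^{n_c}\ln M^{\RR}_{c,j}(q_n\vt) + \sum_{j=n_c+1}^{n_c+n_v}\ln M^{\RR}_{v,j}(q_n\vt)\big)$, we have $\E[h_n(\vt)] = g_n(q_n\vt)$ and $h_n(\vt) - g_n(q_n\vt) = \frac{1}{\sqrt n}Z_n(q_n\vt)$ by the definition \eqref{Zn}. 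The true (random) tilting parameter $\vt_n(a,z_f)$ solves $a - \frac{z_f}{n}W_f = h_n'(\vt)$, so it differs from $\vt_0^n(a,z_f)$ by a quantity of order $n^{-1/2}$, controlled by the hypothesis $g_n''(q_n\vt_0^n) > C > 0$ (ensuring the implicit-function inversion is uniformly Lipschitz). Plugging $\vt_n$ into $I_n^{\RR} = (a-\frac{z_f}{n}W_f)\vt_n - h_n(\vt_n)$ and Taylor-expanding both $h_n$ and the optimality equation around $\vt_0^n$, the first-order term vanishes by the defining equation for $\vt_0^n$, the leading correction is exactly $-\frac{1}{\sqrt n}Z_n(q_n\vt_0^n(a,z_f))$, and everything else is collected into $R_n$. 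One then checks $R_n \in \OO(n^{-1})$: the quadratic-in-$(\vt_n-\vt_0^n)$ terms are $\OO(n^{-1})$ since $(\vt_n-\vt_0^n)^2 = \OO(n^{-1})$ and the second derivatives $h_n''$, $g_n''$ are $\OO(1)$ (bounded using the same moment bounds on $q_n W_1 Z_1^{\gamma,\max}$ that appeared in the previous proof), while the discrepancy between $Z_n(q_n\vt_n)$ and $Z_n(q_n\vt_0^n)$ is controlled by equicontinuity of the $Z_n$ process, established as part of its tightness. Finally one restores the $\frac{z_f}{n}\vt_0^n W_f$ term: since $I_n^{\RR}$ above is written with $(a-\frac{z_f}{n}W_f)\vt_n$ whereas $I_0^n(a,z_f) = a\vt_0^n - g_n(q_n\vt_0^n)$ carries the full $a\vt_0^n$, the mismatch contributes precisely $-\frac{z_f}{n}\vt_0^n(a,z_f)W_f$, matching the stated formula.

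For the weak-convergence part one must show $Z_n(q_n\vt_0^n(a,z_f))$, viewed as a process indexed by $a$ (equivalently by the limiting tilting value $\vt_0(a)$), converges weakly to $Z_a + \overline Z_a$. I would first prove finite-dimensional convergence: for fixed $a_1,\dots,a_k$, the vector $\big(Z_n(q_n\vt_0^n(a_\ell,0))\big)_{\ell}$ is a normalized sum of two independent triangular arrays of row-wise i.i.d.\ centered random variables, $\frac{1}{\sqrt n}\sum_{j\le n_c}(\ln M^{\RR}_{c,j}(q_n\vt_0^n(a_\ell)) - \E[\cdots])$ and the analogous $v$-block sum. Since $n_c/n \to C_1$, $n_v/n \to C_2$, and $q_n\vt_0^n(a_\ell) \to \vt_0(a_\ell)$, and since the summands are bounded (hence the Lindeberg condition is immediate and $q_n$ can be replaced by its limit at cost $\so$ in each coordinate, using uniform continuity of $\vt \mapsto \ln M^{\RR}(\vt)$ on compacts together with dominated convergence for the expectations), the multivariate CLT gives a centered Gaussian limit. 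Its covariance is computed directly from the variance of a single summand: the $c$-block contributes $C_1\big(\E[\ln M^{\RR}_{c,1}(\vt_0(a))\ln M^{\RR}_{c,1}(\vt_0(a'))] - \E[\ln M^{\RR}_{c,1}(\vt_0(a))]\E[\ln M^{\RR}_{c,1}(\vt_0(a'))]\big)$, i.e.\ $\Cov(Z_a,Z_{a'})$ in the stated notation $M_{c,1}^{\RR,a} = \ln M_{c,1}^{\RR}(\vt_0(a))$, and the $v$-block contributes $\Cov(\overline Z_a,\overline Z_{a'})$; independence of the two blocks (the $c$- and $v$-peptide copy numbers are independent) makes the total covariance the sum, so the limit is distributionally $Z_a + \overline Z_a$ with the $Z$ and $\overline Z$ independent. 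To upgrade finite-dimensional convergence to weak convergence of processes I would verify tightness in $C$ (or the appropriate function space), via a moment bound of Kolmogorov type: $\E\big[(Z_n(q_n\vt_0^n(a)) - Z_n(q_n\vt_0^n(a')))^2\big] \le |\vt_0^n(a) - \vt_0^n(a')|^2 \cdot \sup_\vt \E[(h_n'(\vt))^2 \text{-type terms}] \le K|a-a'|^2$, using the mean-value theorem in $\vt$, the boundedness of the derivative of $\ln M^{\RR}$ (again via the $q_n W_1 Z_1^{\gamma,\max}$ bounds from the preceding proof), and the uniform Lipschitz dependence of $\vt_0^n$ on $a$ guaranteed by $g_n'' > C > 0$. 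Since the index set is a compact interval of $a$-values, this Kolmogorov criterion with exponent $2 > 1$ suffices for tightness, and combined with fidi convergence yields the claimed weak convergence.

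\textbf{The main obstacle} I anticipate is not any single CLT or Taylor step — each is routine given boundedness — but rather the careful bookkeeping needed to guarantee that \emph{every} error swept into $R_n$ is genuinely $\OO(n^{-1})$ rather than merely $\oh(n^{-1/2})$, and that the $\sqrt n \cdot Z_n$ term is evaluated at the right argument. The subtlety is that $I_n^{\RR}$ gets multiplied by $n$ in the large-deviation approximation, so a term of size $n^{-1/2}$ in the rate function is order $\sqrt n$ in the exponent (this is precisely the fluctuation $\sqrt n Z$ that must be kept), while a term of size $n^{-1}$ is $\OO(1)$ and harmlessly absorbed. One must therefore show: (a) $\vt_n(a,z_f) - \vt_0^n(a,z_f) = -\frac{1}{\sqrt n}\frac{Z_n'(q_n\vt_0^n)q_n}{g_n''(q_n\vt_0^n)q_n^2} + \OO(n^{-1})$ exactly, which is why Theorem \ref{ratefctR} hypothesizes the joint convergence of $Z_n$ \emph{and its derivatives} (so $Z_n'$ is $\OO_P(1)$); (b) substituting this into the quadratic remainder of the Taylor expansion of $I_n^{\RR}$ around $\vt_0^n$ produces, after the linear term cancels, a contribution of size $(\vt_n-\vt_0^n) \times [\text{first-order Taylor coefficient, which is } \OO(n^{-1/2})] = \OO(n^{-1})$ — so the apparent cross term of size $n^{-1/2} \times n^{-1/2}$ is indeed only $n^{-1}$; and (c) $Z_n(q_n\vt_n) - Z_n(q_n\vt_0^n) = Z_n'(q_n\vt_0^n)q_n(\vt_n - \vt_0^n) + \oh_P(n^{-1/2}) = \OO_P(n^{-1})$ as a \emph{correction to} $\frac{1}{\sqrt n}Z_n$, again using the joint derivative convergence. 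Assembling these while keeping $z_f/n$ either $\to 0$ or $\to C$ (the case split already made in the previous proof) is where the real work lies; once it is organized correctly, the stated formula with $R_n \in \OO(n^{-1})$ and the Gaussian limit $Z_a + \overline Z_a$ follow.
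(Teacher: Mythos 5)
Your proposal is correct and takes essentially the same approach as the paper: write the random tilting parameter as the deterministic $\vt_0^n$ plus a perturbation $\delta^n$ driven by $Z_n'$, Taylor-expand the rate function to second order around $\vt_0^n$ so the linear term cancels, collect the $\OO(n^{-1})$ quadratic remainders (controlled by the joint weak convergence of $Z_n,Z_n',Z_n''$ and $g_n''>C>0$), and establish the Gaussian limit by the Lindeberg CLT for triangular arrays together with the Kolmogorov-type second-moment bound for tightness. Your decomposition $h_n(\vt)=g_n(q_n\vt)+\tfrac{1}{\sqrt n}Z_n(q_n\vt)$ is exactly the paper's splitting of $\Psi_n^{\RR}$ into its mean and fluctuation parts, and your three bookkeeping points (a)--(c) correspond precisely to the paper's treatment of $\delta^n$ and the terms swept into $R_n$.
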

\begin{remark}
The remainder term is given by 
\begin{equation}
R_n = \frac{ (Z_n'(q_n\vt_0^n(a,z_f)))^2}{2n(g_n''(q_n\vt_0^n(a,z_f))+\frac 1{\sqrt n}Z_n''(q_n\vt_0^n(a,z_f)))}+\po\left(\frac 1n\right),
\end{equation}
where the appearing process scaled with $n$ converges weakly. Since we consider the regime $z_f\gg \sqrt n$ the term $\frac{z_f}{n}\vt^n_0(a,z_f)W_f$ is of a higher order than the remainder. 
\end{remark}

As we already mentioned in Section \ref{results} we need this approximation of the rate function on the level of the central limit theorem due to the scaling with the factor $n$ in the expression for the probabilities. 
In order to prove this result we show weak convergence of the involved random processes and derive then an expression for the rate function. To establish the weak convergence of $Z_n(q_n\vt^n_0(a,z_f)), Z_n'(q_n\vt^n_0(a,z_f))$ and $Z_n''(q_n\vt^n_0(a,z_f))$ as well as their joint weak convergence as processes on the Wiener Space with parameter $a$ we show convergence of their finite dimensional distributions and tightness. To prove tightness we use the Kolmogorov-Chentsov criterion from \cite{KAL}. Formulated to our scenario we have to check the conditions
\begin{enumerate}
\item $Z_n(q_n\vt_0^n(a,z_f))$ converges in finite dimensional distribution. \label{convfdd}
\item The family of initial distributions, $Z_n(q_n(\vt_0^n(\e,z_f)))$, is tight. \label{starttight}
\item There exists $C>0$ independent of $a$ and $n$ such that \label{tightness}
		\begin{equation}
		\E\left[\left( Z_n(q_n\vt_0^n(a+h,z_f)-Z_n(q_n\vt_0^n(a,z_f)\right)^2\right]\leq C|h|^2.
		\end{equation} 
\end{enumerate}
Note that Condition \ref{tightness} is fulfilled if
\begin{equation}\label{versiontightness}
\E\left[\left(Z_n'(q_n\vt_0^n(a,z_f))\right)^2\right]\leq C.
\end{equation}
The same criteria with $Z_n(q_n\vt_0^n(a,z_f))$ suitably replaced by the process under consideration can be used to prove the convergence of these processes.

We can handle the constitutive and the variable part separately. It suffices to check the conditions for the constitutive part because the sum in the variable part is built analogously.
The following results are taken from \cite{JS}. We need this central limit theorem for triangular arrays to check Condition \ref{convfdd}.
\begin{definition}
A \emph{row-wise independent $d$-dimensional triangular array scheme} is a sequence $(K^n)$ of elements of $\overline{\mathbb N}^*=\mathbb N \setminus \lbrace 0 \rbrace \cup \infty$ and a sequence of probability spaces $(\Omega^n,\mathcal F ^n, P^n)$  each of one being equipped with an independent sequence $(\chi _k ^n)_{1\leq k \leq K^n}$ of $\mathbb R^d$-valued random variables.
\end{definition}
We restrict the scenario to row-wise independent schemes which satisfy
\begin{equation}\label{A}
	\sum_{1\leq k \leq K^n} \left|\E\left[h\left(\chi_k^n\right)\right]\right|<\infty \text{ and } \sum_{1\leq k\leq K^n}\E\left[\left|\chi_k^n\right|^2\wedge 1\right]<\infty
\end{equation}
for each $n$, where $h$ is a given truncation function. This condition does not depend on $h\in \mathcal C _t^d \equiv \lbrace h:\mathbb R^d\to\mathbb R^d \text{ bounded, compact support, }h(x)=x \text{ in a neighbourhood of } 0 \rbrace$.

\begin{definition}
	A row-wise independent array $(\chi_k^n)$ satisfies the \emph{Lindeberg condition} if for all
	 $\e >0$ we have 
	\begin{equation}\label{Lind}
		\lim_{n\to\infty}\sum_{1\leq k\leq K^n} \E\left[|\chi_k^n|^2\mathds 1_{\lbrace |\chi_k^n|>\e \rbrace}\right] =0.
	\end{equation}
\end{definition}
Of course, this implies $\sum_k \E[|\chi_k^n|^2]<\infty$, provided Condition \ref{A} is satisfied.
\begin{theorem}\label{Lindeberg-Feller}
 We suppose that the $d$-dimensional row-wise independent array satisfies Condition \ref{A} and the Lindeberg condition, and let $\xi^n=\sum_{1\leq k \leq  K^n} \chi_k^n$. Then\\
 a) If $\mathcal L(\xi^n)\to \mu$, then $\mu$ is a Gaussian measure on $\mathbb R^d$;\\
 b) in order that $\mathcal L(\xi^n)\to \mathcal N(b,c)$, the Gaussian measure with mean $b$ and covariance matrix $c$, it is necessary and sufficient that the following two conditions hold:\\
 $[\beta]$ $\sum _{1\leq k\leq K^n} \E\left[\chi_k^n\right]\to b$ \\
 $[\gamma]$  $\sum_{1\leq k\leq K^n} \E\left[\chi_k^{n,j}\chi_k^{n,l}\right]\to c^{jl}$,\\
 where $\chi_k^{n,l}$ denotes the $l$-th component of $\chi_k^n$.
\end{theorem}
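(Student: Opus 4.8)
The plan is to recognise the statement as the multidimensional Lindeberg--Feller central limit theorem for infinitesimal, row-wise independent triangular arrays in the truncation-function formulation of \cite{JS}, and to run the classical characteristic-function argument, completed by the L\'evy continuity theorem. First I would fix a truncation function $h\in\mathcal C_t^d$ with $h(x)=x$ on $\{|x|\le a\}$ and compact support, and pass from $\xi^n$ to the \emph{bounded} sum $\tilde\xi^n\equiv\sum_k h(\chi_k^n)$. The Lindeberg condition \eqref{Lind} with $\e=a$ gives $a^2\sum_k\P(|\chi_k^n|>a)\le\sum_k\E[|\chi_k^n|^2\1_{\{|\chi_k^n|>a\}}]\to0$, so $\P(\xi^n\ne\tilde\xi^n)\to0$ and $\xi^n$, $\tilde\xi^n$ have the same weak limits. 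A second use of \eqref{Lind} (splitting $\E[|h(\chi_k^n)|^2]$ at a level $\e\le a$) yields the uniform asymptotic negligibility $\max_k\E[|h(\chi_k^n)|^2]\to0$, and together with Condition \eqref{A} and $[\gamma]$ one obtains $\sup_n\sum_k\E[|h(\chi_k^n)|^2]<\infty$. Finally I would pass to the centered summands $\eta_k^n\equiv h(\chi_k^n)-\E[h(\chi_k^n)]$ (their means exist by \eqref{A}), absorbing the correction $\sum_k\E[h(\chi_k^n)]$ into the drift; by \eqref{Lind} this correction has the same limit as $\sum_k\E[\chi_k^n]$, and both \eqref{A} and \eqref{Lind} survive the recentering.

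\emph{Sufficiency.} By independence, $\E[e^{i\langle t,\tilde\xi^n\rangle}]=\prod_k(1+z_k^n)$ with $z_k^n=-\tfrac12\E[\langle t,\eta_k^n\rangle^2]+r_k^n$ (there is no linear term because $\E[\eta_k^n]=0$) and $|r_k^n|\le\E[\min(|\langle t,\eta_k^n\rangle|^3,\langle t,\eta_k^n\rangle^2)]$. The crucial estimate is to split the bound on $r_k^n$ according to $|\eta_k^n|\le\delta$ or $|\eta_k^n|>\delta$: the first part sums to at most $|t|^3\delta\sup_n\sum_k\E[|\eta_k^n|^2]$, while the second is $\so$ because $\{|\eta_k^n|>\delta\}\subseteq\{|\chi_k^n|>\delta'\}$ for a suitable $\delta'>0$ (as $h$ is continuous with $h(0)=0$ and the centering correction is uniformly small), so \eqref{Lind} applies; letting $n\to\infty$ and then $\delta\to0$ gives $\sum_k r_k^n\to0$. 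Uniform negligibility gives $\max_k|z_k^n|\to0$, and since $\sum_k|z_k^n|$ is bounded (as $\sum_k\E[|\eta_k^n|^2]$ is) one has $\sum_k|z_k^n|^2\to0$, so $\prod_k(1+z_k^n)=\exp(\sum_k z_k^n)(1+\so)$. Now $[\gamma]$ identifies $\lim_n\sum_k\E[\langle t,\eta_k^n\rangle^2]=\langle t,ct\rangle$ and $[\beta]$ pins the limiting drift as $b$; combined with the decomposition $\xi^n=\sum_k\eta_k^n+\sum_k\E[h(\chi_k^n)]+\sum_k(\chi_k^n-h(\chi_k^n))$, in which the last term tends to $0$ in probability, this gives $\E[e^{i\langle t,\xi^n\rangle}]\to e^{i\langle t,b\rangle-\frac12\langle t,ct\rangle}$ for every $t\in\R^d$, and L\'evy continuity concludes.

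\emph{Statement (a) and necessity.} The Taylor computation above is valid for every $t$, so any weak limit $\mu$ of $\mathcal L(\xi^n)$ — equivalently of $\mathcal L(\tilde\xi^n)$ — satisfies $\widehat\mu(t)=\lim_n\exp(i\langle t,\sum_k\E[h(\chi_k^n)]\rangle-\tfrac12\sum_k\E[\langle t,h(\chi_k^n)\rangle^2])$; taking imaginary parts with $t=e_j$ and real parts after polarization shows that $\sum_k\E[h(\chi_k^n)]$ and the symmetric matrices $(\sum_k\E[h^jh^l(\chi_k^n)])_{j,l}$ converge, to some $b$ and some positive semidefinite $c$, whence $\widehat\mu(t)=e^{i\langle t,b\rangle-\frac12\langle t,ct\rangle}$ and $\mu=\mathcal N(b,c)$, which is (a); alternatively one invokes that limits of infinitesimal arrays are infinitely divisible and that \eqref{Lind} forces the L\'evy measure to vanish. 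For necessity in (b), if $\mathcal L(\xi^n)\to\mathcal N(b,c)$ the same identification forces those two sequences of sums to reproduce the log-characteristic function $i\langle t,b\rangle-\tfrac12\langle t,ct\rangle$ for all $t$, hence to converge to $b$ and $c$; passing back from $h(\chi_k^n)$ to $\chi_k^n$ via \eqref{Lind} yields exactly $[\beta]$ and $[\gamma]$.

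\emph{Main obstacle.} The technical heart is the remainder control in the sufficiency step: the cubic-versus-quadratic split at level $\delta$ is precisely where \eqref{Lind} is used, and the attendant two-stage limit, together with the negligibility estimate $\max_k\E[|h(\chi_k^n)|^2]\to0$, is the delicate part. A secondary point is that the conclusion must not depend on the choice of $h$: replacing $h$ by another $h'\in\mathcal C_t^d$ changes $\sum_k\E[h(\chi_k^n)]$ by $\sum_k\E[(h-h')(\chi_k^n)]$, which vanishes by \eqref{Lind} since $h-h'$ is supported away from the origin, so $[\beta]$, $[\gamma]$ and the limit law are unaffected. In the present paper the arrays that one feeds into this theorem are bounded, centered and scaled by $1/\sqrt n$, so negligibility and the centering reduction are immediate, which is why Theorem \ref{Lindeberg-Feller} is comfortable to apply in Section \ref{proofs}.
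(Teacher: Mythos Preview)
The paper does not prove this theorem at all: it is quoted verbatim from Jacod and Shiryaev \cite{JS} as a tool (``The following results are taken from \cite{JS}'') and then applied in Lemma~\ref{fddR}. So there is nothing to compare against on the paper's side; your proposal supplies strictly more than the paper does.

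As a self-contained sketch of the classical Lindeberg--Feller argument your outline is essentially sound, and you correctly identify the cubic-versus-quadratic remainder split as the place where the Lindeberg condition does the work. Two points deserve tightening. First, in your treatment of part~(a) you write that any weak limit $\mu$ satisfies $\widehat\mu(t)=\lim_n\exp\bigl(i\langle t,\sum_k\E[h(\chi_k^n)]\rangle-\tfrac12\sum_k\E[\langle t,h(\chi_k^n)\rangle^2]\bigr)$ and then read off convergence of the drift and covariance sums from this; but the passage $\prod_k(1+z_k^n)=\exp(\sum_k z_k^n)(1+\so)$ you used earlier relied on $\sup_n\sum_k\E[|\eta_k^n|^2]<\infty$, which in the sufficiency step you obtained from $[\gamma]$. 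In part~(a) you do not have $[\gamma]$, so this boundedness is not available a priori and the exponential form of $\widehat\mu$ is not yet justified. The clean route is the alternative you mention in passing: infinitesimality of the array forces any limit to be infinitely divisible, and then the Lindeberg condition kills the L\'evy measure, leaving a Gaussian. Second, your inclusion $\{|\eta_k^n|>\delta\}\subseteq\{|\chi_k^n|>\delta'\}$ needs the centering correction $\E[h(\chi_k^n)]$ to be uniformly small in $k$; this does follow from uniform asymptotic negligibility, but it is worth making explicit that it holds only for $n$ large. Neither issue is fatal, and for the bounded, centred, $n^{-1/2}$-scaled arrays actually used in Section~\ref{proofs} both points are trivial, as you note.
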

Using this theorem we can prove the following lemmata which we need to prove Theorem \ref{ratefctR}.
\begin{lemma}\label{R}
$Z_n(q_n\vt^n_0(a,z_f))$ as a process on the Wiener Space with parameter $a$ converges weakly to a Gaussian process if there exists a constant $C$ such that $g_n''(q_n\vt^n_0(a,z_f))>C>0$.
\end{lemma}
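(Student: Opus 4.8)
The plan is to establish the weak convergence of $Z_n(q_n\vt^n_0(\cdot,z_f))$, as a process in the parameter $a$, by verifying the three conditions \ref{convfdd}--\ref{tightness}: convergence of the finite-dimensional distributions, tightness of the initial law, and the Kolmogorov--Chentsov bound \ref{versiontightness} on the increments. Since $Z_n$ splits into the constitutive sum over $j\le n_c$ and the independent variable sum over $n_c<j\le n_c+n_v$, both built the same way, I would carry out every step for the constitutive part $Z_n^c$ alone and then combine the two blocks by independence; that independence also accounts for the splitting of the limiting covariance into $\Cov(Z_a,Z_{a'})+\Cov(\overline{Z}_a,\overline{Z}_{a'})$.

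For the finite-dimensional distributions I would fix parameters $a_1,\dots,a_m$ and apply the Lindeberg--Feller theorem \ref{Lindeberg-Feller} to the $\R^m$-valued, row-wise independent triangular array whose $j$-th row is $n^{-1/2}$ times the vector of centered terms $\ln M_{c,j}^{\RR}(q_n\vt_0^n(a_\ell,z_f))-\E[\ln M_{c,j}^{\RR}(q_n\vt_0^n(a_\ell,z_f))]$, $\ell=1,\dots,m$. Because $Z_1^c$ and $W_1$ are bounded and $\vt_0^n(a,z_f)$ converges, so that the evaluation points lie in a fixed compact set, the rows are uniformly of order $n^{-1/2}$; hence Condition \ref{A} and the Lindeberg condition \ref{Lind} are immediate, and \ref{Lindeberg-Feller} delivers a centered Gaussian limit whose covariance is the limit of $\tfrac{n_c}{n}\big(\E[\ln M_{c,1}^{\RR}(q_n\vt_0^n(a_k,z_f))\ln M_{c,1}^{\RR}(q_n\vt_0^n(a_\ell,z_f))]-\E[\cdot]\E[\cdot]\big)$, which by $n_c/n\to C_1$, $q_n\to1$, $\vt_0^n(a,z_f)\to\vt_0(a)$ and dominated convergence equals $\Cov(Z_{a_k},Z_{a_\ell})$. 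The one delicacy is that the evaluation point $\vt_0^n(a,z_f)$ is itself random through $W_f$; I would handle this by conditioning on $W_f$, which is independent of the summands of $Z_n^c$, and using that in the regime $z_f/n\to0$ the limit $\vt_0(a)$ solves the deterministic equation $a=\tfrac{d}{d\vt}g(\vt)$ and hence does not depend on $W_f$, so the conditional, and therefore the unconditional, Gaussian limit is the same for every value of $W_f$. Condition \ref{starttight}, tightness of the law of $Z_n^c(q_n\vt_0^n(\e,z_f))$, is then immediate from this one-dimensional convergence.

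The real work is the Kolmogorov--Chentsov bound, which by \ref{versiontightness} reduces to a bound on $\E\big[\big(\tfrac{d}{da}Z_n^c(q_n\vt_0^n(a,z_f))\big)^2\big]$ uniform in $a$ and $n$. By the chain rule this derivative is $q_n\,\tfrac{d}{da}\vt_0^n(a,z_f)\cdot (Z_n^c)'(q_n\vt_0^n(a,z_f))$, and I would control the two factors separately. Implicit differentiation of the defining relation \ref{theta0} gives $\tfrac{d}{da}\vt_0^n(a,z_f)=\big(q_n^2\,g_n''(q_n\vt_0^n(a,z_f))\big)^{-1}$, and \emph{this is the only point where the standing hypothesis $g_n''(q_n\vt_0^n(a,z_f))>C>0$ enters}: it keeps this factor bounded uniformly, i.e.\ it prevents the curvature of the rate function from degenerating along the sequence. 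The other factor $(Z_n^c)'(\vt)=n^{-1/2}\sum_{j\le n_c}\big(\tfrac{d}{d\vt}\ln M_{c,j}^{\RR}(\vt)-\E[\cdot]\big)$ is a normalized sum of independent centered terms, each bounded because the $\vt$-derivative of $\ln M_{c,j}^{\RR}$ is a conditional tilted mean of $Z_j^cW_j$ and hence at most $Z_1^{c,\max}\,\|W_1\|_\infty$; so its second moment is of order $n_c/n$, uniformly over the compact $\vt$-range. Multiplying the two bounds gives the required uniform estimate, and then the fundamental theorem of calculus together with Cauchy--Schwarz turns it into $\E\big[(Z_n^c(q_n\vt_0^n(a+h,z_f))-Z_n^c(q_n\vt_0^n(a,z_f)))^2\big]\le C|h|^2$, i.e.\ Condition \ref{tightness}. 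The three conditions and the Kolmogorov--Chentsov criterion of \cite{KAL} then yield tightness, and with the finite-dimensional convergence this gives weak convergence of $Z_n^c(q_n\vt_0^n(\cdot,z_f))$ to a Gaussian process; the variable block is handled identically and the two are combined by independence. The main obstacle is exactly this uniform control of the derivative of the composed map $a\mapsto Z_n(q_n\vt_0^n(a,z_f))$, where the implicit-function factor $\tfrac{d}{da}\vt_0^n(a,z_f)$ must be kept bounded via the hypothesis on $g_n''$; the secondary subtlety, the random evaluation point $\vt_0^n(a,z_f)$, is dealt with by the conditioning argument above.
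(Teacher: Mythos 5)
Your proposal is correct and follows essentially the same route as the paper: Lindeberg--Feller for the finite-dimensional distributions, then Kolmogorov--Chentsov tightness reduced via \eqref{versiontightness} to a uniform second-moment bound on the $a$-derivative of the composed map, with the hypothesis $g_n''>C>0$ entering exactly through the implicit-function-theorem formula for $\tfrac{d}{da}\vt_0^n(a,z_f)$. The one place where you are more careful than the paper is in flagging and handling (by conditioning on $W_f$) the fact that the evaluation point $\vt_0^n(a,z_f)$ is itself random through $W_f$, so that the summands are only conditionally independent for finite $n$; the paper glosses over this, whereas your conditioning argument makes the application of the triangular-array CLT cleaner.
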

In order to simplify the notation we define 
\begin{equation}
Y_{a,j}^n\equiv\ln \E^{\RR}\left[e^{q_n \vt^n_0(a,z_f)Z_j^cW_j}\right]- \E\left[\ln \E^{\RR}\left[e^{q_n \vt^n_0(a,z_f)Z_j^cW_j}\right]\right].
\end{equation}
  The constitutive part of the process $Z_n(q_n\vt^n_0(a,z_f))$ is given by $Z_{n,c}(q_n\vt^n_0(a,z_f))\equiv \frac{1}{\sqrt n}\sum_{j=1}^{n_c}Y_{a,j}^n$. To prove this lemma we have to check Conditions \ref{convfdd}, \ref{starttight}, and \ref{tightness}.
First we investigate the finite dimensional distributions of $Z_{n,c}$ in the following Lemma \ref{fddR}. Therefore, let  $0<a_1<\dots<a_m<\infty$, $a_i \in \mathbb R, m\in \mathbb N$.
We are interested in the limiting behaviour of $\xi^n \equiv \sum_{1\leq j \leq K^n} \chi_j^n$ with $\chi_j^n \equiv  \frac {1}{\sqrt n}(Y_{a_1,j}^n,\dots,Y_{a_m,j}^n)
$ and $j \in \lbrace 1,\dots,n_c \rbrace$. 
\begin{lemma}\label{fddR}
	Under the assumptions of Lemma \ref{R}, $\xi^n\equiv \sum_{j=1}^{n_c} \chi_j^n$ converges weakly to a Gaussian vector with expectation $0$ and covariance matrix defined by 
	\begin{equation}
	C^{jl}=C_1\left(\mathbb E\left[M_{c,1}^{\RR,a_j} M_{c,1}^{\RR,a_l}\right]-\mathbb E\left[M_{c,1}^{\RR,a_j}\right]\mathbb E\left[M_{c,1}^{\RR,a_l}\right]\right).
	\end{equation}
\end{lemma}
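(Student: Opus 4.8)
The plan is to apply the Lindeberg--Feller central limit theorem for triangular arrays (Theorem \ref{Lindeberg-Feller}) to the $\mathbb R^m$-valued array $\chi_j^n\equiv n^{-1/2}(Y_{a_1,j}^n,\dots,Y_{a_m,j}^n)$, $1\le j\le n_c$. First I would verify that the array is row-wise independent: for fixed $n$, the $Z_j^c$ with $j\in\{1,\dots,n_c\}$ are i.i.d.\ and independent of $\RR$, and $\chi_j^n$ is a (deterministic, once $\vt_0^n(a_i,z_f)$ is fixed) function of $Z_j^c$ together with $\RR$-measurable quantities $W_j$; hence the rows are independent, and the array is moreover row-wise identically distributed within the constitutive block. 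The key quantitative input is that the summands are uniformly bounded: since $Z_1^c$ is bounded by $Z_1^{c,\max}$ and $W_1$ has bounded support, there is a deterministic constant $B<\infty$ (depending only on $m$, the $a_i$, the support bounds, and an upper bound on $\vt_0^n(a_i,z_f)$, which exists because these solutions converge by the argument used in the proof for Case 2.)) with $|Y_{a_i,j}^n|\le B$ for all $i,j,n$. This immediately gives Condition \ref{A} (the truncated first absolute moments and the $|\cdot|^2\wedge 1$ sums are each $O(n_c/n)=O(1)$) and, since $\sup_j|\chi_j^n|\le \sqrt m\,B/\sqrt n\to 0$, for any $\e>0$ the indicator $\mathds 1_{\{|\chi_j^n|>\e\}}$ vanishes for all $n$ large, so the Lindeberg condition \eqref{Lind} holds trivially.

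Next I would identify the limiting mean and covariance. By construction $\E[Y_{a_i,j}^n]=0$, so $[\beta]$ gives limiting mean $0$. For $[\gamma]$, by the row-wise identical distribution within the block,
\begin{equation}
\sum_{j=1}^{n_c}\E\bigl[\chi_j^{n,i}\chi_j^{n,l}\bigr]=\frac{n_c}{n}\,\E\bigl[Y_{a_i,1}^nY_{a_l,1}^n\bigr]
=\frac{n_c}{n}\Bigl(\E\bigl[\ln M_{c,1}^{\RR}(q_n\vt_0^n(a_i,z_f))\ln M_{c,1}^{\RR}(q_n\vt_0^n(a_l,z_f))\bigr]-\E[\cdots]\E[\cdots]\Bigr).
\end{equation}
Now $n_c/n\to C_1$, $q_n\to 1$, and $\vt_0^n(a_i,z_f)\to\vt_0(a_i)$; since $\ln M_{c,1}^{\RR}(\vt)$ is, together with its $\vt$-derivative, uniformly bounded on the relevant compact $\vt$-range (same bounds as above), the integrand converges boundedly and dominated convergence yields the limit
\begin{equation}
C^{il}=C_1\Bigl(\E\bigl[M_{c,1}^{\RR,a_i}M_{c,1}^{\RR,a_l}\bigr]-\E\bigl[M_{c,1}^{\RR,a_i}\bigr]\E\bigl[M_{c,1}^{\RR,a_l}\bigr]\Bigr),
\end{equation}
with the notation $M_{c,1}^{\RR,a}\equiv\ln M_{c,1}^{\RR}(\vt_0(a))$. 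Part b) of Theorem \ref{Lindeberg-Feller} then gives $\mathcal L(\xi^n)\to\mathcal N(0,C)$, which is the claim. (Part a) is what guarantees that this matrix $C$ is indeed a legitimate covariance matrix, i.e.\ positive semidefinite.)

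The main obstacle I anticipate is not any single estimate but the bookkeeping needed to make the uniform-boundedness constant genuinely independent of $n$ and of the indices $a_1,\dots,a_m$ within the fixed finite collection: this rests on the convergence of $\vt_0^n(a_i,z_f)$ (hence their uniform boundedness over $n$), which in turn uses the hypothesis $g_n''(q_n\vt_0^n(a,z_f))>C>0$ of Lemma \ref{R} to guarantee the implicit solution is well-defined and stable, plus the earlier observation that the defining equation \eqref{theta0} converges almost surely. Once that constant is in hand, the verification of Condition \ref{A} and the Lindeberg condition is routine, and the covariance computation is a bounded-convergence argument. A minor point to handle carefully is that the process is defined on the conditional space given $\RR$, so one should state clearly that all expectations $\E[\cdot]$ here are over $\PP^{N_1N_2}$ (equivalently, integrate out the randomness of $\RR$) — the array itself, for fixed $n$, lives on $(\Omega,\FF,\P)$ with the $Z_j^c$ supplying the independence across $j$.
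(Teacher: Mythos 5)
Your proposal is correct and follows essentially the same route as the paper: apply Theorem \ref{Lindeberg-Feller} to the triangular array $\chi_j^n$, use the uniform bound $|\chi_j^n|\le 2\sqrt{m/n}\,K$ coming from the boundedness of $Z_1^c$, $W_1$ and the restriction to a compact $\vt$-range to get Condition \ref{A} and (trivially) the Lindeberg condition, obtain $[\beta]$ from centering, and obtain $[\gamma]$ by dominated convergence using $n_c/n\to C_1$, $q_n\to1$ and $\vt_0^n(a_i,z_f)\to\vt_0(a_i)$. The one small imprecision — claiming $\chi_j^n$ is ``a function of $Z_j^c$ together with $W_j$'' when in fact $Z_j^c$ is already integrated out in $\E^{\RR}$, so $Y_{a,j}^n$ depends only on $W_j$ (and, through $\vt_0^n$, on the independent $W_f$) — does not affect the row-wise independence conclusion or the rest of the argument.
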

\begin{proof}
We show that Theorem \ref{Lindeberg-Feller} is applicable in this case. We have
\begin{equation}\label{boundR}
	|\chi_j^n|^2=\sum_{i=1}^m \left(\frac{1}{\sqrt n}\ln M_{c,j}^{\RR}(q_n\vt^n_0(a_i,z_f))-\mathbb E\left[\frac{1}{\sqrt n}	\ln M_{c,j}^{\RR}(q_n\vt^n_0(a_i,z_f))\right]\right)^2 \leq \frac {4m}n K^2
\end{equation}
where $K$ is the global constant bounding each $M_{c,j}^{\RR}$ for $\vt \in (0,\vt_{**})$, independent of $j$. We have to check that 
the Lindeberg condition \ref{Lind} is satisfied. Since $|\chi_j^n|\leq 2\sqrt{\frac{m}{n}}K$  there exists for each $\e >0$ $n_0\in \mathbb N$ such that $|\chi_j^n|<\e$ for all $n\geq n_0$. Therefore, each summand is $0$ for $n\geq n_0$ and thus also the sum and the 
limit vanish. Part $1$ of Condition \ref{A} is satisfied because we consider centered random variables. Part $2$  holds true due to
\begin{align}
\sum_{j=1}^{n_c}\E\left[|\chi_j^n|^2\right]\leq \max \sum_{j=1}^{n_c}|\chi_j^n|^2\leq n_c\frac{4 mK^2}{n}\leq 4mK^2<\infty
\end{align}
according to \ref{boundR}. There can only appear finitely many summands which are equal to $1$.  Condition $[\beta]$ of  Theorem \ref{Lindeberg-Feller} is satisfied because each $\chi_j^n$ has expectation $0$ due to the construction.
Condition $[\gamma]$ is satisfied since 
\begin{align}
 \sum_{j=1}^{n_c} \mathbb E \left[\chi_j^{n,k}\chi_j^{n,l}\right] 
= & \frac{n_c}{n} \Biggl(\mathbb E \left[\ln M_{c,1}^{\RR}(q_n\vt^n_0(a_k,z_f))\ln M_{c,1}^{\RR}(q_n\vt^n_0(a_l,z_f))\right]\nonumber \\
&\quad -\mathbb E\left[\ln M_{c,1}^{\RR}(q_n\vt^n_0(a_k,z_f))\right]\mathbb E\left[\ln M_{c,1}^{\RR}(q_n\vt^n_0(a_l,z_f))\right]\Biggr).
\end{align}
Letting now $n$ tend to infinity we obtain
\begin{align}
&C_1 \left(\mathbb E\left[M_{c,1}^{\RR,a_j} M_{c,1}^{\RR,a_l}\right]-\mathbb E\left[ M_{c,1}^{\RR,a_j}\right]\mathbb E\left[ M_{c,1}^{\RR,a_l}\right]\right).
\end{align}
Limit and integral are interchangeable because dominated convergence is applicable due to the boundedness of the logarithmic moment generating functions for $\vt\in (0,\vt_{**})$.
\qed\end{proof}
To complete the proof of Lemma \ref{R} we need to  prove 
tightness. To do so, we use, as usual,  the 
 Kolmogorov-Chentsov criterion \cite{KAL} and check the Conditions \ref{starttight} and \ref{versiontightness}. 
\begin{proof}[Proof of Lemma \ref{R}] 
The family of initial distributions is given by the random variables evaluated in $\vt^n_0(\e,z_f)$ for an $\e >0$ because  $a>n^{-1}\E^{\RR}[G_n(z_f)]>0$. This family is seen to be  tight using Chebychev's inequality:
\begin{align}
&  \P\left(\frac{1}{\sqrt n}\sum_{j=1}^{n_c}\left(\ln M_{c,j}^{\RR}(q_n\vt^n_0(\e,z_f))-\E\left[\ln M_{c,j}^{\RR}(q_n\vt^n_0(\e,z_f))\right]\right)\geq K\right)\nonumber \\
\leq& \frac{\tfrac 1n \sum_{j=1}^{n_c}\Va [\ln M_{c,j}^{\RR}(q_n\vt^n_0(\e,z_f))]}{K^2}=\frac{n_c}{n}\frac{\Va[\ln M^{\RR}_{c,1}(q_n\vt^n_0(\e,z_f))]}{K^2}.
\end{align}
With $\frac{n_c}{n}\to C_1$ and $q_n\to 1$ 
exist $\d,\wb{\d}$ and $n_0$ such that $\frac{n_c}{n}\leq C_1+\d$ and $q_n\vt^n_0(\e,z_f)\leq\vt^n_0(\e,z_f)+\wb{\d}$ for all $n\geq n_0$. Thus, 
\begin{equation}
\frac{n_c}{n}\Va[\ln M^{\RR}_{c,j}(q_n\vt^n_0(\e,z_f))]\leq(C_1+\d)\Va[\ln M_{c,1}^{\RR}(\vt^n_0(\e,z_f)+\wb{\d}))]
\end{equation}
 for all $n\geq n_0$ and we obtain 
\begin{align}\label{start1}
& \P \left(\frac{1}{\sqrt n}\sum_{j=1}^{n_c}(\ln M_{c,j}^{\RR}(q_n\vt^n_0(\e,z_f))-\E[\ln M_{c,j}^{\RR}(q_n\vt^n_0(\e,z_f))])\geq K\right) \nonumber \\
\leq &K^{-2}\max\left\lbrace\max_{i\in 1,\dots,n_0-1}\tfrac{n_c(i)}{i}\Va\left[Y_{\e,j}^i\right],(C_1+\d)\Va\left[\ln M_{c,1}^{\RR}(\vt^n_0(\e,z_f)+\wb{\d})\right]\right\rbrace.
\end{align}
For each $\wt{\e}$ we can choose $K$ large enough such that \ref{start1} $<\wt{\e}$ and thus we have proven tightness of the initial  distributions. 

It remains to check Condition \ref{versiontightness}. We have
\begin{align}
&\E\left[\left(\frac{d}{da}Z_{n,c}(q_n\vt^n_0(a,z_f))\right)^2\right]= \E\left[\left(\frac{1}{\sqrt n}\sum_{j=1}^{n_c}\frac{d}{da}Y_{a,j}^n\right)^2\right] \nonumber \\
= &{\frac 1n \left(\E\left[\sum_{j=1}^{n_c}\left(\frac{d}{da}Y_{a,j}^n\right)^2\right]+ \sum_{j=1}^{n_c}\sum_{i=1,i\neq j}^{n_c}\underbrace{\E\left[\tfrac{d}{da}Y_{a,j}^n\tfrac{d}{da}Y_{a,i}^n\right]}_{=0}\right) =\frac{n_c}{n} \E\left[\left(\frac{d}{da}Y_{a,j}^n\right)^2\right]}
\end{align}
because $Y_{a,j}^n$ and thus $\frac{d}{da}Y_{a,j}^n$ are centered  i.i.d. random variables. They are independent for different $j$ because $\vt^n_0(a,z_f)$ depends only on the random variable $W_f$ by definition. Thus, it is enough to show that $\E\left[(\frac{d}{da}Y_{a,j}^n)^2\right]$ is bounded. Since $\E[X]$ and thus $X-\E[X]$ are bounded if the random variable $X$ is bounded it suffices in our scenario to show boundedness of the uncentered random variable. 
Let $X_{a,j}^n\equiv \vt^n_0(a,z_f)Z_j^cW_j$. Then
\begin{align}
0\leq&\frac{d}{da}\left(\ln \E^{\RR}\left[e^{q_nX_{a,j}^n}\right]\right)=\frac{\E^{\RR}\left[q_n\frac{d}{da}X_{a,j}^ne^{q_nX_{a,j}^n}\right]}{\E^{\RR}\left[e^{q_nX_{a,j}^n}\right]}\nonumber \\
\leq &q_n\max_a\left(\frac{d}{da}X_{a,j}^n\right) \frac{\E^{\RR}\left[e^{q_nX_{a,j}^n}\right]}{\E^{\RR}\left[e^{q_nX_{a,j}^n}\right]} \leq q_nZ_1^{c,\max}W_1^{\max}\max_a \frac{d}{da}\left(\vt^n_0(a,z_f)\right).
\end{align}
We know by an application of the implicit function theorem that 
\begin{equation}
\frac {d}{da}\vt^n_0(a,z_f)=\left(g_n''(q_n\vt^n_0(a,z_f))\right)^{-1}.
\end{equation}
Thus, we get tightness if $g_n''(q_n\vt^n_0(a,z_f))>C>0$.
\qed\end{proof}
With analogous calculations we get the convergence  of the derivatives of $Z_n(q_n\vt^n_0(a,z_f))$.
\begin{lemma}
	The processes $Z_n'(q_n\vt^n_0(a,z_f))$ and $Z_n''(q_n\vt^n_0(a,z_f))$ as processes on the Wiener Space with parameter $a$ converge weakly if there exists $C>0$ such that $g_n''(q_n\vt^n_0(a,z_f))>C>0$.
\end{lemma}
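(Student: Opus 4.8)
The plan is to run, for each of $Z_n'(q_n\vt^n_0(a,z_f))$ and $Z_n''(q_n\vt^n_0(a,z_f))$, the same three-step argument (finite-dimensional convergence via Theorem~\ref{Lindeberg-Feller}, tightness of the initial law, and the Kolmogorov--Chentsov moment bound) that was used for $Z_n(q_n\vt^n_0(a,z_f))$ in Lemma~\ref{R}. Differentiating the generic summand $Y^n_{a,j}$ of $Z_{n,c}$ in the parameter $a$ produces, by the chain rule, factors of $\frac{d}{da}\vt^n_0(a,z_f)$; by the implicit function theorem applied to Equation~\ref{theta0} this equals $(g_n''(q_n\vt^n_0(a,z_f)))^{-1}$, which is bounded uniformly in $n$ and $a$ precisely because of the hypothesis $g_n''(q_n\vt^n_0(a,z_f))>C>0$. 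Repeated differentiation brings in $\frac{d^2}{da^2}\vt^n_0$, $\frac{d^3}{da^3}\vt^n_0$, and so on, which are rational expressions in $g_n'',g_n''',\dots$ evaluated at $q_n\vt^n_0$; every numerator is bounded since $Z_j^\gamma$ and $W_j$ are bounded and the conditional moment generating functions are uniformly bounded away from $0$ and $\infty$ on $(0,\vt_{**})$, and every denominator is a positive power of $g_n''$, bounded below. Hence $\frac{d}{da}Y^n_{a,j}$ and $\frac{d^2}{da^2}Y^n_{a,j}$ are again centered, i.i.d. in each block (they depend on $\RR$ only through $W_f$, as before), and deterministically bounded by a constant times $n^{-1/2}$.

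Given these bounds I would check the hypotheses of Theorem~\ref{Lindeberg-Feller} exactly as in the proof of Lemma~\ref{fddR}: Condition~\ref{A} and the Lindeberg condition~\ref{Lind} hold because the arrays are centered and uniformly $\OO(n^{-1/2})$, so only finitely many truncated terms survive and the Lindeberg sum vanishes; condition $[\beta]$ holds by centering; and condition $[\gamma]$ follows by computing $\sum_{j=1}^{n_c}\E[\,\cdot\,]$ of the products of the differentiated summands and passing to the limit by dominated convergence. This yields convergence of the finite-dimensional distributions to a centered Gaussian law whose covariance is the corresponding $a$-derivative, resp. second $a$-derivative in the variables $a,a'$, of the kernel $\Cov(Z_{a},Z_{a'})+\Cov(\overline{Z}_{a},\overline{Z}_{a'})$ of Theorem~\ref{ratefctR}.

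For tightness I would again appeal to the Kolmogorov--Chentsov criterion of \cite{KAL} in the reduced form~\ref{versiontightness}. The initial laws $Z_n'(q_n\vt^n_0(\e,z_f))$, resp. $Z_n''(q_n\vt^n_0(\e,z_f))$, are tight by the same Chebyshev bound used in the proof of Lemma~\ref{R}, now applied to $\Va[\frac{d}{da}\ln M^\RR_{c,1}(q_n\vt^n_0(\e,z_f))]$, resp. $\Va[\frac{d^2}{da^2}\ln M^\RR_{c,1}(q_n\vt^n_0(\e,z_f))]$, which are uniformly bounded in $n$. The moment condition then reduces to boundedness, uniformly in $a$ and $n$, of $\E[(\frac{d^2}{da^2}Y^n_{a,j})^2]$, resp. $\E[(\frac{d^3}{da^3}Y^n_{a,j})^2]$, which is exactly the content of the boundedness of the relevant $a$-derivatives described in the first paragraph. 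Joint weak convergence of the triple $(Z_n,Z_n',Z_n'')$, which is what Theorem~\ref{ratefctR} ultimately uses, then follows by applying the same Lindeberg--Feller computation to the stacked $\R^{3m}$-valued array and noting that each coordinate process is tight.

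The main obstacle is purely organisational: producing closed formulas for $\frac{d^k}{da^k}\vt^n_0(a,z_f)$ by successive implicit differentiation of Equation~\ref{theta0} and verifying that in each such expression every factor is bounded. The hypothesis $g_n''(q_n\vt^n_0(a,z_f))>C>0$ is what makes this work --- it keeps the inverse powers of $g_n''$ that appear in these formulas from exploding --- while the boundedness of $Z_j^\gamma$, $W_j$ and the uniform control of the conditional moment generating functions on $(0,\vt_{**})$ dispose of the numerators. Once this is in place, no ingredient beyond those already used for Lemmata~\ref{fddR} and~\ref{R} is required.
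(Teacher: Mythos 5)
Your overall architecture --- Lindeberg--Feller (Theorem~\ref{Lindeberg-Feller}) for the finite-dimensional distributions, a Chebyshev bound for the initial law, and the reduced Kolmogorov--Chentsov criterion \eqref{versiontightness} --- matches the paper's, and you correctly identify via the implicit function theorem applied to Equation~\eqref{theta0} that $\frac{d}{da}\vt^n_0(a,z_f)=(g_n''(q_n\vt^n_0(a,z_f)))^{-1}$, so that the hypothesis $g_n''>C>0$ is what makes tightness work. But you have misread what the primes on $Z_n$ denote. $Z_n$ is a function of the tilting variable $\vt$ (Equation~\eqref{Zn}); $Z_n'(q_n\vt^n_0(a,z_f))$ and $Z_n''(q_n\vt^n_0(a,z_f))$ are its first and second derivatives \emph{with respect to $\vt$}, evaluated at the moving point $q_n\vt^n_0(a,z_f)$ --- these are precisely the objects produced by Taylor expanding $\Psi_n^\RR$ in $\vt$ in the proof of Theorem~\ref{ratefct}. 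They are \emph{not} the total derivatives $\frac{d}{da}Z_n(q_n\vt^n_0(a,z_f))$ and $\frac{d^2}{da^2}Z_n(q_n\vt^n_0(a,z_f))$, which is how you read them throughout.

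This misidentification changes the triangular arrays. The paper's summands $(Y_{a,j}^n)'$, $(Y_{a,j}^n)''$ are the $\vt$-derivatives and carry no factor $(\vt^n_0)'(a)$ at all; they differ from your $\frac{d}{da}Y_{a,j}^n$ by the common factor $q_n(\vt^n_0)'(a,z_f)$, and from $\frac{d^2}{da^2}Y_{a,j}^n$ by an expression that also involves $(\vt^n_0)''$. With the correct summands, the $a$-derivative (and hence the hypothesis on $g_n''$) is invoked exactly once, namely when one differentiates $(Y_{a,j}^n)'$ and $(Y_{a,j}^n)''$ \emph{once} in $a$ to verify \eqref{versiontightness}; this yields the expressions \eqref{der1} and \eqref{second1}, and the finite-dimensional bound replacing \eqref{boundR} is the elementary $|(\chi_j^n)'|^2\leq \frac{4m}{n}(Z_1^{c,\max}W_1^{\max})^2$, $|(\chi_j^n)''|^2\leq \frac{4m}{n}(Z_1^{c,\max}W_1^{\max})^4$. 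Your version, as written, would (i) establish weak convergence of a rescaled family rather than the lemma's, leaving an unaddressed continuous-mapping step to remove the $q_n(\vt^n_0)'(a)$ factor; (ii) give the wrong limiting covariance (you assert it is the $a$-derivative of the kernel in Theorem~\ref{ratefctR}); and (iii) require you to control $\frac{d^2}{da^2}\vt^n_0$ and $\frac{d^3}{da^3}\vt^n_0$, i.e.\ $g_n'''$ and $g_n^{(4)}$, none of which the paper's argument touches. Replace your summands by the $\vt$-derivative summands defined explicitly in the paper's proof and the rest of your scaffold carries over verbatim.
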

\begin{proof}
In analogy to the previous proof we define 
\begin{equation}
 \left(Y_{a,j}^n\right)'= \frac{\E^{\RR}\left[q_nZ_j^cW_je^{ q_nX_{a,j}^n}\right]}{\E^{\RR}\left[e^{q_nX_{a,j}^n}\right]}-\E\left[\frac{\E^{\RR}\left[q_nZ_j^cW_je^{ q_nX_{a,j}^n}\right]}{\E^{\RR}\left[e^{q_nX_{a,j}^n}\right]}\right]
\end{equation}
 and 
\begin{align}  
\left(Y_{a,j}^n\right)'' =  &\frac{\E^{\RR}\left[(q_nZ_j^cW_j)^2e^{q_nX_{a,j}^n}\right]}{\E^{\RR}\left[e^{ q_nX_{a,j}^n}\right]}-\left(\frac{\E^{\RR}\left[q_nZ_j^cW_je^{q_nX_{a,j}^n}\right]}{\E^{\RR}\left[e^{q_nX_{a,j}^n}\right]}\right)^2 \nonumber \\
& - \E\left[\frac{\E^{\RR}\left[(q_nZ_j^cW_j)^2e^{q_nX_{a,j}^n}\right]}{\E^{\RR}\left[e^{ q_nX_{a,j}^n}\right]}-\left(\frac{\E^{\RR}\left[q_nZ_j^cW_je^{q_nX_{a,j}^n}\right]}{\E^{\RR}\left[e^{q_nX_{a,j}^n}\right]}\right)^2\right].
\end{align}
The constitutive parts of the processes under consideration are given by
\begin{equation}
Z_{n,c}'(q_n\vt^n_0(a,z_f))=\frac{1}{\sqrt n}\sum_{j=1}^{n_c} \left(Y_{a,j}^n\right)'
\quad
\text{ and }
\quad
Z_{n,c}''(q_n\vt^n_0(a,z_f))= \frac{1}{\sqrt n}\sum\limits_{j=1}^{n_c} \left(Y_{a,j}^n\right)''.
\end{equation}
With the notation $(\chi_j^n)'=\frac {1}{\sqrt n}((Y_{a_1,j}^n)',\dots,(Y_{a_m,j}^n)')$ and $(\chi_j^n)''=\frac {1}{\sqrt n}((Y_{a_1,j}^n)'',\dots,(Y_{a_m,j}^n)'')$
we obtain $|(\chi_j^n)'|^2\leq \frac{4m}{n}(Z_1^{c,\max}W_1^{\max})^2$ and $|(\chi_j^n)''|^2\leq \frac{4m}{n}(Z_1^{c,\max}W_1^{\max})^4$. Hence, the convergence of the finite dimensional distributions of $Z_{n,c}'$ and $Z_{n,c}''$ follows with the same argument as before.

Concerning the tightness of the initial distributions we get the two following bounds using again Chebychev's inequality: 
\begin{align}
&\P \left(\frac{1}{\sqrt n}\sum_{j=1}^{n_c}(Y_{\e,j}^n)'\geq K\right) \nonumber \\
\leq &  K^{-2}\max\left \lbrace\max_{i\in 1,\dots,n_0-1} \tfrac{n_c(i)}{i} \Va\left[(Y_{\e,j}^i)'\right],(C_1+\d)\Va\left[\frac{\E^{\RR}[Z_1^cW_1e^{(\vt^n_0(\e,z_f)+\wb{\d}) Z_1^cW_1}]}{\E^{\RR}[e^{(\vt_0(\e,z_f)+\wb{\d})Z_1^cW_1}]}\right]\right \rbrace
\end{align}
and
\begin{align}
& \P \left(\frac{1}{\sqrt n}\sum_{j=1}^{n_c}(Y_{\e,j}^n)''\geq K\right) \leq K^{-2}\max \left\lbrace \max_{i\in 1,\dots,n_0-1} \frac{n_c(i)}{i}\Va[(Y_{\e,j}^i)''],\right.\nonumber \\
&  \quad \left.(C_1+\d)\Va\left[\frac{\E^{\RR}[(Z_1^cW_1)^2e^{(\vt^n_0(\e,z_f)+\wb{\d}) Z_1^cW_1}]}{\E^{\RR}[e^{(\vt^n_0(\e,z_f)+\wb{\d}) Z_1^cW_1}]}-\left(\frac{\E^{\RR}[Z_1^cW_1e^{(\vt^n_0(\e,z_f)+\wb{\d}) Z_1^cW_1}]}{\E^{\RR}[e^{(\vt^n_0(\e,z_f)+\wb{\d}) Z_1^cW_1}]}\right)^2\right]\right\rbrace.
\end{align}
Using again Condition \ref{versiontightness} it is enough to bound 
\begin{equation}
\frac{d}{da}\frac{\E^{\RR}[q_nZ_j^cW_je^{q_nX_{a,j}^n}]}{\E^{\RR}[e^{q_nX_{a,j}^n}]}
\end{equation}
and
\begin{equation}
\frac{d}{da}\left(\frac{\E^{\RR}[(q_nZ_j^cW_j)^2e^{q_nX_{a,j}^n}]}{\E^{\RR}[e^{q_nX_{a,j}^n}]}-\left(\frac{\E^{\RR}[q_nZ_j^cW_je^{q_nX_{a,j}^n}]}{\E^{\RR}[e^{q_nX_{a,j}^n}]}\right)^2\right).
\end{equation}
These derivatives are given by
\begin{equation}\label{der1}
\frac{\E^{\RR}[(\vt^n_0)'(a,z_f)(q_nZ_j^cW_j)^2e^{q_nX_{a,j}^n}]}{\E^{\RR}[e^{q_nX_{a,j}^n}]}-\frac{\E^{\RR}[(\vt^n_0)'(a,z_f)q_nZ_j^cW_je^{q_nX_{a,j}^n}]\E^{\RR}[q_nZ_j^cW_je^{q_nX_{a,j}^n}]}{(\E^{\RR}[e^{q_nX_{a,j}^n}])^2}
\end{equation}
and
\begin{align}\label{second1}
&\frac{\E^{\RR}[(\vt^n_0)'(a,z_f) (q_nZ_j^cW_j)^3e^{q_nX_{a,j}^n}]}{\E^{\RR}[e^{q_nX_{a,j}^n}]}-\frac{\E^{\RR}[(q_nZ_j^cW_j)^2e^{q_nX_{a,j}^n}]\E^{\RR}[(\vt^n_0)'(a,z_f)q_nZ_j^cW_je^{q_nX_{a,j}^n}]}{(\E^{\RR}[e^{q_nX_{a,j}^n}])^2}\nonumber \\
& - 2\frac{\E^{\RR}[(q_nZ_j^cW_j)^2(\vt^n_0)'(a,z_f)e^{q_nX_{a,j}^n}]\E^{\RR}[q_nZ_j^cW_je^{q_nX_{a,j}^n}]}{(\E^{\RR}[e^{q_nX_{a,j}^n}])^2}\nonumber \\
& +2\frac{\E^{\RR}[(\vt^n_0)'(a,z_f)q_nZ_j^cW_je^{q_nX_{a,j}^n}](\E^{\RR}[q_nZ_j^cW_je^{q_nX_{a,j}^n}])^2}{(\E^{\RR}[e^{q_nX_{a,j}^n}])^3}.
\end{align}
We are able to bound the first derivative according to 
\begin{align}
&-\max_{a}(\vt^n_0)'(a,z_f)(Z_j^{c,\max}W_j^{\max})^2
\leq \ref{der1}
\leq \max_{a}(\vt^n_0)'(a,z_f)(Z_j^{c,\max}W_j^{\max})^2 .
\end{align}
Thus, we get the same criterion as in Lemma \ref{R} for boundedness and thus tightness here. We can bound each summand in \ref{second1} very similar to the previous cases and end up with the bound $4\max_{a}(\vt^n_0)'(a,z_f)(Z_j^{c,\max}W_j^{\max})^3$. Thus, we again arrive at the same criterion to get tightness.
\qed\end{proof}

\begin{lemma}
$X_a^n=(Z_n(q_n\vt^n_0(a,z_f)),Z_n'(q_n\vt^n_0(a,z_f)),Z_n''(q_n\vt^n_0(a,z_f)))$ converges weakly if there exists $C>0$ such that $g_n''(q_n\vt^n_0(a,z_f))>C>0$.
\end{lemma}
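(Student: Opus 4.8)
The plan is to obtain the joint weak convergence of $X_a^n$ from the coordinate-wise results of Lemma \ref{R} and the preceding lemma on the derivatives, supplemented by a single multivariate central limit theorem for a \emph{stacked} triangular array. As before I would treat the constitutive part alone, the variable part being independent of it and handled by the same argument, so it suffices to prove the statement for $X_{a,c}^n\equiv\bigl(Z_{n,c}(q_n\vt^n_0(a,z_f)),Z_{n,c}'(q_n\vt^n_0(a,z_f)),Z_{n,c}''(q_n\vt^n_0(a,z_f))\bigr)$ and then add the analogous variable-part vector back at the end (the sum of two independent Gaussian limits is Gaussian). For fixed levels $0<a_1<\dots<a_m<\infty$ I would form the $3m$-dimensional row-wise independent array
\[
\widehat\chi_j^n\equiv\bigl(\chi_j^n,(\chi_j^n)',(\chi_j^n)''\bigr),\qquad 1\le j\le n_c,
\]
assembled from the vectors already used in Lemma \ref{fddR} and in the derivative lemma; its row sum $\widehat\xi^n\equiv\sum_{j=1}^{n_c}\widehat\chi_j^n$ is exactly the finite-dimensional marginal of $X_{a,c}^n$ at the points $a_1,\dots,a_m$, so proving $\mathcal L(\widehat\xi^n)$ converges establishes Condition \ref{convfdd} for the joint process.

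Next I would check the hypotheses of Theorem \ref{Lindeberg-Feller} for $\widehat\chi_j^n$. Condition \ref{A} and the Lindeberg condition follow verbatim from the two preceding proofs: every block of $\widehat\chi_j^n$ is centred and bounded in modulus by a constant multiple of $n^{-1/2}$ (the constant depending on $m$, $Z_1^{c,\max}$, $W_1^{\max}$ and the uniform bound $K$ on the conditional moment generating functions over $(0,\vt_{**})$), hence $|\widehat\chi_j^n|\le c\sqrt{m/n}$ and all indicator terms in \eqref{Lind} vanish once $n$ is large, while $\sum_j\E[|\widehat\chi_j^n|^2]\le c^2 m$ stays finite. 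Condition $[\beta]$ is immediate since all components are centred. The one substantive point is Condition $[\gamma]$: one must identify the limit of the $3m\times 3m$ matrix $\sum_{j=1}^{n_c}\E[\widehat\chi_j^{n,k}\widehat\chi_j^{n,l}]$. Its three diagonal blocks are exactly the covariance matrices already computed in Lemma \ref{fddR} and its derivative analogue; the off-diagonal blocks are the mixed second moments $\tfrac{n_c}{n}\E[Y^n_{a_k,1}(Y^n_{a_l,1})']$, $\tfrac{n_c}{n}\E[Y^n_{a_k,1}(Y^n_{a_l,1})'']$ and $\tfrac{n_c}{n}\E[(Y^n_{a_k,1})'(Y^n_{a_l,1})'']$, which converge by dominated convergence because $\ln M_{c,1}^{\RR}$ and its first two $\vt$-derivatives are uniformly bounded on $(0,\vt_{**})$ (as in the proof of the Case 2.) theorem), $q_n\to1$, and $\vt^n_0(a_i,z_f)$ converges to $\vt_0(a_i)$; the last convergence is where the hypothesis $g_n''(q_n\vt^n_0(a,z_f))>C>0$ enters, through the implicit function theorem identity $\tfrac{d}{da}\vt^n_0(a,z_f)=(g_n''(q_n\vt^n_0(a,z_f)))^{-1}$ already used in Lemma \ref{R}. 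By part b) of Theorem \ref{Lindeberg-Feller} this gives convergence of the finite-dimensional distributions of $X_{a,c}^n$ to a centred Gaussian law with the matrix just identified, and by part a) the limit is automatically Gaussian.

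Finally, tightness of $X_{a,c}^n$ as a process in $a$ on the Wiener space reduces to tightness of its three coordinate processes, which was already established in Lemma \ref{R} and in the derivative lemma via the Kolmogorov--Chentsov criterion and the moment bound \eqref{versiontightness}; since a finite product of tight families is tight and the initial-distribution tightness (Condition \ref{starttight}) for the vector follows coordinate-wise from the Chebyshev estimates already displayed there, no new estimate is needed. Combining finite-dimensional convergence with tightness yields weak convergence of $X_{a,c}^n$, hence of $X_a^n=X_{a,c}^n+X_{a,v}^n$, to a Gaussian process, which proves the lemma. I expect the main obstacle to be purely bookkeeping inside Condition $[\gamma]$ — writing out the mixed second moments of $Y^n_{a,j}$, $(Y^n_{a,j})'$ and $(Y^n_{a,j})''$ and bounding them so that dominated convergence applies uniformly over the chosen levels $a_1,\dots,a_m$ — but this is of exactly the same nature as the single-process computations already carried through, merely with more terms.
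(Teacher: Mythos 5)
Your proposal is correct and takes essentially the same approach as the paper's: stack the centred increments $Y^n_{a,j},(Y^n_{a,j})',(Y^n_{a,j})''$ at the finitely many levels $a_1,\dots,a_m$ into a single row-wise independent triangular array, verify Condition~\eqref{A} and the Lindeberg condition via the uniform $O(n^{-1/2})$ modulus bound, and invoke Theorem~\ref{Lindeberg-Feller} for the joint finite-dimensional distributions, then deal with tightness. The only (immaterial) difference is in the tightness step: the paper verifies the Kolmogorov--Chentsov moment condition directly for the $\mathbb R^3$-valued process by noting that $\E[|X^n_{a+h}-X^n_a|^2]$ is the sum of the three coordinate increment second moments, each already bounded by $C|h|^2$, whereas you invoke the general fact that a finite product of tight families of $C$-valued laws is tight and that the coordinate processes were already shown tight. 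Both observations are correct and give the same conclusion; the paper's version is marginally more explicit about which Kolmogorov--Chentsov hypotheses are being re-used, but your route avoids even that bookkeeping and is a clean alternative.
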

\begin{proof}
The structure of the proof is the same. First, we consider the finite dimensional distributions. Let $Y_{a,j}^n, (Y_{a,j}^n)'$ and $(Y_{a,j}^n)''$ be  defined as above. 
We investigate now
\begin{equation}
 \chi_j^n\equiv \tfrac{1}{\sqrt n}(Y_{a_1,j}^n,(Y_{a_1,j}^n)',(Y_{a_1,j}^n)'',\dots,Y_{a_l,j}^n,(Y_{a_l,j}^n)',(Y_{a_l,j}^n)'').
 \end{equation}
These vectors are again independent for different $j$. The boundedness of $|\chi_j^n|^2$ follows directly by the boundedness in the previous cases. Again, $|\chi_j^n|$ tends to $0$ such that the Lindeberg condition is satisfied. Part 1 of Condition \ref{A} holds because we consider centered random variables. Part 2  holds because we can bound $|\chi_j^n|^2$.  
It can be shown that the initial distributions are tight using again  Chebychev's inequality. To prove tightness we have to show that
\begin{align}
	&\E\left[|X_{a+h}^n-X_a^n|^2\right]\nonumber \\
	= &\E\left[(Z_n(q_n\vt^n_0(a+h))-Z_n(q_n\vt^n_0(a,z_f)))^2+(Z_n'(q_n\vt^n_0(a+h))-Z_n'(q_n\vt^n_0(a,z_f)))^2\right. \nonumber \\
	&\left.
	 +(Z_n''(q_n\vt^n_0(a+h))-Z_n''(q_n\vt^n_0(a,z_f)))^2\right]\leq C|h|^2
\end{align}
This holds true because we have already seen that each summand can be bounded by the right-hand side for a certain C. Thus, we have just to sum up the different constants.
\qed\end{proof}
Finally, we come to the proof of the result about the rate function.
\begin{proof}[Proof of Theorem \ref{ratefct}]
We look at the $\vt_n(a,z_f)$ determining equation 
\begin{align}
a-\frac{z_f}{n}W_f &=q_ng_n'(q_n\vt) + \frac{1}{\sqrt n} q_nZ_n'(q_n\vt).  
\end{align}
and write the solution of this equation in the form $\vt^n_0(a,z_f)+\d^n(a,z_f)$, where $\vt^n_0(a,z_f)$  is defined as the solution of 
\begin{equation}
a-\frac{z_f}{n}W_f = q_ng_n'(q_n\vt).
\end{equation}
$\d^n(a,z_f)$ denotes the stochastic perturbation of this equation caused by the process $Z_n$.
By definition of $\vt^n_0(a,z_f)$, we have $q_ng_n'(q_n\vt^n_0(a,z_f))=a-z_f/nW_f$. We can derive an expression for $\d^n(a,z_f)$ using a first order Taylor expansion. To keep the notation short we drop the arguments and write just $\vt^n_0$ and $\d^n$. 
We obtain
\begin{align}
&a-\frac{z_f}{n}W_f= q_n\left[g_n'(q_n(\vt^n_0+\d^n)) + \frac{1}{\sqrt n }Z_n'(q_n(\vt^n_0+\d^n))\right]  \nonumber \\
\Leftrightarrow \quad&a-\frac{z_f}{n}W_f=q_n\left[g_n'(q_n\vt^n_0)+q_n\d^ng_n''(\vt^n_0) +\frac{1}{\sqrt n} Z_n'(q_n\vt^n_0) +q_n\d^n\frac{1}{\sqrt n}Z_n''(q_n\vt^n_0) + \po(\d^n)\right]\nonumber \\
\Leftrightarrow \quad&0=q_n^2\d^n\left(g_n''(q_n\vt^n_0)+\frac{1}{\sqrt n} Z_n''(q_n\vt^n_0)\right)+ \frac{q_n}{\sqrt n}Z_n'(q_n\vt^n_0) + \po(\d^n)\nonumber \\
\Leftrightarrow \quad &\d^n = \frac{-\frac{1}{\sqrt n}Z_n'(q_n\vt^n_0)+\po(\d^n)}{q_n(g_n''(q_n\vt^n_0)+\frac{1}{\sqrt n}Z_n''(q_n\vt^n_0))}= \frac{-\frac{1}{\sqrt n}Z_n'(q_n\vt^n_0)}{q_n(g_n''(q_n\vt^n_0)+\frac{1}{\sqrt n}Z_n''(q_n\vt^n_0))}+\po(\d^n).
\end{align}
The rate function can be rewritten as 

\begin{align}
I_n^{\RR}(a,z_f) &=a\vt_n(a,z_f)-\Psi_n^{\RR}(\vt_n(a,z_f)) \nonumber \\
&= \left(a-\frac{z_f}{n}W_f\right)(\vt^n_0+\d^n)-g_n(q_n(\vt^n_0+\d^n)) -\frac{1}{\sqrt n}Z_n(q_n(\vt^n_0+\d^n)).
\end{align}
A second order Taylor expansion and reordering of the involved terms yields
\begin{align}\label{I1}
I_n^{\RR}(a,z_f)=& \underbrace{a\vt^n_0-g_n(q_n\vt^n_0)}_{{=:}I^n_0(a,z_f)} -\frac{z_f}{n}W_f\vt_0^n-\frac{1}{\sqrt n}Z_n(q_n\vt^n_0)+\underbrace{\left(\left(a-\frac{z_f}{n}W_f\right)-q_ng_n'(q_n\vt^n_0)\right)}_{=0}\d^n
\nonumber \\
&\quad -\frac{1}{\sqrt n}q_n\d^n Z_n'(q_n\vt^n_0) -\frac 12 (q_n\d^n)^2\left(g_n''(q_n\vt^n_0)+\frac{1}{\sqrt n}Z_n''(q_n\vt^n_0)\right) + \po((q_n\d^n)^2)
\end{align}
The stochastic process $Z_n(q_n\vt^n_0(a,z_f))$ converges weakly to the mentioned Gaussian process according to Lemma \ref{R}.  $g_n''$ is of the order $\OO(1)$ according to 
\begin{align}
 g_n''(q_n\vt)&\to C_1\E\left[\frac{\E^{\RR}[(Z_1^cW_1)^2e^{\vt Z_1^cW_1})]\E^{\RR}[e^{\vt Z_1^cW_1}]-\E^{\RR}[Z_1^cW_1e^{\vt Z_1^cW_1}]^2}{(\E^{\RR}[e^{\vt Z_1^cW_1}])^2}\right] \nonumber \\
& \quad +  C_2\E\left[\frac{\E^{\RR}[(Z_1^vW_1)^2e^{\vt Z_1^vW_1}]\E^{\RR}[e^{\vt Z_1^vW_1}]-\E^{\RR}[Z_1^vW_1e^{\vt Z_1^vW_1}]^2}{(\E^{\RR}[e^{\vt Z_1^vW_1}])^2}\right]
\end{align}
Together with the joint weak convergence of the processes $Z_n'(q_n\vt^n_0(a,z_f))$ and $Z_n''(q_n\vt^n_0(a,z_f))$ this yields $\d^n\in\OO(1/\sqrt n)$.
Furthermore, this implies $f \in \po(1/\sqrt n)$ for each $f \in \po(\d^n)$  and $f\in \po(1/n)$ for each $f\in \po((\d^n)^2)$.  
We plug in Equation \ref{I1} the expression for $\d^n$  and obtain 
\begin{align}
I_n^{\RR}(a,z_f)&= I^n_0(a,z_f)-\frac{z_f}{n}W_f\vt_0^n -\frac{1}{\sqrt n}Z_n(q_n\vt^n_0) \nonumber \\
&\quad - \frac{q_n^2}{2}\left(\frac{-(\frac{1}{\sqrt n}Z_n'(q_n\vt^n_0))}{q_n(g_n''(q_n\vt^n_0)+\frac{1}{\sqrt n}Z_n''(q_n\vt^n_0))}+\po(\d^n)\right)^2\left(g_n''(q_n\vt^n_0)+\frac{1}{\sqrt n}Z_n''(q_n\vt^n_0)\right)  \nonumber \\
&\quad -\frac{q_n}{\sqrt n}Z_n'(q_n\vt^n_0)\left( \frac{-\frac{1}{\sqrt n}Z_n'(q_n\vt^n_0)}{q_n(g_n''(q_n\vt^n_0)+\frac{1}{\sqrt n}Z_n''(q_n\vt^n_0))}+\po(\d^n)\right) +\po((\d^n)^2)\nonumber \\
&= I^n_0(a,z_f)-\frac{z_f}{n}W_f\vt_0^n -\frac{1}{\sqrt n}Z_n(q_n\vt^n_0)
- \frac{q_n^2}{2}\left(g_n''(q_n\vt_0^n)+\frac{1}{\sqrt n}Z_n''(q_n\vt_0^n)\right)\nonumber\\
&\quad \times \left[ 
\frac{\frac 1n(Z_n'(q_n\vt_0^n))^2}{q_n^2\left(g_n''(q_n\vt_0^n)+\frac{1}{\sqrt n}Z_n''(q_n\vt_0^n)\right)^2}-\frac{\frac{1}{\sqrt n}Z_n'(q_n\vt_0^n)\po(\d^n)}{q_n\left(g_n''(q_n\vt_0^n)+\frac{1}{\sqrt n}Z_n''(q_n\vt_0^n)\right)}+\po((\d^n)^2)\right]\nonumber \\
&\quad +\frac{\frac{1}{n}(Z_n'(q_n\vt_0^n))^2}{g_n''(q_n\vt_0^n)+\frac{1}{\sqrt n}Z_n''(q_n\vt_0^n)}+ \frac{1}{\sqrt n}Z_n'(q_n\vt_0^n)\po(\d^n)+\po((\d^n)^2).
\end{align}
According to the observations concerning $\d^n$ this equals
\begin{align}
I^n_0(a,z_f)-\frac{z_f}{n}W_f\vt_0^n -\frac{1}{\sqrt n}Z_n(q_n\vt^n_0) + \frac{\frac{1}{2n}(Z_n'(q_n\vt_0^n))^2}{g_n''(q_n\vt_0^n)+\frac{1}{\sqrt n}Z_n''(q_n\vt_0^n)} +\po\left(\frac 1n\right),
\end{align}
where $\frac{(Z_n'(q_n\vt_0^n))^2}{g_n''(q_n\vt_0^n)+\frac{1}{\sqrt n}Z_n''(q_n\vt_0^n)}$ converges weakly due to continuous mapping and the joint weak convergence of $Z_n'(q_n\vt_0^n)$ and $Z_n''(q_n\vt_0^n)$.
This completes the proof of the theorem.
\qed\end{proof}

\paragraph{Case 3.)}
Conditioning  on $\ZZ$ produces again i.i.d. random variables because $Z_j$ are measurable w.r.t. $\ZZ$ and the stimulation rates are independent of this $\s$-algebra.
\begin{theorem}\label{VorBedZ}
 Let $(a_n)_{n\in\mathbb N}$ be defined by $a_n \equiv a$ and $g_{act}(n)=an$ such that $g_{act}(n) >\E^{\ZZ}(G_n(z_f))$ and $a < \sup_{\vt \in \mathbb R} \frac{d}{d\vt} \Psi_n^{\ZZ}(\vt)$  for all $n\in \mathbb N$. Then Theorem \ref{abwthm} is almost surely applicable provided $z_f/n\downarrow 0$, the distribution functions of the stimulation rates are neither lattice valued nor concentrated on one point and the moment generating functions $M_{c,j}^{\ZZ}(\vt)$, $M_{v,j}^{\ZZ}(\vt)$ and $M^{\ZZ}(\vt)$ as well as $M_c(\vt)$, $M_v(\vt)$ and $M(\vt)$ are finite for each $\vt\in\mathbb R$. The rate function is
 \begin{align}
 	I_n^{\ZZ}(a,z_f) =   &a\vt_n(a,z_f) -\frac 1n \left(\sum_{j=1}^{n_c}\ln M_{c,j}^{\ZZ}(q_n\vt_n(a,z_f)) \right.\nonumber \\
	&\left. + \sum_{j=n_c+1}^{n_c+n_v} \ln M_{v,j}^{\ZZ}(q_n\vt_n(a,z_f)) + \ln M(z_f\vt_n(a,z_f))\right). 
 \end{align}
\end{theorem}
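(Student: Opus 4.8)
The plan is to verify the three hypotheses of Theorem \ref{abwthm} for $S_n=G_n(z_f)$ under the conditional law $\P^{\ZZ}$, merging the two devices already in use: the law-of-large-numbers treatment of the random conditional moment generating functions from Case~2.), and the handling of the \emph{deterministic} foreign factor $M(z_f\vt)$ from Case~1.). The point of departure is that conditioning on $\ZZ$ freezes the copy numbers $Z_j^\gamma$ but leaves the stimulation rates $W_j$ random and independent of $\ZZ$: within each block $M_{\gamma,j}^{\ZZ}(\vt)=\E^{\ZZ}[e^{\vt Z_j^\gamma W_j}]$ is an i.i.d.\ family in $j$, whereas $M^{\ZZ}(\vt)=\E^{\ZZ}[e^{\vt z_f W_f}]=M(z_f\vt)$ is deterministic because $W_f$ is independent of $\ZZ$. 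Hence $\Phi_n^{\ZZ}(\vt)=\prod_{j=1}^{n_c}M_{c,j}^{\ZZ}(q_n\vt)\prod_{j=n_c+1}^{n_c+n_v}M_{v,j}^{\ZZ}(q_n\vt)\,M(z_f\vt)$, and its Fenchel--Legendre transform at $a$ is exactly the asserted $I_n^{\ZZ}(a,z_f)$.

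Assumption \ref{ass1} follows as before: by monotonicity of each logarithmic conditional moment generating function and boundedness of all random variables, $\Psi_n^{\ZZ}(\vt)$ is dominated for $\vt<x$ by a finite $\beta(x)$ built from $M_c(x)$, $M_v(x)$, $M(z_fx)$, uniformly in $n$ and almost surely. For condition (i) one lets $\vt_n(a,z_f)$ solve $a=\tfrac1n\sum_{j=1}^{n_c}\tfrac{d}{d\vt}\ln M_{c,j}^{\ZZ}(q_n\vt)+\tfrac1n\sum_{j=n_c+1}^{n_c+n_v}\tfrac{d}{d\vt}\ln M_{v,j}^{\ZZ}(q_n\vt)+\tfrac1n\tfrac{d}{d\vt}\ln M(z_f\vt)$; existence and uniqueness come from strict convexity of $\Psi_n^{\ZZ}$ and the choice of $a$. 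To obtain $\vt_n(a,z_f)\sqrt n\to\infty$ it suffices that $\vt_n(a,z_f)$ converge to a strictly positive constant, and this holds because the block summands have derivatives bounded by $q_nZ_1^{\gamma,\max}W_1^{\max}$ (hence lie in $L^1(\PP^{N_1N_2})$), so the strong law of large numbers applies to both block sums, the foreign term $\tfrac1n\tfrac{d}{d\vt}\ln M(z_f\vt)$ is $\OO(z_f/n)\to0$ since $W_1$ has bounded support and $z_f/n\downarrow0$, and $q_n\to1$, $n_c/n\to C_1$, $n_v/n\to C_2$; the limiting equation $a=C_1\tfrac{d}{d\vt}\E[\ln M_{c,1}^{\ZZ}(\vt)]+C_2\tfrac{d}{d\vt}\E[\ln M_{v,1}^{\ZZ}(\vt)]$ pins down the positive limit. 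Condition (ii) is identical in spirit: $\sigma_n^2=(\tfrac{d^2}{d\vt^2}\Psi_n^{\ZZ})(\vt_n(a,z_f))>0$ for each $n$ by strict convexity, the second derivatives of the summands are again uniformly bounded so limit and expectation interchange, and the limit $C_1\E[\tfrac{d^2}{d\vt^2}\ln M_{c,1}^{\ZZ}(\vt)]+C_2\E[\tfrac{d^2}{d\vt^2}\ln M_{v,1}^{\ZZ}(\vt)]$ is positive by strict convexity of the logarithmic moment generating functions.

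The step needing genuine care is condition (iii), and here the argument differs from Case~2.): the non-lattice, non-degenerate hypothesis now sits on the stimulation rates, so each conditioned variable $Z_j^\gamma W_j$ (with $Z_j^\gamma$ a frozen positive constant) has a non-lattice, non-degenerate law, as does its exponential tilt, while the copy numbers no longer cause trouble. I would write $|\Phi_n^{\ZZ}(\vt_n+it)/\Phi_n^{\ZZ}(\vt_n)|$ as the product of the moduli of the tilted characteristic functions of the $Z_j^\gamma W_j$ times the deterministic factor $|M(z_f(\vt_n+it))/M(z_f\vt_n)|\le1$, and bound it on a compact range of $t$'s, which is legitimate because $\vt_n(a,z_f)$ converges. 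The $j$-th modulus is the characteristic function of the $q_nZ_j^\gamma\vt_n$-tilt of $W$ evaluated at $q_nZ_j^\gamma t$; for the indices $j$ with $Z_j^\gamma\ge\delta_0$ --- a positive fraction of them, since $Z_1^\gamma>0$ almost surely --- this argument stays in a fixed compact set bounded away from $0$ (roughly $[\delta_0\delta_1/2,\,Z_1^{\gamma,\max}\delta_2(\vt_\infty+1)]$) on which that characteristic function is $\le1-\e$, uniformly over the bounded tilt parameter, by joint continuity and the non-lattice property. By the law of large numbers at least $\delta n$ of the factors are $\le1-\e$ for $n$ large, the rest are $\le1$, and the deterministic factor is $\le1$, so the whole product is bounded uniformly over the compact $t$-range by $(1-\e)^{\delta n}$, whence $\sqrt n$ times the supremum tends to $0$. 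With all three conditions verified almost surely, Theorem \ref{abwthm} applies with $S_n=G_n(z_f)$ and gives the stated approximation with rate function $I_n^{\ZZ}(a,z_f)$.

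The main obstacle, as signalled, is making the characteristic-function estimate in (iii) uniform in the block index $j$ even though the copy numbers $Z_j^\gamma$ may be arbitrarily close to $0$, which would push the corresponding tilted characteristic function towards $1$; the resolution is the positive-fraction argument above, which uses only $Z_1^\gamma>0$ a.s.\ together with the non-lattice, non-degenerate assumption on the stimulation rates, plus the convergence of $\vt_n(a,z_f)$ that confines every quantity involved to a compact set.
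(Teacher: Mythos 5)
The paper omits this proof, remarking only that it ``goes along the same lines as the analogous result in Case~2.) and will be skipped''; your argument is the natural elaboration of that remark and is correct. You mirror the Case~2.) verifications of Assumption~\ref{ass1} and conditions (i)--(ii) line by line, with the foreign factor handled as in Case~1.) because $M^{\ZZ}(\vt)=M(z_f\vt)$ is deterministic (and its $\frac{1}{n}$-scaled log-derivative is $\OO(z_f/n)\to 0$). The part that is not a verbatim transcription is condition~(iii), where the roles of the two randomness sources are swapped relative to Case~2.): conditioning on $\ZZ$ freezes the copy numbers $Z_j^\gamma$ (which could \emph{a priori} be small) and leaves the non-lattice stimulation rates $W_j$ random. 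You rightly recognize that this is where care is needed, and your positive-fraction device on the event $\lbrace Z_j^\gamma\ge\delta_0\rbrace$ --- equivalently, showing the Case~2.)-style probability $\P\bigl(|M_{c,1}^{\ZZ}(q_n(\vt+it))/M_{c,1}^{\ZZ}(q_n\vt)|\le 1-\e\bigr)$ is bounded away from zero --- combined with compactness of the $t$-range via the convergence of $\vt_n(a,z_f)$, is exactly what ``along the same lines'' tacitly requires. No gaps; this is the proof the paper declined to write out.
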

\begin{proof} The proof of this theorem goes along the same lines as the analogous result in Case 2.) and will be skipped.
\qed\end{proof}

\paragraph{Investigation of the rate function.}

In this case, the properties of the large deviation rate function can again be described  by a functional central limit theorem. Using the notation from Section \ref{results} we obtain the following result
\begin{theorem}\label{ratefct}
If there exists a constant $C$ such that $\wt{g}_n''(q_n\wt{\vt}^n(a,z_f))>C>0$, the rate function takes the form 
 \begin{align}
I_n^{\ZZ}(a,z_f) = &\wt{I}^n(a,z_f) -\frac{1}{\sqrt n}\wt{Z}_n(q_n\wt{\vt}^n(a,z_f))-\frac{1}{n}\ln M(z_f\wt{\vt}^n(a,z_f)) + R_n ,
\end{align}
where $R_n\in \OO\left(\frac1n\right)$.
 $\wt{Z}_n(q_n\wt{\vt}^n(a,z_f))$
converges weakly to the Gaussian process $Z_{a} + \overline{Z}_{a}$. $Z_{a}$ and $\overline{Z}_{a}$ are both Gaussian processes with expectation functions $\E[Z_{a}]=0=\E[\overline{Z}_{a}]$  and covariance functions
\begin{align}
\Cov (Z_{a},Z_{a '}) = C&\left( \E[\ln (\E^{\ZZ}[e^{\wt{\vt}_0(a) Z_1^cW_1}])\ln (\E^{\ZZ}[e^{\wt{\vt}_0(a ') Z_1^cW_1}])]\right.\nonumber \\
&\left.-\E[\ln(\E^{\ZZ}[e^{\wt{\vt}_0(a) Z_1^cW_1}])]\E[\ln(\E^{\ZZ}[e^{\wt{\vt}_0(a ') Z_1^cW_1}])]\right)
\end{align}
and
\begin{align}
\Cov (\overline{Z}_{a},\overline{Z}_{a '}) = \widetilde{C}&\left( \E[\ln (\E^{\ZZ}[e^{\wt{\vt}_0(a) Z_1^vW_1}])\ln( \E^{\ZZ}[e^{\wt{\vt}_0(a ') Z_1^vW_1}])]\right. \nonumber \\
&\left.-\E[\ln(\E^{\ZZ}[e^{\wt{\vt}_0(a) Z_1^vW_1}])]\E[\ln(\E^{\ZZ}[e^{\wt{\vt}_0(a ') Z_1^vW_1}])]\right).
\end{align}
\end{theorem}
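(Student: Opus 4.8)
The plan is to reproduce, with $\ZZ$ in place of $\RR$, the argument that established Theorem~\ref{ratefctR} for Case~2, keeping track of the one genuinely new ingredient: the foreign-peptide contribution now enters $\Psi_n^\ZZ$ through the \emph{nonlinear} term $\tfrac1n\ln M(z_f\vt)$ rather than through the affine, $\RR$-measurable term $\tfrac{z_f}{n}W_f\vt$ of Case~2. Write $L_n(\vt)\equiv\tfrac1n\ln M(z_f\vt)$ for this contribution, so that
\[
\Psi_n^\ZZ(\vt)=\wt g_n(q_n\vt)+\tfrac1{\sqrt n}\wt Z_n(q_n\vt)+L_n(\vt),
\]
with $\wt g_n$ and $\wt Z_n$ as in Equations~\ref{thetatilde}--\ref{wtZn}. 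Because $Z_j^c,Z_j^v$ are $\ZZ$-measurable while the stimulation rates are independent of $\ZZ$, the summands $\ln M_{\gamma,j}^\ZZ(\vt)=\ln\E^\ZZ[e^{\vt Z_j^\gamma W_j}]$ are, in each block, i.i.d.\ and uniformly bounded (all $Z_j^\gamma$ and $W_j$ are bounded, so $\vt Z_j^\gamma W_j$ stays in a fixed interval for $\vt\in(0,\vt_{**})$). Hence $\wt g_n(q_n\vt)\to\wt g(\vt)$ by the strong law of large numbers and, by the dominated-convergence argument of Case~2, so do its first two derivatives; strict convexity of the log-MGFs makes $\wt g_n(q_n\cdot)+L_n$ strictly convex, so Equation~\ref{thetatilde} has a unique solution $\wt\vt^n(a,z_f)$, which (since $z_f/n\downarrow0$, so $L_n$ and $L_n'$ vanish) converges to $\wt\vt_0(a)$, the unique solution of $a=\tfrac{d}{d\vt}\wt g(\vt)$.

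First I would reprove the three weak-convergence lemmas of Case~2, with $\ZZ$ in place of $\RR$ and $\wt\vt^n(a,z_f)$ in place of $\vt^n_0(a,z_f)$: that $\wt Z_n(q_n\wt\vt^n(a,z_f))$, $\wt Z_n'(q_n\wt\vt^n(a,z_f))$ and $\wt Z_n''(q_n\wt\vt^n(a,z_f))$ converge weakly, and jointly, as processes in $a$ on the Wiener space. The proofs carry over verbatim: finite-dimensional convergence follows from the Lindeberg--Feller theorem for triangular arrays (Theorem~\ref{Lindeberg-Feller}), with Condition~\ref{A} and the Lindeberg condition trivial since each $|\chi_j^n|^2$ is bounded by a constant times $1/n$; tightness follows from the Kolmogorov--Chentsov criterion \cite{KAL}, which reduces to boundedness of $\tfrac{d}{da}\wt\vt^n(a,z_f)=\big(q_n^2\wt g_n''(q_n\wt\vt^n(a,z_f))+L_n''(\wt\vt^n(a,z_f))\big)^{-1}$ (by the implicit function theorem applied to Equation~\ref{thetatilde}), which holds because $L_n''\geq0$ and, by hypothesis, $\wt g_n''(q_n\wt\vt^n(a,z_f))>C>0$; tightness of the initial distributions is again Chebyshev's inequality, as in the proof of Lemma~\ref{R}. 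Passing $n\to\infty$ in Condition~$[\gamma]$ of Theorem~\ref{Lindeberg-Feller}, using dominated convergence together with $\tfrac{n_c}{n}\to C_1$, $\tfrac{n_v}{n}\to C_2$, $q_n\to1$ and $\wt\vt^n(a,z_f)\to\wt\vt_0(a)$, identifies the limiting covariances; the two blocks are independent, so the limit splits as $Z_a+\overline Z_a$ with exactly the covariance functions stated.

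Next I would expand the rate function. Let $\vt_n(a,z_f)$ solve $\tfrac{d}{d\vt}\Psi_n^\ZZ(\vt)=a$, the equation of Theorem~\ref{VorBedZ}, and write $\vt_n(a,z_f)=\wt\vt^n(a,z_f)+\d^n$. Expanding $\tfrac{d}{d\vt}\Psi_n^\ZZ$ about $\wt\vt^n(a,z_f)$ and using $q_n\wt g_n'(q_n\wt\vt^n)+L_n'(\wt\vt^n)=a$ (the defining equation of $\wt\vt^n$) gives
\[
\d^n=\frac{-\tfrac{q_n}{\sqrt n}\wt Z_n'(q_n\wt\vt^n)}{q_n^2\wt g_n''(q_n\wt\vt^n)+L_n''(\wt\vt^n)+\tfrac{q_n^2}{\sqrt n}\wt Z_n''(q_n\wt\vt^n)}+\po(\d^n)\in\OO(1/\sqrt n),
\]
the denominator being bounded below by a positive constant ($L_n''\geq0$, $\wt g_n''>C$) and $\wt Z_n',\wt Z_n''$ being $\OO(1)$ by the previous step. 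Substituting $\vt_n=\wt\vt^n+\d^n$ into $I_n^\ZZ(a,z_f)=a\vt_n-\Psi_n^\ZZ(\vt_n)$ and expanding to second order in $\d^n$, the first-order-in-$\d^n$ terms collapse to $-\tfrac{q_n}{\sqrt n}\wt Z_n'(q_n\wt\vt^n)\d^n$, because the coefficient $a-q_n\wt g_n'(q_n\wt\vt^n)-L_n'(\wt\vt^n)$ vanishes; collecting this term, the quadratic-in-$\d^n$ terms and the higher-order Taylor remainders into $R_n$, one is left with
\[
I_n^\ZZ(a,z_f)=\underbrace{a\wt\vt^n(a,z_f)-\wt g_n(q_n\wt\vt^n(a,z_f))}_{=\wt I^n(a,z_f)}-\tfrac1{\sqrt n}\wt Z_n(q_n\wt\vt^n(a,z_f))-L_n(\wt\vt^n(a,z_f))+R_n,
\]
which is the claimed identity since $L_n(\wt\vt^n)=\tfrac1n\ln M(z_f\wt\vt^n)$; continuous mapping applied to the joint limit of $(\wt Z_n',\wt Z_n'')$ then shows $R_n\in\OO(1/n)$.

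The main obstacle — the single place where the analysis departs from Case~2 — is controlling the nonlinear term $L_n$. In Case~2, $\tfrac{z_f}{n}W_f\vt$ is affine, so its second derivative vanishes: it contributes neither to the denominator of $\d^n$ nor to $R_n$. Here $L_n''(\vt)=\tfrac{z_f^2}{n}(\ln M)''(z_f\vt)\geq0$, and in the relevant regime $\sqrt n\ll z_f\ll n$ it can be of order as large as $z_f^2/n\gg1$; nevertheless, both the first-order term $L_n'(\wt\vt^n)\d^n$ (which is absorbed into the bracket that vanishes, thanks to having folded $L_n'$ into the definition of $\wt\vt^n$) and the second-order term $L_n''(\wt\vt^n)(\d^n)^2$ turn out to be negligible: the latter because $L_n''(\wt\vt^n)$ appears \emph{squared} in the denominator of $\d^n$, so that $L_n''(\wt\vt^n)(\d^n)^2$ is of order $\tfrac1n\cdot L_n''(\wt\vt^n)/(C+L_n''(\wt\vt^n))^2$, which is $\OO(1/n)$ uniformly in $z_f$ because $x\mapsto x/(C+x)^2$ is bounded on $[0,\infty)$. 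The boundedness of all derivatives of $x\mapsto\ln M(x)$ (the law of $W_1$ having bounded support) is what makes these estimates go through. With this bookkeeping in place, every remaining estimate is that of Case~2, and the theorem follows.
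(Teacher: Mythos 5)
Your proposal is correct and follows precisely the template the paper prescribes for this statement --- the paper explicitly says ``The approach to prove this result is the same as in Case 2.) and we will not present the details,'' and you have reproduced the Case-2 machinery (decomposition $\Psi_n^\ZZ=\wt g_n(q_n\cdot)+n^{-1/2}\wt Z_n(q_n\cdot)+L_n$, the three weak-convergence lemmas via Lindeberg--Feller and Kolmogorov--Chentsov, the implicit-function estimate for $\frac{d}{da}\wt\vt^n$, and the second-order Taylor expansion of $I_n^\ZZ$ about $\wt\vt^n$) with $\ZZ$ in place of $\RR$. The one place you go beyond a mechanical substitution --- noting that the foreign-peptide contribution $L_n(\vt)=\tfrac1n\ln M(z_f\vt)$ is now nonlinear, so that $L_n''\geq0$ enters both the denominator of $\d^n$ and the quadratic remainder, and controlling $L_n''(\d^n)^2$ uniformly in $z_f$ via the boundedness of $x\mapsto x/(C+x)^2$ on $[0,\infty)$ --- is exactly the detail the paper glosses over, and your estimate is sound; it correctly exploits that $L_n''$ appears squared in the denominator of $\d^n$ while only linearly in the remainder.
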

The approach to prove this result is the same as in Case 2.)  and we will 
not present the details.

\section{Conclusion and Outlook}

The main new aspects of the present work are the investigation of the conditional scenarios and the establishment of a higher robustness of the model using classes of distributions instead of concrete distributions.

The first point allows a more precise understanding and interpretation of the activation mechanism. The presented results show that the parameter $a$ can be chosen such that the activation probabilities increase exponentially with $z_f$ for the regime $\sqrt n \ll z_f \ll n$ in Cases 1.) and 3.). For Case 2.) this result depends on the actual value of the stimulation rate of the foreign peptide, $W_f$. This value has to be large enough in order that the probability of activation increases exponentially with $z_f$. In biological terms this means that the over all frequency of T-Cell activation increases as desired. This growth is caused by those T-Cell types which interact strongly with the given foreign peptide type. These findings are on the one hand very suitably captured by the title of recent work of van den Berg et al. \cite{BMS} "Specific T-cell activation in an unspecific T-cell repertoire" and on the other hand justify this formulation. This interpretation also suits to the histograms of stimulation rates and the explanation of these histograms in \cite{BL}.

The generalized distribution assumptions allow the choice of different distributions depending on the mechanisms that should be included into the model.  For example MHC-loading fluctuations, influence of different affinity of the different peptide types to the same receptor and maybe co-stimulation could be considered. 
For the relevance of co-stimulation in the immune response see, e.g. the recent  review by Chen
\cite{chen2013molecular}.

One aim of future work is to investigate the mechanism of \emph{negative selection}. Thereby T-Cells which interact too strongly with the body's own structures are deleted. Van den Berg et al. investigated this mechanism in a different model setting in \cite{BM} and also Zint et al. \cite{BZH} included negative selection into numerical simulations. In this case the situation becomes mathematically more involved, since the selection causes dependencies in between the stimulation rates as was already pointed out in \cite{HU}.
\nocite{Ja}
\nocite{JI}

\begin{acknowledgement} We are very grateful to Ellen Baake and Frank den 
Hollander for numerous discussions and valuable input. A.B. is partially supported through the German Research Foundation in the Priority Programme 1590 ``Probabilistic Structures in Evolution'' 
 and
the Hausdorff Center for Mathematics (HCM).  He is member of the Cluster of Excellence ``ImmunoSensation'' at Bonn University.
H.M. is supported by the German Research Foundation in the Bonn International Graduate School in Mathematics (BIGS). 
\end{acknowledgement}

\bibliographystyle{abbrv}
\bibliography{mybib}

\begin{thebibliography}{10}

\bibitem{ACB}
T.~Arstila, A.~Casrouge, V.~Baron, J.~Even, J.~Kanellopoulos, and P.~Kourilsky.
\newblock A direct estimate of the human $\alpha$$\beta$ t cell receptor
  diversity.
\newblock {\em Science}, 286(5441):958--961, 1999.

\bibitem{BR}
R.~R. Bahadur and R.~Ranga~Rao.
\newblock On deviations of the sample mean.
\newblock {\em Ann. Math. Statist.}, 31:1015--1027, 1960.

\bibitem{CS}
N.~R. Chaganty and J.~Sethuraman.
\newblock Strong large deviation and local limit theorems.
\newblock {\em Ann. Probab.}, 21(3):1671--1690, 1993.

\bibitem{chen2013molecular}
L.~Chen and D.~B. Flies.
\newblock Molecular mechanisms of t cell co-stimulation and co-inhibition.
\newblock {\em Nature Reviews Immunology}, 2013.

\bibitem{DZ}
A.~Dembo and O.~Zeitouni.
\newblock {\em Large deviations techniques and applications}, volume~38 of {\em
  Stochastic Modelling and Applied Probability}.
\newblock Springer-Verlag, Berlin, 2010.
\newblock Corrected reprint of the second (1998) edition.

\bibitem{FdH}
F.~den Hollander.
\newblock {\em Large deviations}, volume~14 of {\em Fields Institute
  Monographs}.
\newblock American Mathematical Society, Providence, RI, 2000.

\bibitem{JS}
J.~Jacod and A.~N. Shiryaev.
\newblock {\em Limit theorems for stochastic processes}, volume 288 of {\em
  Grundlehren der Mathematischen Wissenschaften [Fundamental Principles of
  Mathematical Sciences]}.
\newblock Springer-Verlag, Berlin, 1987.

\bibitem{Ja}
C.~Janeway, P.~Travers, M.~Walport, and J.~Capra.
\newblock {\em Immunobiology: the immune system in health and disease},
  volume~1.
\newblock Current Biology, 2001.

\bibitem{KAL}
O.~Kallenberg.
\newblock {\em Foundations of modern probability}.
\newblock Probability and its Applications (New York). Springer-Verlag, New
  York, second edition, 2002.

\bibitem{melanomas2012}
J.~Landsberg, J.~Kohlmeyer, M.~Renn, T.~Bald, M.~Rogava, M.~Cron, M.~Fatho,
  V.~Lennerz, T.~W{\"o}lfel, M.~H{\"o}lzel, et~al.
\newblock Melanomas resist t-cell therapy through inflammation-induced
  reversible dedifferentiation.
\newblock {\em Nature}, 490(7420):412--416, 2012.

\bibitem{BL}
F.~Lipsmeier and E.~Baake.
\newblock Rare event simulation for {T}-cell activation.
\newblock {\em J. Stat. Phys.}, 134(3):537--566, 2009.

\bibitem{MA}
D.~Mason.
\newblock A very high level of crossreactivity is an essential feature of the
  t-cell receptor.
\newblock {\em Immunology Today}, 19(9):395 -- 404, 1998.

\bibitem{JI}
K.~Murphy, P.~Travers, M.~Walport, et~al.
\newblock {\em Janeway's immunobiology}.
\newblock Taylor \& Francis, 2011.

\bibitem{R}
E.~Rothenberg.
\newblock How t cells count.
\newblock {\em Science (New York, NY)}, 273(5271):78, 1996.

\bibitem{HU}
H.~Uebis.
\newblock {\em Stochastische Modellierung der Fremdk\"orpererkennung in
  T-Zellen}.
\newblock Diploma thesis, Rheinische Friedrich-Wilhelms-Universit\"at, 2011.

\bibitem{VL2}
S.~Valitutti and A.~Lanzavecchia.
\newblock Serial triggering of tcrs: a basis for the sensitivity and
  specificity of antigen recognition.
\newblock {\em Immunology today}, 18(6):299--304, 1997.

\bibitem{VMC}
S.~Valitutti, S.~M{\"u}ller, M.~Cella, E.~Padovan, and A.~Lanzavecchia.
\newblock Serial triggering of many t-cell receptors by a few peptide mhc
  complexes.
\newblock {\em Nature}, 375(6527):148--151, 1995.

\bibitem{BM}
H.~van~den Berg and C.~Molina-Par{\'\i}s.
\newblock Thymic presentation of autoantigens and the efficiency of negative
  selection.
\newblock {\em Journal of Theoretical Medicine}, 5(1):1--22, 2003.

\bibitem{VR}
H.~van~den Berg and D.~Rand.
\newblock Foreignness as a matter of degree: the relative immunogenicity of
  peptide/mhc ligands.
\newblock {\em Journal of theoretical biology}, 231(4):535--548, 2004.

\bibitem{BRB}
H.~van~den Berg, D.~Rand, and N.~Burroughs.
\newblock A reliable and safe t cell repertoire based on low-affinity t cell
  receptors.
\newblock {\em Journal of Theoretical Biology}, 209(4):465 -- 486, 2001.

\bibitem{BMS}
H.~A. Van Den~Berg, C.~Molina-Paris, and A.~K. Sewell.
\newblock Specific t-cell activation in an unspecific t-cell repertoire.
\newblock {\em Science progress}, 94(3):245--264, 2011.

\bibitem{VL}
A.~Viola and A.~Lanzavecchia.
\newblock T cell activation determined by t cell receptor number and tunable
  thresholds.
\newblock {\em Science (New York, NY)}, 273(5271):104, 1996.

\bibitem{BZH}
N.~Zint, E.~Baake, and F.~den Hollander.
\newblock How {T}-cells use large deviations to recognize foreign antigens.
\newblock {\em J. Math. Biol.}, 57(6):841--861, 2008.

\end{thebibliography}

\end{document}